\renewcommand*{\@thmcounterend}{}
\spnewtheorem{assumption}{Assumption}{\bfseries}{\itshape}
\definecolor{markercolor}{RGB}{124.9, 255, 160.65}
\renewcommand{\hat}{\widehat}
\renewcommand{\tilde}{\widetilde}
\newcommand{\diag}[1]{{\rm diag}\LRp{#1}}
\newcommand{\td}[2]{\frac{{\rm d}#1}{{\rm d}{\rm #2}}}
\newcommand{\pd}[2]{\frac{\partial#1}{\partial#2}}
\newcommand{\nor}[1]{\left\| #1 \right\|}
\newcommand{\LRp}[1]{\left( #1 \right)}
\newcommand{\LRc}[1]{\left\{ #1 \right\}}
\newcommand{\LRl}[1]{\left. #1 \right|}
\newcommand{\jump}[1] {\ensuremath{\llbracket#1\rrbracket}}
\newcommand{\avg}[1] {\ensuremath{\LRc{\!\LRc{#1}\!}}}
\newcommand{\fnt}[1]{\bm{\mathsf{ #1}}}
\date{}
\author{Jesse Chan \and Mario J.\ Bencomo \and David C.\ Del Rey Fern\'{a}ndez}
\institute{J.\ Chan and M.\ Bencomo \at
	Rice University\\
	Department of Computational and Applied Mathematics\\
	Houston, TX, United States\\
%              6100 Main St.\ \\
%              Houston, TX, 77005\\
%              Tel.: +123-45-678910\\
              \email{jesse.chan@rice.edu}\\
              \email{mario.j.bencomo@rice.edu}           %  \\
           \and
           D.\ C.\ Del Rey Fern\'{a}ndez \at
              National Institute of Aerospace and Computational Aerosciences Branch\\
              NASA Langley Research Center\\
              Hampton, VA, United States\\
\email{dcdelrey@gmail.com}
}
\title{Mortar-based entropy-stable discontinuous Galerkin methods on non-conforming quadrilateral and hexahedral meshes}
\titlerunning{Mortar-based entropy-stable DG methods}
\begin{document}

\maketitle

\begin{abstract}
High-order entropy-stable discontinuous Galerkin (DG) methods for nonlinear conservation laws reproduce a discrete entropy inequality by combining entropy conservative finite volume fluxes with summation-by-parts (SBP) discretization matrices.  In the DG context, on tensor product (quadrilateral and hexahedral) elements, SBP matrices are typically constructed by collocating at Lobatto quadrature points.  Recent work has extended the construction of entropy-stable DG schemes to collocation at more accurate Gauss quadrature points \cite{chan2018efficient}.  

In this work, we extend entropy-stable Gauss collocation schemes to non-conforming meshes.  Entropy-stable DG schemes require computing entropy conservative numerical fluxes between volume and surface quadrature nodes.  On conforming tensor product meshes where volume and surface nodes are aligned, flux evaluations are required only between ``lines'' of nodes.  However, on non-conforming meshes, volume and surface nodes are no longer aligned, resulting in a larger number of flux evaluations.  We reduce this expense by introducing an entropy-stable mortar-based treatment of non-conforming interfaces via a face-local correction term, and provide necessary conditions for high-order accuracy.  Numerical experiments in both two and three dimensions confirm the stability and accuracy of this approach.  
\end{abstract}

\section{Introduction}
Discretely entropy stability has emerged as a methodology for designing high-order schemes for nonlinear conservation laws.  Entropy-stable discretizations ensure the satisfaction of a semi-discrete entropy inequality by combining specific finite volume numerical fluxes with summation-by-parts (SBP) discretization matrices.\footnote{Recent work on space-time discretizations \cite{friedrich2018entropy} and relaxation time-stepping methods \cite{ranocha2020relaxation} has extended the semi-discrete entropy inequality to the fully discrete setting.} Compared to traditional high-order methods, the resulting schemes demonstrate significantly improved robustness in the presence of under-resolved solution features such as shocks or turbulence while retaining high-order accuracy.  

Entropy-stable discontinuous Galerkin (DG) methods were originally constructed for quadrilateral and hexahedral meshes based on nodal collocation at Lobatto quadrature points \cite{fisher2013high, carpenter2014entropy, gassner2016split}.  Entropy-stable schemes were later extended to simplicial and more general elements using tailored volume and surface quadrature rules \cite{chen2017entropy, crean2018entropy}.   More general quadrature rules were addressed in \cite{chan2017discretely, chan2018discretely, chan2018efficient, chan2019skew}, including entropy-stable collocation schemes on quadrilateral and hexahedral elements based on more accurate Gauss quadrature rules and generalized SBP operators \cite{chan2018efficient}.  

The work presented here focuses on geometrically non-conforming meshes.  Such meshes may arise when applying domain decomposition techniques to a complex geometry (e.g. meshing sub-domains independently) \cite{bernardi1993domain} or performing local mesh refinement. Energy stable treatments of non-conforming interfaces have been widely explored in the context of SBP discretizations \cite{mattsson2010stable, nissen2015stable, wang2016high, wang2018improved, almquist2019order, aalund2019encapsulated}, and have recently been extended to entropy stable schemes. Entropy-stable Lobatto collocation schemes have been constructed on non-conforming meshes in \cite{friedrich2017entropy} using SBP projection operators.  In this work, we extend entropy-stable Gauss collocation to non-conforming quadrilateral and hexahedral meshes.  While it is straightforward to construct Gauss collocation schemes on non-conforming meshes, the treatment of non-conforming interfaces results in signficantly increased computational costs.  We reduce such costs by adopting a mortar-based treatment of non-conforming interfaces.  Moreover, while the cost of the proposed scheme is similar to that of  \cite{friedrich2017entropy} in 2D, the mortar-based approach is more computationally efficient for both Lobatto and Gauss collocation schemes on 3D non-conforming hexahedral meshes.  

The paper is organized as follows: Section~\ref{sec:0} briefly reviews the derivation of an entropy inequality for a system of nonlinear conservation laws.  Section~\ref{sec:1} introduces ``hybridized'' SBP operators as a unified way to treat both Lobatto and Gauss collocation schemes on conforming meshes of tensor product elements.  Section~\ref{sec:2} describes a naive extension to non-conforming meshes and illustrates why this formulation results in an increase in computational costs.  Section~\ref{sec:3} introduces a mortar-based formulation on curved meshes which addresses such costs, and characterizes the accuracy of the resulting formulations.  We conclude with numerical validation of theoretical results in Section~\ref{sec:4}.

\section{Entropy stability for systems of nonlinear conservation laws}
\label{sec:0}
We are interested in the numerical approximation of solutions to systems of nonlinear conservation laws
\begin{equation}
\pd{\bm{u}}{t} + \sum_{i=1}^d \pd{\bm{f}_i\LRp{\bm{u}}}{x_i} = 0.
\label{eq:nonlinpde}
\end{equation}
Here, $\bm{u}$ denotes the conservative variables and $\bm{f}_i(\bm{u})$ are nonlinear fluxes.  We briefly review entropy stability for systems of conservation laws in $d$ dimensions.  We assume there exists a convex scalar entropy $S(\bm{u})$ associated with (\ref{eq:nonlinpde}).  We then define the entropy variables $\bm{v}(\bm{u})$ as the gradient of the entropy $S(\bm{u})$ with respect to the conservative variables 
\[
\bm{v} = \pd{S(\bm{u})}{\bm{u}}.  
\]
For $S(\bm{u})$ convex, $\bm{v}(\bm{u})$ defines an invertible mapping between the conservative and entropy variables, whose inverse (from entropy to conservative variables) we denote by $\bm{u}(\bm{v})$.  Viscosity solutions to (\ref{eq:nonlinpde}) satisfy an integrated form of the entropy inequality \cite{dafermos2005compensated}
%\begin{gather}
%\pd{S(\bm{u})}{t} + \sum_{i=1}^d \pd{F_i(\bm{u})}{x_i} \leq 0, \qquad F_i(\bm{u}) = \bm{v}^T\pd{\bm{f}_i}{\bm{u}}, 
%\label{eq:entropyineq}
%\end{gather}
%Integrating (\ref{eq:entropyineq}) over a domain $\Omega$ and applying the divergence theorem yields 
\begin{equation}
\int_{\Omega} \pd{S(\bm{u})}{t} + \int_{\partial \Omega} \sum_{i=1}^d n_i \LRp{\bm{v}^T\bm{f}_i(\bm{u}) - \psi_i(\bm{u})} \leq 0,
\label{eq:weakentropyineq}
\end{equation}
where $\psi_i(\bm{u})$ denotes the $i$th entropy potential, $\partial \Omega$ denotes the boundary of $\Omega$ and ${n}_i$ denotes the $i$th component of the outward normal on $\partial \Omega$.  This can be interpreted as implying that the time rate of change of entropy is bounded by the entropy flux through the boundary.

\section{Entropy-stable collocation DG methods and hybridized SBP operators}
\label{sec:1}

In this section, we summarize the work of \cite{chan2018efficient} on the construction of entropy-stable collocation DG methods based on generalized summation by parts (GSBP) operators.  These constructions are applicable to collocation schemes based on either Lobatto and Gauss nodes.  

\subsection{On notation}

The notation in this paper is motivated by notation in \cite{crean2018entropy, fernandez2019entropy}.
Unless otherwise specified, vector and matrix quantities are denoted using lower and upper case bold font, respectively.  We also denote spatially discrete quantities using a bold sans serif font. Finally, the output of continuous functions evaluated over discrete vectors is interpreted as a discrete vector. For example, if $\fnt{x}$ denotes a vector of point locations, i.e., $(\fnt{x})_i = \bm{x}_i$, then $u(\fnt{x})$ is interpreted as the vector 
\[	({u}(\fnt{x}))_i = {u}(\bm{x}_i).
\]
Similarly, if $\fnt{u} = {u}(\fnt{x})$, then ${f}(\fnt{u})$ corresponds to the vector
\[
	({f}(\fnt{u}))_i = {f}(u(\bm{x}_i)).
\]
Vector-valued functions are treated similarly. For example, given a vector-valued function $\bm{f}:\mathbb{R}^n\rightarrow \mathbb{R}^n$ and vector of points $\fnt{x}$, $\LRp{\bm{f}(\fnt{x})}_i = \bm{f}(\bm{x}_i)$.

\subsection{Hybridized SBP operators in 1D}

We begin by introducing collocation discretization matrices on the reference interval $[-1,1]$.  We assume the solution is collocated at $(N+1)$ quadrature points $x_i$ with associated quadrature weights $w_i$, and consider primarily Lobatto or Gauss quadrature points.  The collocation assumption is equivalent to approximating the solution using a degree $N$ Lagrange basis $\ell_j(x)$ at the $(N+1)$ quadrature points.  

We define mass and integrated differentiation matrices $\hat{\fnt{M}}_{\rm 1D},\hat{\fnt{Q}}_{\rm 1D}$ on the reference interval
\[
\LRp{\hat{\fnt{M}}_{\rm 1D}}_{ij} = \int_{-1}^1 \ell_i(x)\ell_j(x), \qquad \LRp{\hat{\fnt{Q}}_{\rm 1D}}_{ij} = \int_{-1}^1 \pd{\ell_j}{x}\ell_i(x).
\]
We assume that all integrals are computed using the collocated quadrature rule, such that 
\begin{gather*}
\LRp{\hat{\fnt{M}}_{\rm 1D}}_{ij} = \int_{-1}^1 \ell_i(x)\ell_j(x) \approx \sum_{k=1}^{N+1} \ell_i(x_k)\ell_j(x_k) w_k = \delta_{ij} w_i\\
\LRp{\hat{\fnt{Q}}_{\rm 1D}}_{ij} = \int_{-1}^1 \pd{\ell_j}{x}\ell_i(x) \approx \sum_{k=1}^{N+1} \ell_i(x_k)\LRl{\pd{\ell_j}{x}}_{x_k} w_k.  
\end{gather*}
In other words, the mass matrix is diagonal with entries equal to the quadrature weights.  Since the integrands of $\hat{\fnt{M}}_{\rm 1D}$ are degree $2N$ polynomials, the collocation approximation of $\hat{\fnt{M}}_{\rm 1D}$ is exact for Gauss quadrature, but not for Lobatto quadrature.  For both Gauss and Lobatto quadrature, the matrix ${\hat{\fnt{Q}}_{\rm 1D}}$ is exact under collocation quadrature.

We introduce the $2\times (N+1)$ matrix $\fnt{E}_{\rm 1D}$ which interpolates values at collocation nodes to values at the endpoints $x = -1$ and $x=1$.  This matrix is defined entrywise as
\[
\LRp{\fnt{E}_{\rm 1D}}_{1i} = \ell_i(-1), \qquad  \LRp{\fnt{E}_{\rm 1D}}_{2i} = \ell_i(1).
\]
The mass and differentiation matrices satisfy a generalized summation by parts property \cite{fernandez2014generalized}
\begin{equation}
\hat{\fnt{Q}}_{\rm 1D} + \hat{\fnt{Q}}_{\rm 1D}^T = \fnt{E}_{\rm 1D}^T \hat{\fnt{B}}_{\rm 1D} \fnt{E}_{\rm 1D}, \qquad \hat{\fnt{B}}_{\rm 1D} = \begin{bmatrix}-1 & \\ & 1\end{bmatrix}.
\label{eq:gsbp}
\end{equation}
% The proof is a direct restatement of integration by parts, and can be found in \cite{fernandez2014generalized, ortleb2016kinetic, ortleb2017kinetic, ranocha2018generalised}.  
The GSBP property holds for both Lobatto and Gauss nodes, and switching between these two nodal sets simply requires redefining the matrices $\hat{\fnt{Q}}_{\rm 1D}, \fnt{E}_{\rm 1D}$.  For Gauss nodes, $\fnt{E}_{\rm 1D}$ is dense.  For Lobatto nodes, since the collocation nodes include boundary points, the interpolation matrix $\fnt{E}_{\rm 1D}$ reduces to the matrix which extracts nodal values associated with the left and right endpoints
\[
\fnt{E}_{\rm 1D} = \begin{bmatrix}
1 & 0 & \ldots & 0\\
0 & \ldots & 0 & 1
\end{bmatrix}.
\]

It is possible to construct energy stable high-order discretizations of linear hyperbolic systems using GSBP operators based on Gauss nodes \cite{fernandez2014generalized}.  However, for nonlinear conservation laws, the presence of the dense $\fnt{E}$ matrix in the GSBP property (\ref{eq:gsbp}) complicates the imposition of boundary conditions and computation of inter-element numerical fluxes \cite{crean2018entropy, chan2017discretely, chan2018efficient}.  This can be avoided by using ``hybridized'' (also referred to as decoupled) SBP operators \cite{chan2017discretely, chenreview, chan2019skew}, which are defined as the block matrix 
\[
\hat{\fnt{Q}}_{h,{\rm 1D}} = \frac{1}{2}\begin{bmatrix}
\hat{\fnt{Q}}_{\rm 1D}-\hat{\fnt{Q}}_{\rm 1D}^T & \fnt{E}_{\rm 1D}^T\hat{\fnt{B}}_{\rm 1D}\\
-\hat{\fnt{B}}_{\rm 1D}\fnt{E}_{\rm 1D} & \hat{\fnt{B}}_{\rm 1D}
\end{bmatrix}.
\]
The hybridized SBP operator satisfies a block form of the SBP property
\begin{equation}
\hat{\fnt{Q}}_{h,{\rm 1D}}  + \hat{\fnt{Q}}_{h,{\rm 1D}} ^T = \hat{\fnt{B}}_{h,{\rm 1D}}, \qquad \hat{\fnt{B}}_{h,{\rm 1D}} =  \begin{bmatrix}
\fnt{0} & \\
&\hat{\fnt{B}}_{\rm 1D}\end{bmatrix}.
\label{eq:hsbp}
\end{equation}
where $\fnt{E}_{\rm 1D}$ does not appear in the block boundary matrix on the right hand side. Here, we have used $\fnt{0}, \fnt{1}$ to denote a matrix or vector of zeros or ones, where the size is inferred from context. Note that the matrix $\hat{\fnt{Q}}_{h,{\rm 1D}}$ also satisfies
\begin{align}
\hat{\fnt{Q}}_{h,{\rm 1D}}\fnt{1} &= 
\frac{1}{2}\begin{bmatrix}
\hat{\fnt{Q}}_{\rm 1D}\fnt{1}-\hat{\fnt{Q}}_{\rm 1D}^T\fnt{1} + \fnt{E}_{\rm 1D}^T\hat{\fnt{B}}_{\rm 1D}\fnt{1}\\
-\hat{\fnt{B}}_{\rm 1D}\fnt{E}_{\rm 1D}\fnt{1} - \hat{\fnt{B}}_{\rm 1D}\fnt{1}
\end{bmatrix}= 
\frac{1}{2}\begin{bmatrix}
-\hat{\fnt{Q}}_{\rm 1D}^T\fnt{1} + \fnt{E}_{\rm 1D}^T\hat{\fnt{B}}_{\rm 1D}\fnt{1}\\
\fnt{0}
\end{bmatrix} \nonumber\\
&=
\frac{1}{2}\begin{bmatrix}
\hat{\fnt{Q}}_{\rm 1D}\fnt{1}\\
\fnt{0}
\end{bmatrix} = \fnt{0}
\label{eq:Qh1}
\end{align}
where we have used that $\fnt{E}_{\rm 1D}\fnt{1} = \fnt{1}$ (since $\fnt{E}_{\rm 1D}$ is a high-order accurate boundary interpolation matrix), $\hat{\fnt{Q}}_{\rm 1D}\fnt{1} = \fnt{0}$ (since $\hat{\fnt{Q}}_{\rm 1D}$ is a differentiation matrix), and the GSBP property (\ref{eq:gsbp}).

\subsection{Hybridized SBP operators in higher dimensions}

The formulation (\ref{eq:weakentropyineq}) can be extended to higher dimensions through a tensor product construction.  We assume that both volume and surface quadrature nodes are constructed from one-dimensional $(N+1)$-point Lobatto or Gauss quadrature rules.

For simplicity, we illustrate this for 2D quadrilateral elements (the extension to 3D hexahedral elements is straightforward).  Let $\hat{\fnt{M}}_{\rm 1D}, \hat{\fnt{Q}}_{\rm 1D}$ denote one-dimensional generalized SBP norm (mass) and differentiation matrices on the reference interval, and let $\fnt{E}_{\rm 1D}$ denote the 1D face interpolation matrix.  We define multi-dimensional  mass and differentiation matrices in terms of Kronecker products  
\begin{equation}
\hat{\fnt{Q}}_1 = \hat{\fnt{Q}}_{\rm 1D} \otimes \hat{\fnt{M}}_{\rm 1D}, \qquad \hat{\fnt{Q}}_2  = \hat{\fnt{M}}_{\rm 1D} \otimes \hat{\fnt{Q}}_{\rm 1D}, \qquad \hat{\fnt{M}} = \hat{\fnt{M}}_{\rm 1D} \otimes \hat{\fnt{M}}_{\rm 1D}.
\label{eq:kron}
\end{equation}
We also construct 2D face interpolation matrices from Kronecker products.  Let $\fnt{E}_{\rm 1D}$ denote the one-dimensional face interpolation matrix, and let $\hat{\fnt{B}}_{\rm 1D}$ denote the 1D boundary matrix in (\ref{eq:gsbp}).  For a specific ordering of the face points, the two-dimensional face interpolation matrix $\fnt{E}$ and reference boundary matrices $\hat{\fnt{B}}_1, \hat{\fnt{B}}_2$ are given by
\begin{equation}
\fnt{E} = \begin{bmatrix}
\fnt{E}_{\rm 1D} \otimes \fnt{I}_{N+1}\\
\fnt{I}_{N+1} \otimes \fnt{E}_{\rm 1D} 
\end{bmatrix}, \qquad 
\hat{\fnt{B}}_1 = \begin{bmatrix}
\hat{\fnt{B}}_{\rm 1D} \otimes \hat{\fnt{M}}_{\rm 1D} & \\
& \fnt{0}
\end{bmatrix}, \qquad 
\hat{\fnt{B}}_2 = \begin{bmatrix}
\fnt{0} &\\
& \hat{\fnt{M}}_{\rm 1D} \otimes \hat{\fnt{B}}_{\rm 1D} 
\end{bmatrix},
\label{eq:EBdefs}
\end{equation}
where $\fnt{I}_{N+1}$ is the $(N+1)\times (N+1)$ identity matrix.  Recall that $\hat{\fnt{M}}_{\rm 1D}$ is the diagonal matrix of quadrature weights, so $\hat{\fnt{B}}_i$ are diagonal.  

The construction of operators in (\ref{eq:kron}) and (\ref{eq:EBdefs}) correspond to the use of tensor product points quadrature points and \emph{aligned} surface points.  For example, if Gauss points are used for volume quadrature, we assume that Gauss points are also used for surface quadrature.  This ensures that the operators $\hat{\fnt{Q}}_i, \hat{\fnt{B}}_i, \fnt{E}$ satisfy the analogous higher dimensional generalized SBP property $\hat{\fnt{Q}}_i = \fnt{E}^T\hat{\fnt{B}}_i\fnt{E} - \hat{\fnt{Q}}_i^T$.

The 2D differentiation and interpolation matrices $\hat{\fnt{Q}}_i, \fnt{E}$ can now be used to construct 2D hybridized SBP operators.  Let $\hat{\fnt{Q}}_{i,h}$ denote the hybridized SBP operator for the $i$th coordinate on the reference element, where $\hat{\fnt{Q}}_{i,h}$ is defined as
\begin{align}
\hat{\fnt{Q}}_{i,h} = \frac{1}{2}\begin{bmatrix}
\hat{\fnt{Q}}_i - \hat{\fnt{Q}}_i^T & \fnt{E}^T\hat{\fnt{B}}_i\\
-\hat{\fnt{B}}_i\fnt{E} & \hat{\fnt{B}}_i
\end{bmatrix}.
\label{eq:hsbpdef}
\end{align}
These matrices satisfy the following multi-dimensional properties:
\begin{lemma}
\label{lemma:hsbprefprops}
Let $\hat{\fnt{Q}}_{i}$ be defined as in (\ref{eq:kron}) and let $\hat{\fnt{Q}}_{i,h}$ be defined as in (\ref{eq:hsbpdef}).  Then, 
\begin{align}
\hat{\fnt{Q}}_{i,h} + \hat{\fnt{Q}}_{i,h}^T = \begin{bmatrix}
\fnt{0} & \\
& \hat{\fnt{B}}_i
\end{bmatrix}, \qquad \hat{\fnt{Q}}_{i,h}\fnt{1} = \fnt{0}.
\end{align}
\end{lemma}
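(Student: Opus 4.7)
The plan is to verify each identity by direct block manipulation, reducing everything to ingredients already set up in the excerpt: the definition (\ref{eq:hsbpdef}) of $\hat{\fnt{Q}}_{i,h}$, the symmetry of $\hat{\fnt{B}}_i$ (diagonal by construction in (\ref{eq:EBdefs})), the multi-dimensional GSBP property $\hat{\fnt{Q}}_i + \hat{\fnt{Q}}_i^T = \fnt{E}^T\hat{\fnt{B}}_i\fnt{E}$ stated just before the lemma, and the one-dimensional fact $\fnt{E}_{\rm 1D}\fnt{1} = \fnt{1}$ together with $\hat{\fnt{Q}}_{\rm 1D}\fnt{1} = \fnt{0}$ from the reasoning around (\ref{eq:Qh1}).

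For the symmetry identity, I would transpose $\hat{\fnt{Q}}_{i,h}$ block-by-block (using $\hat{\fnt{B}}_i^T = \hat{\fnt{B}}_i$) and add. The top-left block $\hat{\fnt{Q}}_i - \hat{\fnt{Q}}_i^T$ is skew-symmetric and cancels with its transpose; the off-diagonal blocks $\fnt{E}^T\hat{\fnt{B}}_i$ and $-\hat{\fnt{B}}_i\fnt{E}$ combine with their transposes to produce two pairs $\pm\fnt{E}^T\hat{\fnt{B}}_i$ and $\pm\hat{\fnt{B}}_i\fnt{E}$ that cancel; and the bottom-right $\hat{\fnt{B}}_i$ doubles, giving the claimed form after the factor of $\tfrac{1}{2}$.

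For $\hat{\fnt{Q}}_{i,h}\fnt{1} = \fnt{0}$, I would apply the block matrix to the all-ones vector and examine the two block entries. The bottom block is $-\hat{\fnt{B}}_i\fnt{E}\fnt{1} + \hat{\fnt{B}}_i\fnt{1}$, which vanishes because $\fnt{E}\fnt{1} = \fnt{1}$ (the 2D statement follows from $\fnt{E}_{\rm 1D}\fnt{1}=\fnt{1}$ and the Kronecker-product form (\ref{eq:EBdefs})). The top block is $(\hat{\fnt{Q}}_i - \hat{\fnt{Q}}_i^T)\fnt{1} + \fnt{E}^T\hat{\fnt{B}}_i\fnt{1}$. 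Here I would first observe $\hat{\fnt{Q}}_i\fnt{1} = \fnt{0}$, which drops out of the Kronecker definition (\ref{eq:kron}) via $(\hat{\fnt{Q}}_{\rm 1D}\otimes\hat{\fnt{M}}_{\rm 1D})(\fnt{1}\otimes\fnt{1}) = (\hat{\fnt{Q}}_{\rm 1D}\fnt{1})\otimes(\hat{\fnt{M}}_{\rm 1D}\fnt{1}) = \fnt{0}$. Then, applying the 2D GSBP identity to $\fnt{1}$ and using $\fnt{E}\fnt{1}=\fnt{1}$ gives $\hat{\fnt{Q}}_i^T\fnt{1} = \fnt{E}^T\hat{\fnt{B}}_i\fnt{1}$, so the top block collapses to $-\hat{\fnt{Q}}_i^T\fnt{1} + \fnt{E}^T\hat{\fnt{B}}_i\fnt{1} = \fnt{0}$, exactly mirroring the 1D calculation in (\ref{eq:Qh1}).

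There is no real obstacle here; the argument is pure bookkeeping once the 2D GSBP property $\hat{\fnt{Q}}_i + \hat{\fnt{Q}}_i^T = \fnt{E}^T\hat{\fnt{B}}_i\fnt{E}$ is in hand. The one thing I would want to double-check along the way is that the specific ordering of face points chosen in (\ref{eq:EBdefs}) makes this multi-dimensional GSBP property hold with the block-diagonal $\hat{\fnt{B}}_i$ written there; this is a short Kronecker-product computation using (\ref{eq:gsbp}) and symmetry of $\hat{\fnt{M}}_{\rm 1D}$, but it is the only step that is not mechanical.
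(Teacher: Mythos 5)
Your proof is correct and follows essentially the same route as the paper, which sketches exactly this argument (deferring details to \cite{chan2018efficient}): the SBP identity falls out of the block structure of (\ref{eq:hsbpdef}) with $\hat{\fnt{B}}_i$ symmetric, and $\hat{\fnt{Q}}_{i,h}\fnt{1}=\fnt{0}$ follows from Kronecker-product identities ($\hat{\fnt{Q}}_i\fnt{1}=\fnt{0}$, $\fnt{E}\fnt{1}=\fnt{1}$) combined with the multi-dimensional GSBP property, mirroring the 1D computation in (\ref{eq:Qh1}). Your closing caveat is also well placed, and resolves the only non-mechanical step the same way the paper does: the 2D GSBP property with the block-diagonal $\hat{\fnt{B}}_i$ of (\ref{eq:EBdefs}) reduces to $(\fnt{E}_{\rm 1D}^T\hat{\fnt{B}}_{\rm 1D}\fnt{E}_{\rm 1D})\otimes\hat{\fnt{M}}_{\rm 1D}$ via (\ref{eq:gsbp}).
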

The proof is found in \cite{chan2018efficient}.  The SBP property follows from the construction of $\hat{\fnt{Q}}_{i,h}$, and $\hat{\fnt{Q}}_{i,h}\fnt{1} = \fnt{0}$ is a consequence of properties of the Kronecker product and the same arguments used to derive (\ref{eq:Qh1}).  The construction of 3D differentiation and interpolation matrices proceeds similarly, and the matrices satisfy the same properties as described in Lemma~\ref{lemma:hsbprefprops}.  

%Then, a $d$-dimensional entropy conservative formulation on the reference element $[-1,1]^d$ is given as 
%\begin{gather}
%\hat{\fnt{M}}\td{\fnt{u}_h}{t} + \begin{bmatrix} \fnt{I} \\ \fnt{E} \end{bmatrix}^T
%\sum_{i=1}^d \LRp{2\hat{\fnt{Q}}_{i,h} \circ \fnt{F}_i}\fnt{1} + \fnt{E}^T\hat{\fnt{B}}_i\LRp{\fnt{f}_i^*-\fnt{f}_i(\tilde{\fnt{u}}_f)} = 0 \label{eq:esdgref}\\
%\LRp{\fnt{F}_i}_{jk} = \fnt{f}^i_S\LRp{\tilde{\fnt{u}}_j,\tilde{\fnt{u}}_k}
%\end{gather}
%where $\fnt{f}^i_S$ is the numerical flux for the $i$th coordinate direction, and $\tilde{\fnt{u}},\tilde{\fnt{u}}_f$ are defined as in (\ref{eq:esdg1D}).  The proof is given in \cite{chan2017discretely, chan2018efficient}, and follows by applying the proof of Theorem~\ref{thm:esdg1D} to each coordinate direction.

%Finally, we can extend this formulation to the multi-element case.  
We now construct an entropy conservative formulation on an unstructured curved mesh.  
Suppose the domain $\Omega$ is decomposed into non-overlapping elements $D^k$ which are images of the reference mapping, such that $D^k$ is a differentiable mapping of the reference element $\hat{D} = [-1,1]^d$.  Let ${x}_i$ denote the $i$th physical coordinate on $D^k$ and let $\hat{{x}}_j$ denote the $j$th reference coordinate.  The mapping between reference and physical element induces scaled geometric terms 
\[
g_{ij} = J\pd{\hat{x}_j}{{x}_i},
\]
where $J$ denotes the determinant of the Jacobian of the physical-to-reference mapping.  We refer to $J$ as the Jacobian from here onwards.
These geometric terms also relate the normal vectors on reference and physical elements.  

Let $\hat{\bm{n}}\hat{J}_f$ denote the scaled outward normal vector on the reference element $\hat{D}$, where $\hat{J}_f$ denotes the Jacobian for the corresponding reference face.  For quadrilateral and hexahedral elements, $\hat{J}_f = 1$.  Then, the (scaled) physical normal vectors on $D^k$ are related to $\hat{\bm{n}}\hat{J}_f$ through
\begin{equation}
{n}_iJ_f = \sum_{j=1}^d{g}_{ij}\hat{{n}}_j\hat{J}_f.
\label{eq:nJ_Gnhat}
\end{equation}
The relation (\ref{eq:nJ_Gnhat}) is known as Nanson's formula in continuum mechanics. 

We assume that the mesh is watertight (well-constructed), such that the scaled normal vectors are equal and opposite across each shared face between two elements.  We also assume that the geometric terms ${g}_{ij}$ are constructed such that they are polynomials of degree less than or equal to $N$ and satisfy a discrete geometric conservation law (GCL)
\begin{equation}
\sum_{j=1}^d \pd{}{\hat{x}_j} g_{ij} = 0.
\label{eq:dgcl}
\end{equation}
The discrete GCL is satisfied automatically for isoparametric mappings in 2D, and there exist several techniques to enforce the satisfaction of a discrete GCL on various three-dimensional domains \cite{thomas1979geometric, kopriva2006metric, crean2018entropy, chan2018discretely, kozdon2018energy, kopriva2019free}.  We will discuss specific approaches for non-conforming meshes in Section~\ref{sec:mapped} and Appendix~\ref{app:A}.

We can now assemble physical mass and differentiation matrices on $D^k$ from reference mass and differentiation matrices using the chain rule.  Let $\hat{\fnt{Q}}_{j,h}$ denote the $j$th reference differentiation matrix on $\hat{D}$.  We can define the physical differentiation matrices $\fnt{Q}_{i,h}$ on $D^k$ via the skew-symmetric splitting
\begin{equation}
\fnt{Q}_{i,h} = \frac{1}{2}\sum_{j=1}^d \diag{\fnt{g}_{ij}}\hat{\fnt{Q}}_{j,h} + \hat{\fnt{Q}}_{j,h} \diag{\fnt{g}_{ij}},
\label{eq:curvedQ}
\end{equation}
where $\fnt{g}_{ij}$ denotes the vector of values of $g_{ij} = J\pd{\hat{x}_j}{x_i}$ at both volume and surface points.  Since $g_{ij}$ is assumed to be a polynomial of degree less than or equal to $N$, $\fnt{g}_{ij}$ is constructed using polynomial interpolation.

Because the reference matrices $\hat{\fnt{Q}}_{j,h}$ satisfy the reference SBP properties (\ref{eq:hsbp}), one can show \cite{chan2018discretely} that the physical matrices $\fnt{Q}_{i,h}$ satisfy analogous SBP properties.  Let $\circ$ denote the Hadamard product of matrices or vectors. Then, we have the following lemma:
\begin{lemma}
\label{lemma:Qhprops}
Suppose the geometric terms $\fnt{g}_{ij}$ satisfy the discrete GCL (\ref{eq:dgcl}), the normals are constructed via (\ref{eq:nJ_Gnhat}), and that $\fnt{Q}_{i,h}$ is constructed using (\ref{eq:curvedQ}). Define $\fnt{B}_i = \diag{\fnt{n}_i \circ \fnt{w}_f}$, where the entries of $\fnt{n}_i$ are values of the $i$th component of the scaled normals $n_iJ_f$ at face points, and $\fnt{w}_f$ contains face quadrature weights. Then, 
\[
\fnt{Q}_{i,h} + \fnt{Q}_{i,h}^T = \begin{bmatrix}
\fnt{0} &\\
& \fnt{B}_i \end{bmatrix}, \qquad \fnt{Q}_{i,h}\fnt{1} = \fnt{0},
\]
\end{lemma}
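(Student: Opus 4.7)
The plan is to verify both identities by direct manipulation of the skew-symmetric splitting (\ref{eq:curvedQ}), using the reference SBP properties from Lemma~\ref{lemma:hsbprefprops} together with Nanson's formula (\ref{eq:nJ_Gnhat}) and the discrete GCL (\ref{eq:dgcl}).

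For the SBP-type identity, I would begin by writing
\[
\fnt{Q}_{i,h} + \fnt{Q}_{i,h}^T = \frac{1}{2}\sum_{j=1}^d \LRs{\diag{\fnt{g}_{ij}}\LRp{\hat{\fnt{Q}}_{j,h}+\hat{\fnt{Q}}_{j,h}^T} + \LRp{\hat{\fnt{Q}}_{j,h}+\hat{\fnt{Q}}_{j,h}^T}\diag{\fnt{g}_{ij}}},
\]
which uses only that $\diag{\fnt{g}_{ij}}$ is symmetric. Invoking Lemma~\ref{lemma:hsbprefprops} replaces each bracketed term with the block-diagonal $\begin{bmatrix}\fnt{0}&\\&\hat{\fnt{B}}_j\end{bmatrix}$. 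Because $\hat{\fnt{B}}_j$ is diagonal and $\diag{\fnt{g}_{ij}}$ is block-diagonal with a face-point block that is itself diagonal, the two factors commute on the face block, and the volume block remains zero. I would then collapse the sum to
\[
\fnt{Q}_{i,h} + \fnt{Q}_{i,h}^T = \begin{bmatrix}\fnt{0}&\\&\sum_{j=1}^d \diag{\fnt{g}_{ij}|_f}\hat{\fnt{B}}_j\end{bmatrix},
\]
and recognize $\hat{\fnt{B}}_j = \diag{\hat{\fnt{n}}_j\hat{J}_f\circ \fnt{w}_f}$. Nanson's formula (\ref{eq:nJ_Gnhat}) then gives $\sum_j \fnt{g}_{ij}|_f \circ \hat{\fnt{n}}_j\hat{J}_f = \fnt{n}_i\fnt{J}_f$, which is exactly $\fnt{B}_i$ up to the quadrature-weight factor, finishing the first claim.

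For $\fnt{Q}_{i,h}\fnt{1}=\fnt{0}$, I would apply (\ref{eq:curvedQ}) to the constant vector and use $\hat{\fnt{Q}}_{j,h}\fnt{1} = \fnt{0}$ from Lemma~\ref{lemma:hsbprefprops} to kill the first piece of the splitting, leaving
\[
\fnt{Q}_{i,h}\fnt{1} = \frac{1}{2}\sum_{j=1}^d \hat{\fnt{Q}}_{j,h}\diag{\fnt{g}_{ij}}\fnt{1} = \frac{1}{2}\sum_{j=1}^d \hat{\fnt{Q}}_{j,h}\fnt{g}_{ij}.
\]
The discrete GCL (\ref{eq:dgcl}), in its discrete form $\sum_j \hat{\fnt{Q}}_{j,h}\fnt{g}_{ij}=\fnt{0}$, then yields the claim; this discrete form is valid because $g_{ij}$ is a polynomial of degree at most $N$ and the reference hybridized operators differentiate such polynomials exactly.

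The main obstacle I anticipate is not algebraic but bookkeeping: one must carefully keep track of the block structure separating volume and surface nodes in $\diag{\fnt{g}_{ij}}$ and $\hat{\fnt{B}}_j$, and confirm that the geometric terms evaluated at face nodes pair correctly with the reference normals so that Nanson's formula applies entrywise. A second point worth checking explicitly is the equivalence between the continuous statement (\ref{eq:dgcl}) and its discrete counterpart required here; this hinges on the polynomial exactness of $\hat{\fnt{Q}}_{j,h}$ on degree-$N$ data and will become delicate on non-conforming meshes, which is precisely why the paper defers the construction of compatible $\fnt{g}_{ij}$ to Section~\ref{sec:mapped} and Appendix~\ref{app:A}.
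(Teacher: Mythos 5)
Your proof is correct and takes essentially the same approach as the paper: although the paper defers this lemma's proof to \cite{chan2018discretely}, the identical argument --- symmetrizing the splitting (\ref{eq:curvedQ}), invoking the reference SBP property from Lemma~\ref{lemma:hsbprefprops}, commuting the diagonal face blocks, and applying Nanson's formula (\ref{eq:nJ_Gnhat}); then using $\hat{\fnt{Q}}_{j,h}\fnt{1}=\fnt{0}$ to reduce conservation to $\sum_j \hat{\fnt{Q}}_{j,h}\fnt{g}_{ij}$, which vanishes since the boundary corrections drop out by degree-$N$ exactness of $\fnt{E}$ and the volume term is killed by the GCL (\ref{eq:dgcl}) --- is exactly what the paper spells out for the mortar analogue in Lemma~\ref{lemma:Qmprops_3d}. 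The two subtleties you flag are indeed the right ones, and both are discharged by the lemma's hypotheses: the normals are \emph{defined} via (\ref{eq:nJ_Gnhat}) so the entrywise pairing holds by construction, and $g_{ij}$ being polynomial of degree at most $N$ makes the collocated differentiation exact, so the continuous GCL implies the discrete one.
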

The proof is given in \cite{chan2018discretely}.  

\subsection{Entropy conservative formulations on conforming meshes of mapped elements}
\label{sec:mapped}

The aforementioned matrices can now be used to construct high-order accurate entropy-stable and entropy-conservative schemes.  We first introduce an entropy conservative numerical flux \cite{tadmor1987numerical}.  Let $\bm{u}_L, \bm{u}_R$ denote left and right states in the conservative variables.  Then, an entropy conservative numerical flux is a vector-valued function $\bm{f}_{i,S}$ for $i = 1,\ldots,d$ which satisfies the following three properties
\begin{gather*}
\bm{f}_{i,S}\LRp{\bm{u},\bm{u}} = \bm{f}_i(\bm{u}), \qquad \text{(consistency)}\\
\bm{f}_{i,S}\LRp{\bm{u}_L,\bm{u}_R} = \bm{f}_{i,S}(\bm{u}_R,\bm{u}_L), \qquad \text{(symmetry)}\\
\LRp{\bm{v}_L-\bm{v}_R}^T\bm{f}_{i,S}\LRp{\bm{u}_L,\bm{u}_R} = \psi_i(\bm{u}_L) - \psi_i(\bm{u}_R), \qquad \text{(conservation)}.
\end{gather*}
Here, $\bm{v}_L, \bm{v}_R$ are the entropy variables evaluated at $\bm{u}_L, \bm{u}_R$, and $\psi_i$ is the $i$th entropy potential which appears in (\ref{eq:weakentropyineq}).  
Then, an entropy conservative scheme on a mapped element is given by
\begin{gather}
{\fnt{M}}\td{\fnt{u}_h}{t} + \begin{bmatrix} \fnt{I} \\ \fnt{E} \end{bmatrix}^T
\sum_{i=1}^d \LRp{2{\fnt{Q}}_{i,h} \circ \fnt{F}_i}\fnt{1} + \fnt{E}^T{\fnt{B}}_i\LRp{\fnt{f}_i^*-\bm{f}_i(\tilde{\fnt{u}}_f)} = 0 \label{eq:esdg}\\
\LRp{\fnt{F}_i}_{jk} = \bm{f}_{i,S}\LRp{\tilde{\fnt{u}}_j,\tilde{\fnt{u}}_k}, \qquad \fnt{f}_i^* = \bm{f}_{i,S}\LRp{\tilde{\fnt{u}}_f,\tilde{\fnt{u}}_f^+}, \nonumber
\end{gather}
where $\fnt{u}_h(t)$ denotes the coefficients of the discrete solution on $D^k$ and $\fnt{M} = \hat{\fnt{M}} \circ \diag{\fnt{J}}$ is the diagonal mass matrix scaled by the determinant of the geometric Jacobian $J$ evaluated at volume quadrature points. 
The discrete vectors of ``entropy-projected'' conservative variables with tildes (e.g., $\tilde{\fnt{u}}$) are constructed by first evaluating the interpolated entropy variables
\[
	\fnt{v}_f = \fnt{E}\bm{v}(\fnt{u}_h)
\]
then re-evaluating the conservative variables in terms of $\fnt{v}_f$
%The  $\tilde{\fnt{u}}$ are then defined as
\[
	\tilde{\fnt{u}}_f = \bm{u}(\fnt{v}_f), \quad  
	\tilde{\fnt{u}} = \begin{bmatrix} \fnt{u}\\ \tilde{\fnt{u}}_f \end{bmatrix}
\]
The flux matrix $\fnt{F}_i$ contains evaluation of the entropy conservative numerical flux at different pairs of solution values at nodal (both volume and surface quadrature) points. Finally, the exterior state $\tilde{\fnt{u}}_f^+$ used to evaluate the numerical flux $\fnt{f}_i^*$ corresponds either to interface values on a neighboring element or an exterior state used to enforce boundary conditions.  Assuming continuity in time, the formulation (\ref{eq:esdg}) is entropy conservative in the sense that
\[
\fnt{1}^T\fnt{M}\td{S(\fnt{u}_h)}{t} + \sum_{i=1}^d\fnt{1}^T\fnt{B}_i\LRp{\fnt{v}_f^T\fnt{f}_i^* - \psi_i(\tilde{\fnt{u}}_f)} = 0.
\]
The proof uses similar techniques as proofs in other papers \cite{chen2017entropy, crean2018entropy, chan2017discretely, chan2019skew}
The proof of entropy conservation relies mainly on Lemma~\ref{lemma:Qhprops}, which states that $\fnt{Q}_{i,h}$ is conservative (e.g., exact for constants) and satisfies the SBP property.  Extensions to the non-conforming setting will utilize the same properties.  

All entropy conservative schemes described here can be made entropy stable by introducing entropy dissipation through mechanisms such as physical or artificial viscosity \cite{tadmor2006entropy,upperman2019entropy}.  We introduce dissipation by incorporating a penalization term into the interface flux \cite{winters2017uniquely}.  For example, Lax-Friedrichs dissipation can be added by modifying the interface flux term $\fnt{B}_i\fnt{f}^*_i$ 
\[
\fnt{B}_i\fnt{f}^*_i \Longrightarrow \fnt{B}_i\fnt{f}^*_i - \frac{\lambda}{2} \jump{\tilde{\fnt{u}}_f}, \qquad  \jump{\tilde{\fnt{u}}_f} = \tilde{\fnt{u}}_f^+ - \tilde{\fnt{u}}_f,
\]
where $\lambda$ is an estimate of the maximum wave speed \cite{chen2017entropy, chan2017discretely}.  

For the remainder of this work, we will construct formulations and prove they are entropy conservative, with the understanding that they can be made entropy stable by incorporating an entropy dissipative penalty term.

\section{Non-conforming meshes}
\label{sec:2}

Section~\ref{sec:1} describes the construction of entropy-stable schemes on geometrically conforming meshes, where each element shares at most one neighbor across a face.  We extend the construction of stable schemes to meshes containing geometric non-conformity, where an element can share a face with two or more neighboring elements.  To ensure stability, the coupling conditions imposed at this non-conforming face must be handled appropriately.  For DG discretizations, this is most naturally achieved by combining composite quadrature rules on non-conforming faces with appropriate evaluations of volume terms \cite{kozdon2018energy}.  However, for entropy-stable schemes using hybridized SBP operators, the naive use of composite quadrature at non-conforming interfaces can significantly increase the computational cost.

For quad and hex elements under tensor product volume quadrature, the differentiation matrices are Kronecker products of 1D differentiation matrices and diagonal mass matrices, which result in sparse operators.  As a result, flux evaluations are only required between ``lines'' of volume nodes \cite{carpenter2014entropy, chan2018efficient}.  Flux evaluations also follow the sparsity pattern of the matrix $\fnt{E}$, which maps from volume nodes to surface nodes.  For Lobatto or Gauss collocation methods on conforming quadrilateral and hexahedral meshes, $\fnt{E}$ is sparse if the surface quadrature nodes are aligned with volume quadrature nodes \cite{chan2018efficient}.  In such cases, flux evaluations are required only between lines of volume nodes and adjacent surface nodes, as shown in Figure~\ref{subfig:aligned}.

Composite quadrature rules on non-conforming meshes, however, are not aligned with volume nodes.  Suppose that $D^k$ is a quadrilateral element with two neighbors across each face, such that every face is non-conforming and utilizes a composite quadrature rule.  The entropy conservative formulation (\ref{eq:esdg}) can be extended to the non-conforming case by redefining the interpolation matrix $\fnt{E}$ as the matrix which interpolates from volume nodes to composite surface nodes.  However, this version of $\fnt{E}$ is fully dense, and evaluating the formulation (\ref{eq:esdg}) requires flux evaluations between each surface node and \textit{all} volume nodes, as illustrated in Figure~\ref{subfig:nonaligned}.  This greatly increases computational costs, especially at high orders of approximation.  %The proposed work will reduce the number of dyadic flux evaluations required for such composite quadratures.  The increased cost of (\ref{eq:esdg}) under composite quadrature is due to the lack of sparsity in $\fnt{E}$, which now interpolates from volume to \textit{composite} surface nodes.  

\begin{figure}
\centering
\subfloat[Flux evaluations required for aligned surface nodes]{\raisebox{.1em}{\includegraphics[width=.375\textwidth]{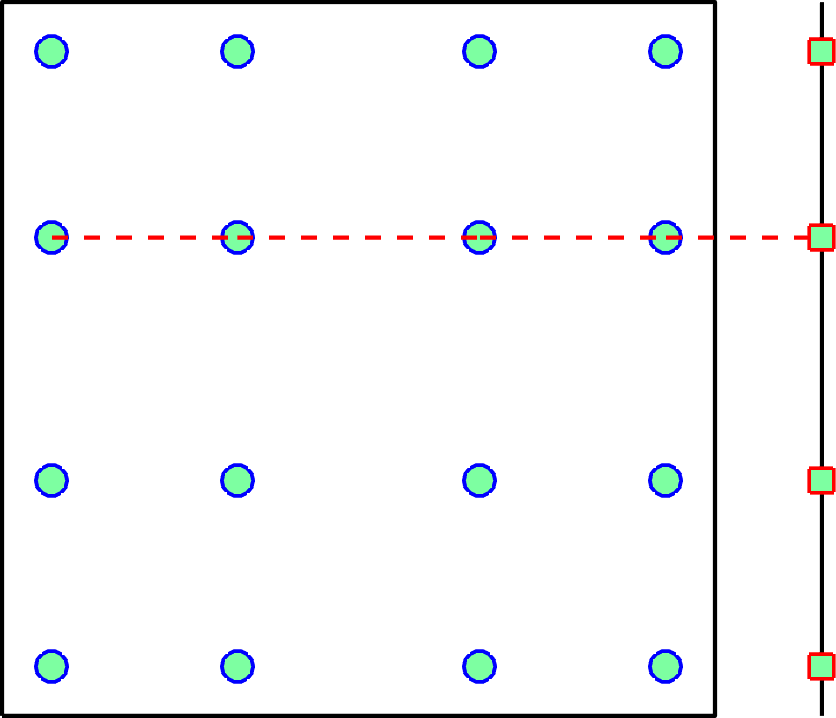}}\label{subfig:aligned}}
\hspace{4em}
\subfloat[Flux evaluations required for non-aligned surface nodes]{\includegraphics[width=.37\textwidth]{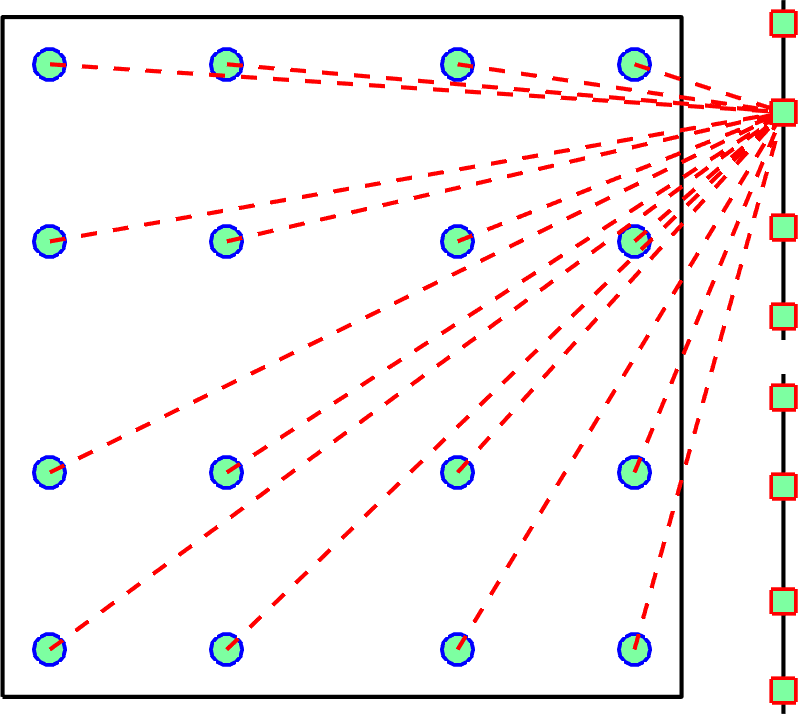}\label{subfig:nonaligned}}
\caption{Nodes between which flux evaluations are required for Gauss nodes. Aligned surface nodes (conforming interfaces) require evaluations between each surface node and a line of volume nodes, while non-aligned surface nodes (non-conforming interfaces) require flux evaluations between a surface node and \textit{all} volume nodes.  } %Figure~\ref{subfig:aligned} and \ref{subfig:nonaligned} illustrate when dyadic flux evaluations $\bm{f}^i_S\LRp{\tilde{\bm{u}}_j,\tilde{\bm{u}}_k}$ between nodes are necessary (indicated by dashed red lines) for conforming interfaces (Figure~\ref{subfig:aligned}) and non-conforming interfaces (Figure~\ref{subfig:nonaligned}) under a Gauss collocation scheme.  Conforming interfaces (aligned surface nodes) require evaluations between a surface node and a line of volume nodes, while non-conforming interfaces (nonaligned composite surface nodes) require flux evaluations between a surface node and \textit{all} volume nodes.  The proposed mortar approach (Figure~\ref{subfig:mortar}) reduces the number of dyadic flux evaluations by introducing composite nodes as mortars which couple only to surface nodes. }
\label{fig:fluxsparsity}
\end{figure}

%This work is motivated by complications which arise when designing entropy stable couplings between elements which do not share the same boundary nodes.  This can arise, for example, for hybrid and non-conforming meshes.  
%\paragraph{Hybrid meshes:} entropy stable DG methods on hybrid meshes were introduced in \cite{chan2019skew} using a skew-symmetric formulation.  The resulting methods are stable for more arbitrary choices of surface quadrature, in particular when an SBP property may not hold.  
%
%\paragraph{Non-conforming meshes}

%On conforming meshes, it is most efficient to utilize both Gauss quadrature for volume integrals and Gauss quadrature for face or surface integrals.  For solutions represented in terms of their values at tensor product volume Gauss nodes, extrapolation to face Gauss nodes can be done in an efficient line-by-line manner using one-dimensional interpolation matrices.  
%
%For non-conforming meshes, it can be advantageous to use composite Gauss quadratures on non-conforming interfaces \cite{kozdon2018energy}.  However, interpolating the solution at volume Gauss nodes to split-side Gauss nodes is no longer a one-dimensional operation.  

\section{Entropy-stable mortar formulations}
\label{sec:3}

The goal of this work is to reduce computational costs for Gauss collocation schemes in the presence of non-conforming interfaces.  This can be done by treating composite quadrature nodes as a layer of ``mortar'' nodes which are coupled directly to surface nodes, but not directly to the volume nodes, as illustrated in Figure~\ref{fig:gqcon_noncon}.  This results in modifications of the matrices involved in the entropy-stable formulation (\ref{eq:esdg}).  These modifications preserve both high-order accuracy and entropy stablility, and yield an implementation which is identical to that of (\ref{eq:esdg}) except for a face-local correction to the numerical flux.  
\begin{figure}
\centering
\includegraphics[width=.6\textwidth]{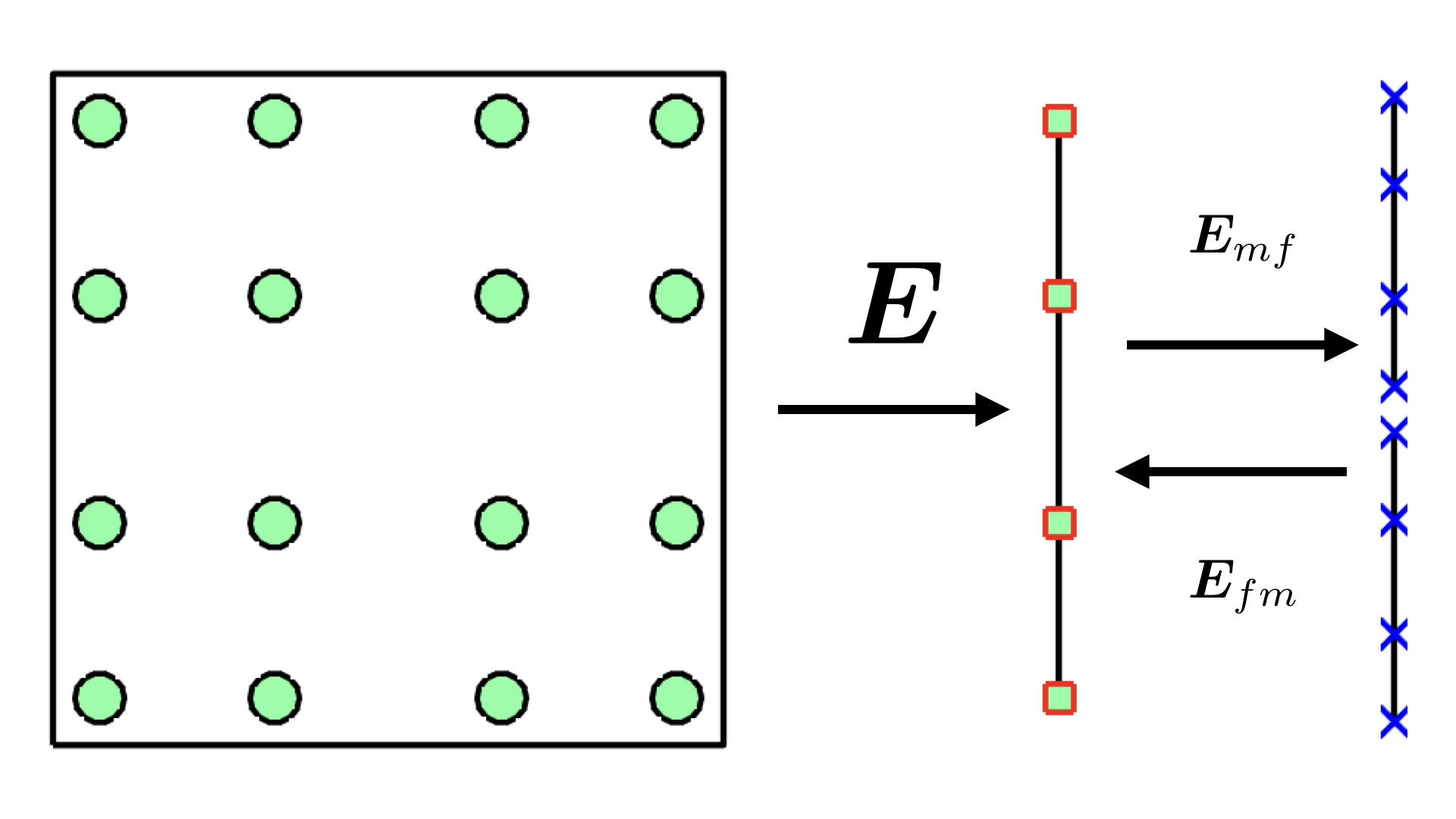}
\caption{Illustration of mortar operators for a Gauss collocation scheme.  The matrix $\fnt{E}$ maps from volume quadrature points to surface quadrature points, $\fnt{E}_{mf}$ maps from surface to mortar surface points, and $\fnt{E}_{fm}$ maps from mortar surface points to surface points.}  %\note{Fix notation after Mario standardizes it.}}
\label{fig:gqcon_noncon}
\end{figure}

We introduce new matrices on the reference element $\hat{D}$.  Let $\hat{\fnt{x}}$ denote volume collocation points, and let $\hat{\fnt{x}}_f$ denote face (surface) points on the boundary $\partial \hat{D}$.  To simplify notation, we assume from this point onwards that the nodes are ordered face-by-face.  In 2D, this implies that $\hat{\fnt{x}}_f$ is 
\begin{equation}
\hat{\fnt{x}}_f = \begin{bmatrix}
\hat{\fnt{x}}_{f,1} &
\hat{\fnt{x}}_{f,2} &
\hat{\fnt{x}}_{f,3} &
\hat{\fnt{x}}_{f,4}
\end{bmatrix}^T,
\label{eq:facenodeordering}
\end{equation}
where $\hat{\fnt{x}}_{f,j}$ denotes the vector of 1D nodal positions on the $j$th face of the reference quadrilateral $[-1,1]^2$.  Recall that the matrix $\fnt{E}$ as defined in (\ref{eq:EBdefs}) interpolates from volume collocation points $\hat{\fnt{x}}$ to a specific ordering of the surface points $\hat{\fnt{x}}_f$.  The ordering (\ref{eq:facenodeordering}) simply corresponds to a permutation of the interpolation matrix $\fnt{E}$.  

We now introduce a second set of mortar points $\hat{\fnt{x}}_m$ on a face of the reference element.  We assume these points also correspond to quadrature nodes with corresponding weights $\fnt{w}_m$.  For example, mortar nodes can be composite Gauss or Lobatto quadrature nodes (for $h$ non-conforming interfaces), higher degree Gauss or Lobatto nodes (for $p$ non-conforming interfaces), or identical to the surface nodes $\hat{\fnt{x}}_f$ (non-mortar interface).   For simplicity of notation, we assume that each face has the same set of mortar nodes; however, it is straightforward to extend this to the case when the mortar nodes vary face-by-face.  

We define the interpolation matrix $\fnt{E}_{mf}$ as the operator which maps values at surface nodes to values at mortar nodes.  In 2D, since the face of a quadrilateral is a 1D line, we can define $\fnt{E}_{mf}$ as the block matrix acting on the surface nodes on the four faces 
\[
\fnt{E}_{mf} = \fnt{I}_{4}\otimes \fnt{E}_m,
\]
where $\fnt{I}_{4}$ is the 4-by-4 identity matrix and $\fnt{E}_m$ is the mortar interpolation matrix over the reference face (interval) $[-1,1]$.  The matrix $\fnt{E}_m$ is defined in terms of $\hat{x}_{m,i}$, the mortar nodes mapped to the reference interval $[-1,1]$,   
\[
(\fnt{E}_{m})_{ij} = \ell_j(\hat{x}_{m,i}), \qquad 1\leq i \leq \text{num.\ mortar points}, \quad 1\leq j \leq (N+1).
\]
We can define an analogous matrix $\fnt{E}_{fm}$ which maps from the mortar nodes back to the surface nodes.  We first define the face mass matrix as the block diagonal matrix whose blocks are 1D diagonal reference mass matrices
\[
\fnt{M}_f = \fnt{I}_{4\times 4} \otimes \hat{\fnt{M}}_{\rm 1D}.
\]
These matrices are defined in 2D for simplicity, but are straightforward to extend to 3D.  
The matrix $\fnt{E}_{fm}$ can now be defined through a quadrature-based $L^2$ projection 
\begin{equation}
\fnt{E}_{fm} = \hat{\fnt{M}}_f^{-1}\fnt{E}_{mf}^T\hat{\fnt{M}}_m, \qquad \hat{\fnt{M}}_m = \fnt{I}_{4}\otimes \diag{\fnt{w}_m}.
\label{eq:Efm}
\end{equation}
Finally, we introduce diagonal boundary matrices on surface and mortar nodes 
\begin{equation}
\hat{\fnt{B}}_{i,f} = \diag{\hat{\fnt{n}}_{i,f}}\hat{\fnt{M}}_f %\circ \fnt{w}_f}, 
\qquad \hat{\fnt{B}}_{i,m} = \diag{\hat{\fnt{n}}_{i,m}}\hat{\fnt{M}}_m,% \circ \fnt{w}_m},
\label{eq:Bi}
\end{equation}
where $\hat{\fnt{n}}_{i,f}, \hat{\fnt{n}}_{i,m}$ are vectors containing components of the scaled reference normals at face and mortar points, respectively.  

%\begin{remark}
%One could alternatively define $\fnt{E}_{fm}$ via
%\[
%\fnt{E}_{fm} = \LRp{\fnt{E}_{mf}^T\fnt{M}_m\fnt{E}_{mf}}^{-1}\fnt{E}_{mf}^T\fnt{M}_m.
%\]
%However, it will turn out that this approach does not 
%\end{remark}

Note that $\fnt{E}_{fm}$ exactly recovers polynomials of a certain degree on the reference face, where the degree of the polynomial is related to the accuracy of the surface and mortar quadratures.
\begin{lemma}
\label{lemma:Efm}
Suppose the face (surface) quadrature is exact for polynomials of degree $N+N_f$ and mortar quadratures are exact for degree $N+N_m$ polynomials.  Then, $\fnt{E}_{fm}$ exactly recovers polynomials of degree $\min(N,N_f,N_m)$.
\end{lemma}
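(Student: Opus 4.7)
The plan is to exploit the fact that $\fnt{E}_{fm}$, as defined in (\ref{eq:Efm}), is precisely the matrix representation of a quadrature-based $L^2$ projection from a mortar-side function space onto the degree $N$ surface polynomial space, with inner products evaluated using the mortar and surface quadrature rules respectively. Since each face is decoupled (both $\hat{\fnt{M}}_f$ and $\hat{\fnt{M}}_m$ are block diagonal face-by-face, and $\fnt{E}_{mf} = \fnt{I}_4\otimes \fnt{E}_m$), it suffices to prove the statement for a single face; the 3D case follows from the same argument with an analogous tensor-product block structure.

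Fix a polynomial $p$ of degree at most $d := \min(N,N_f,N_m)$, and set $\fnt{p}_m = p(\hat{\fnt{x}}_m)$ and $\fnt{p}_f = p(\hat{\fnt{x}}_f)$. The goal is to show $\fnt{E}_{fm}\fnt{p}_m = \fnt{p}_f$. For an arbitrary test vector $\fnt{q}_f$ on the surface nodes, representing a degree $N$ polynomial $q$ via the Lagrange basis at surface nodes, I would test against $\fnt{q}_f^T \hat{\fnt{M}}_f$ and unfold the definition of $\fnt{E}_{fm}$:
\[
\fnt{q}_f^T \hat{\fnt{M}}_f \fnt{E}_{fm}\fnt{p}_m = \fnt{q}_f^T \fnt{E}_{mf}^T \hat{\fnt{M}}_m \fnt{p}_m = (\fnt{E}_{mf}\fnt{q}_f)^T \hat{\fnt{M}}_m \fnt{p}_m.
\]
Since $q$ has degree at most $N$ and $\fnt{E}_{mf}$ is built from degree $N$ Lagrange interpolation, $\fnt{E}_{mf}\fnt{q}_f = q(\hat{\fnt{x}}_m)$ exactly, so the right-hand side is the mortar quadrature approximation of $\int qp$. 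Because $qp$ has degree at most $N + d \leq N + N_m$, the mortar quadrature is exact and the expression equals $\int qp$.

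The same maneuver on $\fnt{q}_f^T \hat{\fnt{M}}_f \fnt{p}_f$ recognizes it as the surface quadrature approximation of $\int qp$; since $qp$ has degree at most $N + d \leq N + N_f$, this is also exact and equals $\int qp$. Combining the two identities yields $\fnt{q}_f^T \hat{\fnt{M}}_f(\fnt{E}_{fm}\fnt{p}_m - \fnt{p}_f) = 0$ for every $\fnt{q}_f$. Since $\hat{\fnt{M}}_f$ is diagonal with strictly positive weights, it is invertible, and the conclusion $\fnt{E}_{fm}\fnt{p}_m = \fnt{p}_f$ follows.

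The only subtle point, and the one I would emphasize, is the role of the three different bounds: $N_f$ and $N_m$ enter to make the two quadrature rules exact on the products $qp$, while the bound $d \leq N$ is what guarantees that (i) $\fnt{E}_{mf}\fnt{q}_f$ gives pointwise values of $q$ at mortar nodes and (ii) $p$ itself lies in the degree $N$ polynomial space spanned by the surface Lagrange basis, so that recovering $p(\hat{\fnt{x}}_f)$ through a projection onto that space is even possible. This explains the appearance of $N$ in the minimum.
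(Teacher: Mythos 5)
Your proof is correct and follows essentially the same approach as the paper: testing against an arbitrary degree-$N$ polynomial $q$ and using exactness of both quadrature rules on the product $qp$ is, upon choosing $\fnt{q}_f$ to be the Lagrange unit vectors, precisely the paper's entrywise identity $\fnt{E}_{mf}^T\hat{\fnt{M}}_m\fnt{E}_{mf}\fnt{p}_f = \hat{\fnt{M}}_f\fnt{p}_f$, concluded by inverting the positive diagonal matrix $\hat{\fnt{M}}_f$. The only cosmetic difference is that you feed in $p(\hat{\fnt{x}}_m)$ directly rather than $\fnt{E}_{mf}\,p(\hat{\fnt{x}}_f)$, which are equal since $\deg p \le N$.
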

\begin{proof}
Let $u(\bm{x})$ be a polynomial of degree $\min(N,N_f,N_m)$ or less, and let $\fnt{u}_f$ be its values on the face (surface) nodes.  Then, $\fnt{u}_m = \fnt{E}_{mf}\fnt{u}_f$ are the interpolated values of the polynomial on the mortar nodes.  Applying $\fnt{E}_{fm}$ yields
\[
\fnt{E}_{fm}\fnt{u}_m = \hat{\fnt{M}}_f^{-1}\fnt{E}_{mf}^T\hat{\fnt{M}}_m\fnt{E}_{mf}\fnt{u}_f.
\]
The entries of $\fnt{E}_{mf}^T\hat{\fnt{M}}_m\fnt{E}_{mf}$ are integrals of products of Lagrange basis functions with $u(\bm{x})$, where all integrals are approximated using mortar quadrature.  

Since $u$ is degree $\min(N,N_f,N_m)$ and each Lagrange basis function is degree $N$, the integrand is computed exactly under the mortar quadrature.  Moreover, since $u$ is degree $\min(N,N_f,N_m)$, it is also computed exactly using the face (surface) quadrature.  Thus, $\fnt{E}_{mf}^T\hat{\fnt{M}}_m\fnt{E}_{mf} \fnt{u}_f = \hat{\fnt{M}}_f \fnt{u}_f$ by exactness of the face and mortar quadratures, and $\fnt{E}_{fm}\fnt{u}_m = \fnt{u}_f$.
\qed\end{proof}

\subsection{Mortar-based hybridized SBP operators}

The matrices $\fnt{E}_{mf}, \fnt{E}_{fm}, \hat{\fnt{B}}_{i,f}, \hat{\fnt{B}}_{i,m}$ can now be used to construct SBP operators which involve mortar nodes.  We define the mortar-based hybridized SBP operator on the reference element $\hat{\fnt{Q}}_{i,m}$ to be
\begin{align}
\hat{\fnt{Q}}_{i,m} = \frac{1}{2}\begin{bmatrix}
\hat{\fnt{Q}}_i - \hat{\fnt{Q}}_i^T & \fnt{E}^T\hat{\fnt{B}}_{i,f} & \\
-\hat{\fnt{B}}_{i,f}\fnt{E} & & \hat{\fnt{B}}_{i,f} \fnt{E}_{fm}\\
& -\hat{\fnt{B}}_{i,m} \fnt{E}_{mf} & \hat{\fnt{B}}_{i,m}
\end{bmatrix}.
\label{eq:mhsbp}
\end{align}

Since it is not clear at first glance how the hybridized operator $\hat{\fnt{Q}}_{i,h}$ or its mortar-based variant $\hat{\fnt{Q}}_{i,m}$ can be used to perform differentiation, we review intuitive explanations of each operator.  

Let $\fnt{u}$ denote basis coefficients for some function $u(\bm{x})$, and let $f, g$ denote two functions on the reference element.  Suppose $\fnt{u}$ satisfies the following matrix system involving the standard hybridized operator $\hat{\fnt{Q}}_{i,h}$
\begin{align}
\hat{\fnt{M}}\fnt{u} = \begin{bmatrix}
\fnt{I}\\
\fnt{E}
\end{bmatrix}^T \diag{\fnt{f}}\hat{\fnt{Q}}_{i,h} {\fnt{g}}, \qquad \fnt{f} = \begin{bmatrix} f(\hat{\fnt{x}}) \\ f(\hat{\fnt{x}}_f)\end{bmatrix}, \qquad \fnt{g} = \begin{bmatrix} g(\hat{\fnt{x}}) \\ g(\hat{\fnt{x}}_f)\end{bmatrix}.
\label{eq:Qh}
\end{align}
It was shown in \cite{chan2017discretely, chan2019skew} that this corresponds to a high-order accurate approximation of $f\pd{g}{x_i}$.
Expanding out the different blocks of (\ref{eq:Qh}) and using generalized summation by parts for $\hat{\fnt{Q}}_i$ yields
\begin{align*}
\hat{\fnt{M}}\fnt{u} &= 
\diag{f(\hat{\fnt{x}})}\hat{\fnt{Q}}_i g(\hat{\fnt{x}}) + \frac{1}{2}\LRp{\diag{f(\hat{\fnt{x}})}\fnt{E}^T + \fnt{E}^T f(\hat{\fnt{x}}_f)}\hat{\fnt{B}}_i \LRp{g(\hat{\fnt{x}}_f) - \fnt{E} g(\hat{\fnt{x}})}.
\end{align*}
The latter expression can be interpreted as a boundary correction term.  Since $\fnt{E}$ is a high-order accurate boundary interpolation operator, this correction term vanishes if $g$ is a degree $N$ polynomial.  
%Replacing quadrature rules by the corresponding integrals, we can re-interpret (\ref{eq:Qh}) as the following variational problem: find a degree $N$ polynomial $u(\bm{x})$ such that, for all polynomials $v(\bm{x})$ of degree $N$,
%\begin{align*}
%\int_{\hat{D}} u(\bm{x}) v(\bm{x}) &= \frac{1}{2} \int_{\hat{D}} \LRp{f\pd{\Pi_N (g)}{x_i}v - g\pd{\Pi_N(fv)}{x_i}} \\
%&+ \frac{1}{2}\int_{\partial \hat{D}} \LRp{g \Pi_N(fv) + \LRp{g - \Pi_N (g)} fv}\hat{n}_i
%\end{align*}
%where $\Pi_N$ denotes the (quadrature-based) $L^2$ projection operator onto degree $N$ polynomials on $\hat{D}$.  Integrating by parts yields
%\begin{align*}
%\int_{\hat{D}} u(\bm{x}) v(\bm{x}) &= \int_{\hat{D}} f\pd{\Pi_N (g)}{x_i}v + \int_{\partial \hat{D}} \LRp{g - \Pi_N(g)}\frac{\LRp{fv+\Pi_N(fv)}}{2}\hat{n}_i
%\end{align*}
%Thus, we can interpret $u(\bm{x})$ as the $L^2$ projection of $f\pd{\Pi_N (g)}{x_i}$ with additional boundary correction terms.  These boundary correction terms vanish if $g$ is a polynomial of degree $N$.  

The mortar-based hybridized SBP operator can be similarly interpreted as a high-order differentiation operator.  Recall that $\fnt{E}_{mf}$ interpolates from face nodes to mortar nodes, and that $\fnt{E}$ interpolates from volume nodes to face nodes.  Thus, the matrix $\fnt{E}_{mf}\fnt{E}$ interpolates from volume nodes to mortar nodes.  We can use this matrix to replicate (\ref{eq:Qh}) for the mortar-based operator $\hat{\fnt{Q}}_{i,m}$.  Let $\fnt{u}$ solve the following system
\begin{align}
\hat{\fnt{M}}\fnt{u} = \begin{bmatrix}
\fnt{I}\\
\fnt{E}\\
\fnt{E}_{mf}\fnt{E}
\end{bmatrix}^T \diag{\fnt{f}}\hat{\fnt{Q}}_{i,m} \diag{\fnt{g}}, \qquad \fnt{f} = \begin{bmatrix} f(\hat{\fnt{x}}) \\ f(\hat{\fnt{x}}_f)\\ f(\hat{\fnt{x}}_m)\end{bmatrix}, \quad \fnt{g} = \begin{bmatrix} g(\hat{\fnt{x}}) \\ g(\hat{\fnt{x}}_f)\\ g(\hat{\fnt{x}}_m)\end{bmatrix}.
\label{eq:Qm}
\end{align}
Expanding out terms yields a similar expression involving multiple correction terms
\begin{align}
\hat{\fnt{M}}\fnt{u} &= \diag{f(\hat{\fnt{x}})}\hat{\fnt{Q}}_i{g}(\hat{\fnt{x}}) \label{eq:mhsbp_correction}\\
&+ \frac{1}{2} \diag{f(\hat{\fnt{x}})}\fnt{E}^T\hat{\fnt{B}}_{i,f}\LRp{g(\hat{\fnt{x}}_f) - \fnt{E}g(\hat{\fnt{x}})} \nonumber\\
&+ \frac{1}{2} \fnt{E}^T\diag{f(\hat{\fnt{x}}_f)}\hat{\fnt{B}}_{i,f}\LRp{\fnt{E}_{fm}g(\hat{\fnt{x}}_m) - \fnt{E}g(\hat{\fnt{x}})} \nonumber\\
&+ \frac{1}{2} \fnt{E}^T\fnt{E}_{mf}^T\diag{f(\hat{\fnt{x}}_m)}\hat{\fnt{B}}_{i,m}\LRp{g(\hat{\fnt{x}}_m) - \fnt{E}_{mf}g(\hat{\fnt{x}}_f)} \nonumber
\end{align}
We can show that these correction terms vanish for polynomial functions $g$.  %when $f = 1$ and $g$ is a degree $N$ polynomial.  

\begin{lemma}
\label{lemma:accuracy}
Let $f(\bm{x}) = 1$ and suppose that the face (surface) quadrature is exact for polynomials of degree $N+N_f$ and mortar quadratures are exact for degree $N+N_m$ polynomials.  Then, derivative approximations given by the equations (\ref{eq:Qm}) and (\ref{eq:mhsbp_correction}) are exact if $g(\bm{x})$ is a degree $\min(N,N_f,N_m)$ polynomial.
\end{lemma}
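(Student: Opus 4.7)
The plan is to specialize the identity (\ref{eq:mhsbp_correction}) to the case $f(\bm{x}) = 1$ and then show, term by term, that each of the three boundary correction terms vanishes whenever $g$ is a polynomial of degree $\min(N,N_f,N_m)$, leaving only the volume contribution $\hat{\fnt{Q}}_i g(\hat{\fnt{x}})$. Since $\hat{\fnt{Q}}_i$ is constructed from the 1D collocation differentiation matrix via (\ref{eq:kron}) and the 1D matrix is exact for polynomials of degree $\leq N$, this residual term is the exact scaled derivative approximation and the lemma follows.

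The first step is bookkeeping: with $f \equiv 1$, the vector $\fnt{f}$ becomes $\fnt{1}$ at volume, face, and mortar nodes, so the three correction terms reduce to
\begin{align*}
&\tfrac{1}{2}\fnt{E}^T\hat{\fnt{B}}_{i,f}\LRp{g(\hat{\fnt{x}}_f) - \fnt{E}g(\hat{\fnt{x}})},\\
&\tfrac{1}{2}\fnt{E}^T\hat{\fnt{B}}_{i,f}\LRp{\fnt{E}_{fm}g(\hat{\fnt{x}}_m) - \fnt{E}g(\hat{\fnt{x}})},\\
&\tfrac{1}{2}\fnt{E}^T\fnt{E}_{mf}^T\hat{\fnt{B}}_{i,m}\LRp{g(\hat{\fnt{x}}_m) - \fnt{E}_{mf}g(\hat{\fnt{x}}_f)}.
\end{align*}
For each term the strategy is to show that the parenthesized difference is zero under the assumed polynomial degree.

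The second step is to dispatch the first and third terms using exactness of $\fnt{E}$ and $\fnt{E}_{mf}$ as polynomial interpolation operators. Because $\fnt{E}$ interpolates from $(N+1)$ collocation nodes to face nodes, $\fnt{E}g(\hat{\fnt{x}}) = g(\hat{\fnt{x}}_f)$ for any polynomial of degree $\leq N$; since $\min(N,N_f,N_m) \leq N$, this kills the first correction term. Analogously, $\fnt{E}_{mf}$ interpolates a degree $\leq N$ polynomial at face nodes onto the mortar nodes, so $\fnt{E}_{mf}g(\hat{\fnt{x}}_f) = g(\hat{\fnt{x}}_m)$ and the third correction term also vanishes.

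The third step handles the middle term, which is the only place the mortar projection matters. Here I would invoke Lemma~\ref{lemma:Efm}: under the stated accuracy assumptions on the surface and mortar quadratures, $\fnt{E}_{fm}$ exactly recovers polynomials of degree $\min(N,N_f,N_m)$, so $\fnt{E}_{fm}g(\hat{\fnt{x}}_m) = g(\hat{\fnt{x}}_f)$. Combined with $\fnt{E}g(\hat{\fnt{x}}) = g(\hat{\fnt{x}}_f)$ from the previous step, the middle parenthesis collapses to zero. With all three corrections eliminated, (\ref{eq:mhsbp_correction}) reduces to $\hat{\fnt{M}}\fnt{u} = \hat{\fnt{Q}}_i g(\hat{\fnt{x}})$, which is the standard collocation derivative and exact for degree $\leq N$ polynomials.

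I do not expect any real obstacle here; the lemma is essentially a packaging result. The only subtlety is making sure the degree assumption $\min(N,N_f,N_m)$ is invoked at precisely the right place, namely when applying Lemma~\ref{lemma:Efm} to the middle correction term, since the other two corrections only need exactness of $\fnt{E}$ and $\fnt{E}_{mf}$ on degree $\leq N$ polynomials. It will also be worth noting explicitly in the writeup that we never use any property of $\hat{\fnt{B}}_{i,f}$ or $\hat{\fnt{B}}_{i,m}$ beyond their existence, so the result holds for arbitrary (sufficiently accurate) choices of normals and weights.
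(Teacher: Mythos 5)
Your proposal is correct and follows essentially the same route as the paper's proof: specialize to $f=1$, kill the first and third correction terms in (\ref{eq:mhsbp_correction}) using that $\fnt{E}$ and $\fnt{E}_{mf}$ are degree $N$ interpolation operators, and dispatch the middle term via Lemma~\ref{lemma:Efm}, which is exactly where the $\min(N,N_f,N_m)$ degree restriction enters. Your writeup is simply a more detailed version of the paper's argument, with the additional (correct) observation that no properties of $\hat{\fnt{B}}_{i,f}$, $\hat{\fnt{B}}_{i,m}$ are needed.
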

\begin{proof}
Following \cite{chan2017discretely}, if $f = 1$  and $g(\bm{x})$ is a degree $N$ polynomial, then  (\ref{eq:mhsbp_correction}) is exact if the correction terms vanish.  The first and third correction terms are zero by the fact that $\fnt{E}, \fnt{E}_{mf}$ are degree $N$ interpolation operators, and the second correction term vanishes since Lemma~\ref{lemma:Efm} implies that $\fnt{E}_{fm}$ exactly recovers polynomials of degree $\min(N,N_f,N_m)$.
\qed\end{proof}

Finally, we note that the mortar-based hybridized SBP operators satisfy a summation by parts property.
\begin{lemma}
\label{lemma:mhsbp}
Let $\hat{\fnt{Q}}_{i,m}$ be defined as in (\ref{eq:mhsbp}).  Then, 
\[
\hat{\fnt{Q}}_{i,m} + \hat{\fnt{Q}}_{i,m}^T = \begin{bmatrix}
\fnt{0} && \\
&\fnt{0} &\\
&& \hat{\fnt{B}}_{i,m}\end{bmatrix}, \qquad \hat{\fnt{Q}}_{i,m}\fnt{1} = \fnt{0}.
\]
\end{lemma}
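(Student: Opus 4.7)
The plan is to verify both identities in Lemma~\ref{lemma:mhsbp} by direct block computation on the $3\times 3$ block structure of $\hat{\fnt{Q}}_{i,m}$ given in (\ref{eq:mhsbp}). The underlying building blocks are: (i) the diagonality (hence symmetry) of $\hat{\fnt{B}}_{i,f}$ and $\hat{\fnt{B}}_{i,m}$; (ii) the multi-dimensional GSBP identity $\hat{\fnt{Q}}_i + \hat{\fnt{Q}}_i^T = \fnt{E}^T \hat{\fnt{B}}_{i,f} \fnt{E}$ inherited from the tensor product construction in (\ref{eq:kron})--(\ref{eq:EBdefs}); (iii) the defining relation $\fnt{E}_{fm} = \hat{\fnt{M}}_f^{-1}\fnt{E}_{mf}^T\hat{\fnt{M}}_m$ from (\ref{eq:Efm}); and (iv) the polynomial-recovery properties $\fnt{E}\fnt{1} = \fnt{1}$, $\fnt{E}_{mf}\fnt{1} = \fnt{1}$ (both are interpolation operators), together with $\fnt{E}_{fm}\fnt{1} = \fnt{1}$ (the degree-zero case of Lemma~\ref{lemma:Efm}).

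For the SBP identity $\hat{\fnt{Q}}_{i,m} + \hat{\fnt{Q}}_{i,m}^T$, I would step through each block. The $(1,1)$ block is antisymmetric and vanishes; the $(1,2)/(2,1)$ cross-term cancels because $\hat{\fnt{B}}_{i,f}$ is symmetric; the $(1,3)/(3,1)$ and $(2,2)$ blocks are zero by construction; and the $(3,3)$ block doubles to $\hat{\fnt{B}}_{i,m}$, producing the only nonzero entry on the right-hand side. The remaining $(2,3)/(3,2)$ cross-term is the crux: it reduces to checking $\hat{\fnt{B}}_{i,f}\fnt{E}_{fm} = \fnt{E}_{mf}^T\hat{\fnt{B}}_{i,m}$. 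Substituting (\ref{eq:Efm}) and (\ref{eq:Bi}), this simplifies to the commutation $\diag{\hat{\fnt{n}}_{i,f}}\fnt{E}_{mf}^T = \fnt{E}_{mf}^T\diag{\hat{\fnt{n}}_{i,m}}$. Because $\fnt{E}_{mf} = \fnt{I}_{4}\otimes \fnt{E}_m$ is block-diagonal in the face-by-face ordering (\ref{eq:facenodeordering}) and the reference normal is constant on each face of $\hat{D}$, both sides act on each face block as the same scalar multiple of $\fnt{E}_m^T$, so they agree.

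For $\hat{\fnt{Q}}_{i,m}\fnt{1} = \fnt{0}$, I would expand the three block rows. Row 1 reduces to $\tfrac{1}{2}(-\hat{\fnt{Q}}_i^T\fnt{1} + \fnt{E}^T\hat{\fnt{B}}_{i,f}\fnt{1})$ after using $\hat{\fnt{Q}}_i\fnt{1} = \fnt{0}$; applying the multi-dimensional GSBP identity together with $\fnt{E}\fnt{1} = \fnt{1}$ then mirrors the derivation in (\ref{eq:Qh1}) and makes the row vanish. Row 2 reduces to $\tfrac{1}{2}\hat{\fnt{B}}_{i,f}(\fnt{E}_{fm}\fnt{1} - \fnt{E}\fnt{1})$, which vanishes because $\fnt{E}\fnt{1} = \fnt{1}$ and $\fnt{E}_{fm}\fnt{1} = \fnt{1}$. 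Row 3 reduces to $\tfrac{1}{2}\hat{\fnt{B}}_{i,m}(\fnt{1} - \fnt{E}_{mf}\fnt{1})$, which vanishes because $\fnt{E}_{mf}$ is an interpolation operator.

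The main obstacle is the $(2,3)/(3,2)$ cross-term in the SBP identity: it is the only place where the surface and mortar nodes interact nontrivially, and the argument requires combining the $L^2$-projection definition of $\fnt{E}_{fm}$, the diagonality of the mass matrices, and the face-wise constancy of the reference normal in a way that matches the block-diagonal structure of $\fnt{E}_{mf}$. Once this compatibility is in hand, the rest of the verification is routine bookkeeping on the $3\times 3$ block structure.
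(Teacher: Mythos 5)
Your proof is correct and follows essentially the same route as the paper: the SBP identity is reduced to the cross-block compatibility $\hat{\fnt{B}}_{i,f}\fnt{E}_{fm} = \fnt{E}_{mf}^T\hat{\fnt{B}}_{i,m}$, which you verify exactly as the paper does by substituting $\fnt{E}_{fm} = \hat{\fnt{M}}_f^{-1}\fnt{E}_{mf}^T\hat{\fnt{M}}_m$, cancelling the diagonal mass matrices, and commuting $\diag{\hat{\fnt{n}}_{i,f}}$ through the face-block-diagonal $\fnt{E}_{mf}^T$ using the face-wise constancy of the reference normals. Your explicit row-by-row verification of $\hat{\fnt{Q}}_{i,m}\fnt{1}=\fnt{0}$ (via $\fnt{E}\fnt{1}=\fnt{1}$, $\fnt{E}_{mf}\fnt{1}=\fnt{1}$, and the degree-zero case of Lemma~\ref{lemma:Efm} giving $\fnt{E}_{fm}\fnt{1}=\fnt{1}$) is a sound completion of a step the paper's printed proof leaves implicit, since that proof only works out the SBP property and tacitly relies on the analogue of (\ref{eq:Qh1}) for conservation.
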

\begin{proof}
The SBP property holds if 
\begin{equation}
\hat{\fnt{B}}_{i,f} \fnt{E}_{fm} = (\hat{\fnt{B}}_{i,m} \fnt{E}_{mf})^T = \fnt{E}_{mf}^T\hat{\fnt{B}}_{i,m},  
\label{eq:mhsbpprop}
\end{equation}
where we have used that $\hat{\fnt{B}}_{i,m}$ is diagonal.  Recall that $\fnt{E}_{fm} = \fnt{M}_f^{-1}\fnt{E}_{mf}^T\fnt{M}_m$.  
Then, by the definition of $\hat{\fnt{B}}_{i,m}, \hat{\fnt{B}}_{i,h}$ in (\ref{eq:Bi}), we have that
\begin{align*}
\hat{\fnt{B}}_{i,f} \fnt{E}_{fm} = \diag{\hat{\fnt{n}}_{i,f}}\fnt{M}_f \fnt{M}_f^{-1} \fnt{E}_{mf}^T\fnt{M}_m 
= \diag{\hat{\fnt{n}}_{i,f}} \fnt{E}_{mf}^T\fnt{M}_m.
\end{align*}
%where we have used that $\bm{M}_m$ is diagonal.  
Then, since the scaled outward normals $\hat{n}_i\hat{J}_f$  are constant over each face of the reference element and $\fnt{E}_{mf}, \fnt{M}_m$ are block diagonal matrices (with each block corresponding to a face), $\diag{\hat{\fnt{n}}_{i,f}}$ commutes and 
\[
\diag{\hat{\fnt{n}}_{i,f}} \fnt{E}_{mf}^T\fnt{M}_m =\fnt{E}_{mf}^T\fnt{M}_m \diag{\hat{\fnt{n}}_{i,m}}= \fnt{E}_{mf}^T\hat{\fnt{B}}_{i,m}.
\]  
\qed\end{proof}

%We can now construct an entropy conservative formulation on the reference element using the mortar-based hybridized SBP operator
%\begin{gather}
%\hat{\bm{M}}\td{\bm{u}_h}{t} + \begin{bmatrix} \bm{I} \\ \bm{E} \\ \bm{E}_{mf}\bm{E} \end{bmatrix}^T
%\sum_{i=1}^d \LRp{2\hat{\bm{Q}}_{i,m} \circ \bm{F}_i}\bm{1} + \bm{E}_{mf}^T\hat{\bm{B}}_{i,m}\LRp{\bm{f}_i^*-\bm{f}_i(\tilde{\bm{u}}_m)} = 0 \label{eq:esdgmref}\\
%\tilde{\bm{u}} = \begin{bmatrix}
%\bm{u}_h\\
%\tilde{\bm{u}}_f\\
%\tilde{\bm{u}}_m
%\end{bmatrix}, \qquad \tilde{\bm{u}}_m = \bm{u}\LRp{\bm{E}_{mf}\bm{E}\bm{v}(\bm{u}_h)}%, \qquad \bm{f}^* = \bm{f}_S\LRp{\tilde{\bm{u}}_m^+,\tilde{\bm{u}}_m},
%\end{gather}
%where $\LRp{\bm{F}_i}_{jk} = \bm{f}^i_S\LRp{\tilde{\bm{u}}_j,\tilde{\bm{u}}_k}$ for all indices $j,k$ of $\tilde{\bm{u}}$, and $\bm{f}_i^*$ is a numerical flux used to impose boundary conditions.
%
%\note{Add statement of entropy conservation}

\subsection{Mortar-based SBP operators and entropy conservative formulations on mapped elements}

We can extend mortar-based SBP operators to mapped curvilinear elements by constructing physical operators in a manner akin to (\ref{eq:curvedQ}).  Abusing notation, we use $\fnt{g}_{ij}$ to now denote the vector containing values of $g_{ij} = J\pd{\hat{x}_j}{x_i}$ at volume, surface, and mortar points.  Then, we define physical operators $\fnt{Q}_{i,m}$ via
\begin{equation}
\fnt{Q}_{i,m} = \frac{1}{2}\sum_{j=1}^d \diag{\fnt{g}_{ij}}\hat{\fnt{Q}}_{j,m} + \hat{\fnt{Q}}_{j,m} \diag{\fnt{g}_{ij}}.
\label{eq:curvedQ_mortar}
\end{equation}
Proofs of entropy conservation (e.g., for the formulation (\ref{eq:esdg})) require that the hybridized SBP operators are conservative (e.g., exact for constants) and satisfy the SBP property.  Entropy conservation for mortar-based formulations on mapped elements will require similar properties.  However, in contrast to hybridized SBP operators, conservation for mapped mortar-based operators requires additional constraints on the accuracy of surface and mortar quadratures relative to the polynomial degree of the geometric mapping.  

We assume now that each geometric mapping from reference element $\hat{D}$ to physical element $D^k$ is a polynomial of degree $N_{\rm geo} \leq N$.  We also introduce the space of tensor product polynomials in $d$ dimensions $Q^{N_1,\ldots, N_d}$ as
\[
Q^{N_1,\ldots, N_d} = \LRc{ \hat{x}_1^{i_1}\hat{x}_2^{i_2}\ldots\hat{x}_d^{i_d}, \qquad 0\leq i_k \leq N_k, \quad k = 1,\ldots,d.}
\]
We denote the isotropic tensor product space $Q^N= Q^{N,\ldots,N}$ for conciseness.  

In \cite{chan2019skew}, it was shown that the geometric terms $g_{ij}$ are tensor product polynomials of specific degrees.  For mapped quadrilateral elements, $g_{ij}$ satisfy
\begin{align*}
&g_{i1} \in Q^{N_{\rm geo},N_{\rm geo}-1}\\
&g_{i2} \in Q^{N_{\rm geo}-1,N_{\rm geo}}.
\end{align*}
for $i = 1,2$, and the geometric terms naturally satisfy the GCL condition (\ref{eq:dgcl}).  

For hexahedral elements, it is more challenging to construct geometric terms $g_{ij}$ which satisfy the GCL while retaining high-order accuracy.  This is further complicated by the fact that we must take into account the polynomial degrees of $g_{ij}$ when proving conservation for mortar-based SBP operators.  We consider two approaches for computing hexahedral geometric terms in this work:
\begin{enumerate}
\item Approach 1: the construction of $g_{ij}$ from \cite{kopriva2006metric}, which yields 
\begin{align*}
&g_{ij} \in Q^{N_{\rm geo}}, \qquad i,j = 1,2,3.
\end{align*}
\item Approach 2: the construction of $g_{ij}$ from \cite{kozdon2018energy} (Appendix C.3, see also Footnote 3 in \cite{chan2019skew}), which yields
\begin{align*}
&g_{i1} \in Q^{N_{\rm geo},N_{\rm geo}-1,N_{\rm geo}-1}\\
&g_{i2} \in Q^{N_{\rm geo}-1,N_{\rm geo},N_{\rm geo}-1}\\
&g_{i3} \in Q^{N_{\rm geo}-1,N_{\rm geo}-1,N_{\rm geo}}, \qquad i = 1,2,3.
\end{align*}
\end{enumerate}
These approaches are described in more detail in Appendix~\ref{app:A}.  
\begin{lemma}
\label{lemma:Qmprops_3d}
Suppose $D^k$ is a tensor product element element with tensor product surface quadratures and that $\fnt{Q}_{i,m}$ is constructed using (\ref{eq:curvedQ_mortar}).  Suppose also that the surface quadrature is exact for $Q^{N+N_f}$,  the mortar quadrature is exact for $Q^{N+N_m}$.  If 
\begin{enumerate}
\item $D^k$ is a quadrilateral element and $N_{\rm geo} \leq \min(N,N_f+1,N_m+1)$, or
\item $D^k$ is a hexahedral element, $g_{ij}$ is constructed via Approach 1 \cite{kopriva2006metric}, and $N_{\rm geo} \leq \min(N,N_f,N_m)$, or
\item $D^k$ is hexahedral element, $g_{ij}$ is constructed via Approach 2 \cite{kozdon2018energy}, and $N_{\rm geo} \leq \min(N,N_f+1,N_m+1)$, 
\end{enumerate}
then the following properties hold:
\begin{align*}
\fnt{Q}_{i,m} + \fnt{Q}_{i,m}^T &= \begin{bmatrix}
\fnt{0} & &\\
& \fnt{0} &\\
& & \fnt{B}_{i,m} \end{bmatrix}, \quad \text{(SBP property)}\\
\fnt{Q}_{i,m}\fnt{1} &= \fnt{0}, \quad \text{(conservation)},
\end{align*}

where the boundary matrix 
\begin{equation}
\fnt{B}_{i,m} = \diag{\fnt{n}_{i,m}\circ \fnt{w}_m}
\end{equation}
is the diagonal matrix whose entries consist of the scaled physical normals. 
\end{lemma}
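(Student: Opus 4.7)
The plan is to prove the two claimed identities separately. The SBP property is essentially a rearrangement of the skew-symmetric splitting combined with Lemma~\ref{lemma:mhsbp}; conservation, on the other hand, is the step where the polynomial degrees of the mapping terms enter through a small number of interpolation/projection exactness conditions on each face.

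For the SBP property, substituting (\ref{eq:curvedQ_mortar}) into $\fnt{Q}_{i,m}+\fnt{Q}_{i,m}^T$ and distributing the transpose across the skew splitting gives
\begin{equation*}
\fnt{Q}_{i,m} + \fnt{Q}_{i,m}^T = \frac{1}{2}\sum_{j=1}^d \diag{\fnt{g}_{ij}}\LRp{\hat{\fnt{Q}}_{j,m}+\hat{\fnt{Q}}_{j,m}^T} + \LRp{\hat{\fnt{Q}}_{j,m}+\hat{\fnt{Q}}_{j,m}^T}\diag{\fnt{g}_{ij}}.
\end{equation*}
Lemma~\ref{lemma:mhsbp} replaces $\hat{\fnt{Q}}_{j,m}+\hat{\fnt{Q}}_{j,m}^T$ with a block matrix whose only nonzero piece is $\hat{\fnt{B}}_{j,m}$ in the mortar--mortar slot. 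The two diagonal matrices in that block commute, so the sum collapses to $\sum_j \diag{\fnt{g}_{ij,m}}\hat{\fnt{B}}_{j,m}$. Factoring $\hat{\fnt{B}}_{j,m}=\diag{\hat{\fnt{n}}_{j,m}}\hat{\fnt{M}}_m$ and invoking Nanson's formula (\ref{eq:nJ_Gnhat}) with $\hat{J}_f=1$ reduces $\sum_j \fnt{g}_{ij,m}\circ \hat{\fnt{n}}_{j,m}$ to $\fnt{n}_{i,m}$, yielding $\fnt{B}_{i,m}=\diag{\fnt{n}_{i,m}\circ\fnt{w}_m}$ in the mortar--mortar block and zero elsewhere.

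For conservation, Lemma~\ref{lemma:mhsbp} also gives $\hat{\fnt{Q}}_{j,m}\fnt{1}=\fnt{0}$, which leaves $\fnt{Q}_{i,m}\fnt{1}=\frac{1}{2}\sum_j \hat{\fnt{Q}}_{j,m}\fnt{g}_{ij}$. I would split this block vector into its volume, face, and mortar components and verify that each vanishes. The mortar component is $\frac{1}{2}\hat{\fnt{B}}_{j,m}\LRs{\fnt{g}_{ij,m}-\fnt{E}_{mf}\fnt{g}_{ij,f}}$; on each reference face only the coordinate $j^{*}$ with $\hat{n}_{j^{*}}\neq 0$ contributes, so the block vanishes as long as $\fnt{E}_{mf}$ reproduces the face restriction of $g_{ij^{*}}$, which it does because $\fnt{E}_{mf}$ is degree-$N$ exact and $N_{\rm geo}\le N$. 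The face component is $\frac{1}{2}\hat{\fnt{B}}_{j,f}\LRs{\fnt{E}_{fm}\fnt{g}_{ij,m}-\fnt{E}\fnt{g}_{ij,v}}$; the same face-localization applies, and vanishing now requires both $\fnt{E}$ and $\fnt{E}_{fm}$ to recover the face-restricted $g_{ij^{*}}$, so by Lemma~\ref{lemma:Efm} the face degree of $g_{ij^{*}}$ must be at most $\min(N,N_f,N_m)$. The volume component becomes, after using the reference GSBP identity $\hat{\fnt{Q}}_j+\hat{\fnt{Q}}_j^T=\fnt{E}^T\hat{\fnt{B}}_{j,f}\fnt{E}$,
\begin{equation*}
\sum_j \hat{\fnt{Q}}_j \fnt{g}_{ij,v} + \frac{1}{2}\fnt{E}^T\sum_j \hat{\fnt{B}}_{j,f}\LRs{\fnt{g}_{ij,f}-\fnt{E}\fnt{g}_{ij,v}};
\end{equation*}
the first sum vanishes by the discrete GCL built into each of the three geometric constructions, and the second vanishes by the same face-restricted exactness just used.

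The main obstacle, and the source of the three bullet conditions in the statement, is the case-by-case bookkeeping of the face degree of $g_{ij^{*}}$. For mapped quadrilaterals, $g_{i1}\in Q^{N_{\rm geo},N_{\rm geo}-1}$ restricts on a face where $\hat{n}_1\neq 0$ to degree $N_{\rm geo}-1$ in the face variable (and symmetrically for $g_{i2}$), giving $N_{\rm geo}\le\min(N,N_f+1,N_m+1)$. For hexahedral Approach 1 each $g_{ij}\in Q^{N_{\rm geo}}$ has face degree $N_{\rm geo}$ in each face variable, yielding the stricter $N_{\rm geo}\le\min(N,N_f,N_m)$. For hexahedral Approach 2, the skew factorization of \cite{kozdon2018energy} lowers the degree in each non-coordinate direction by one, so on the face where $\hat{n}_{j^{*}}\neq 0$ the relevant $g_{ij^{*}}$ has face degree $N_{\rm geo}-1$, recovering $N_{\rm geo}\le\min(N,N_f+1,N_m+1)$. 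Once this bookkeeping is in hand, the three listed cases each supply the exactness conditions on $\fnt{E}$ and $\fnt{E}_{fm}$ needed in the face and volume blocks, and conservation follows.
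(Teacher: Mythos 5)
Your proposal is correct and takes essentially the same route as the paper: the SBP property via the skew splitting, Lemma~\ref{lemma:mhsbp}, commutation of diagonal blocks, and Nanson's formula (\ref{eq:nJ_Gnhat}); and conservation via $\hat{\fnt{Q}}_{j,m}\fnt{1}=\fnt{0}$, degree-$N$ exactness of $\fnt{E}$ and $\fnt{E}_{mf}$, Lemma~\ref{lemma:Efm} for the $\fnt{E}_{fm}$ term, the same case-by-case trace-degree bookkeeping for $g_{ij}$, and the discrete GCL (\ref{eq:dgcl}) for the volume term. The only difference is presentational: you verify the volume, face, and mortar block rows of $\sum_j \hat{\fnt{Q}}_{j,m}\fnt{g}_{ij}$ directly, whereas the paper premultiplies by the test block and uses the correction-term identity (\ref{eq:mhsbp_correction}) --- if anything, your blockwise argument establishes the full block-vector statement $\fnt{Q}_{i,m}\fnt{1}=\fnt{0}$ slightly more explicitly.
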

\begin{proof}
Using the SBP property of $\hat{\fnt{Q}}_{i,m}$, proving the SBP property follows the same steps as the proof of Lemma~\ref{lemma:Qhprops}.  
Expanding out $\fnt{Q}_{i,m} + \fnt{Q}_{i,m}^T$ yields
\[
\frac{1}{2}\sum_{j=1}^d \diag{\fnt{g}_{ij}}\hat{\fnt{Q}}_{j,m} + \hat{\fnt{Q}}_{j,m}^T\diag{\fnt{g}_{ij}} + \hat{\fnt{Q}}_{j,m} \diag{\fnt{g}_{ij}} +  \diag{\fnt{g}_{ij}}\hat{\fnt{Q}}_{j,m}^T.
\]
For each term in the sum, we use the SBP property of $\hat{\fnt{Q}}_{j,m}$.  The volume terms cancel, leaving only surface terms
\[
\frac{1}{2}\sum_{j=1}^d \LRp{\begin{bmatrix}
\fnt{0} & &\\
& \fnt{0} &\\
& & \hat{\fnt{B}}_{j,m} \end{bmatrix}\diag{\fnt{g}_{ij}} +  \diag{\fnt{g}_{ij}}\begin{bmatrix}
\fnt{0} & &\\
& \fnt{0} &\\
& & \hat{\fnt{B}}_{j,m} \end{bmatrix}} = \begin{bmatrix}
\fnt{0} & &\\
& \fnt{0} &\\
& & \fnt{B}_{i,m} \end{bmatrix}.
\]
where we have used (\ref{eq:nJ_Gnhat}) and that $\hat{\fnt{B}}_{j,m}$ and $\diag{\fnt{g}_{ij}}$ are diagonal matrices.  

%To show $\bm{Q}_{i,m}\bm{1} = \fnt{0}$, we note that
%\[
%\bm{Q}_{i,m}\bm{1} = \frac{1}{2}\sum_{j=1}^d \diag{\bm{g}_{ij}}\hat{\bm{Q}}_{j,m}\bm{1} + \hat{\bm{Q}}_{j,m} \diag{\bm{g}_{ij}}\bm{1} = \frac{1}{2}\sum_{j=1}^d \hat{\bm{Q}}_{j,m} \bm{g}_{ij}
%\]
%since $\hat{\bm{Q}}_{j,m}\bm{1} = \fnt{0}$ by Lemma~\ref{lemma:hsbprefprops}.  Note that, since $\bm{g}_{ij}$ is constructed using polynomial interpolation, $\bm{g}_{ij} = \LRs{g_{ij}(\hat{\bm{x}}), \quad \bm{E}g_{ij}(\hat{\bm{x}})}^T$, where $g_{ij}(\hat{\bm{x}})$ denotes the values of $g_{ij}$ at volume points.  The remaining term can be expanded out
%\begin{align*}
%\sum_{j=1}^d \hat{\bm{Q}}_{j,m} \bm{g}_{ij} &= \sum_{j=1}^d \begin{bmatrix} \hat{\bm{Q}}_jg_{ij}(\hat{\bm{x}}) - \hat{\bm{Q}}_j^Tg_{ij}(\hat{\bm{x}}) + \bm{E}^T\bm{B}\bm{E}g_{ij}(\hat{\bm{x}})\\
%\bm{B}\bm{E}g_{ij}(\hat{\bm{x}}) - \bm{B}\bm{E}g_{ij}(\hat{\bm{x}})
%\end{bmatrix} \\
%&=
%\sum_{j=1}^d \begin{bmatrix} \LRp{- \hat{\bm{Q}}_j^T + \bm{E}^T\bm{B}\bm{E}}g_{ij}(\hat{\bm{x}})\\
%\fnt{0}
%\end{bmatrix} = 
%\sum_{j=1}^d \begin{bmatrix} \hat{\bm{Q}}_jg_{ij}(\hat{\bm{x}})\\
%\fnt{0}
%\end{bmatrix}  = \fnt{0},
%\end{align*}
%where we have used  the generalized SBP property and that $\sum_{j=1}^d\hat{\bm{Q}}_jg_{ij}(\hat{\bm{x}}) = \fnt{0}$, since $g_{ij}$ satisfies the discrete GCL (\ref{eq:dgcl}), $g_{ij}(\hat{\bm{x}})$ is a polynomial of degree less than or equal to $N$, and $\hat{\bm{Q}}_j$ exactly differentiates polynomials of degree $N$.  

To show conservation, expanding out $\fnt{Q}_{i,m}\fnt{1} $ yields
\[
\fnt{Q}_{i,m}\fnt{1} = \frac{1}{2}\sum_{j=1}^d \diag{\fnt{g}_{ij}}\hat{\fnt{Q}}_{j,m}\fnt{1} + \hat{\fnt{Q}}_{j,m} \diag{\fnt{g}_{ij}}\fnt{1} = \frac{1}{2}\sum_{j=1}^d\hat{\fnt{Q}}_{j,m} \fnt{g}_{ij}
\]
since $\hat{\fnt{Q}}_{j,m}\fnt{1} = \fnt{0}$ by Lemma~\ref{lemma:mhsbp}.  Expanding out remaining terms using (\ref{eq:mhsbp_correction}) yields
\begin{align*}
\sum_{j=1}^d \hat{\fnt{Q}}_j g_{ij}(\hat{\fnt{x}}) &+ \frac{1}{2} \fnt{E}^T\hat{\fnt{B}}_{j,f}\LRp{g_{ij}(\hat{\fnt{x}}_f) - \fnt{E}g_{ij}(\hat{\fnt{x}})} \\
&+ \frac{1}{2} \fnt{E}^T\hat{\fnt{B}}_{j,f}\LRp{\fnt{E}_{fm}g_{ij}(\hat{\fnt{x}}_m) - \fnt{E}g_{ij}(\hat{\fnt{x}})} \\
&+ \frac{1}{2} \fnt{E}^T\fnt{E}_{mf}^T\hat{\fnt{B}}_{j,m}\LRp{g_{ij}(\hat{\fnt{x}}_m) - \fnt{E}_{mf}g_{ij}(\hat{\fnt{x}}_f)}.
\end{align*}
where $g_{ij}(\hat{\fnt{x}}), g_{ij}(\hat{\fnt{x}}_f), g_{ij}(\hat{\fnt{x}}_m)$ denote the values of $g_{ij}$ at volume, surface, and mortar points.  By the fact that $g_{ij} \in Q^N$ and $\fnt{E}, \fnt{E}_{mf}$ are degree $N$ interpolation operators, the first and third correction terms vanish.  The remaining boundary correction terms are
\begin{equation}
\fnt{E}^T\hat{\fnt{B}}_{j,f}\LRp{\fnt{E}_{fm}g_{ij}(\hat{\fnt{x}}_m) - \fnt{E}g_{ij}(\hat{\fnt{x}})}.  
\label{eq:bcor}
\end{equation}
Recall that $\hat{\fnt{B}}_{i,f}$ is a diagonal matrix whose entries are $\hat{\fnt{n}}_{i,m}$ scaled by the mortar quadrature weights.  On tensor product elements, $\hat{\fnt{n}}_{i,m}=\pm 1$ on faces where $\hat{x}_i = \pm 1$ and zero otherwise.  Thus, restricting $\hat{x}_1 = \pm 1$ yields the boundary values of $g_{11}$ on faces where the correction term (\ref{eq:bcor}) is non-zero.  We can now show that (\ref{eq:bcor}) vanishes for $i = j = 1$ (the cases of $i = 2,\ldots,d$ are similar).  
\begin{enumerate}
\item If $D^k$ is a quadrilateral element, $g_{11} \in Q^{N_{\rm geo},N_{\rm geo}-1}$, and the surface traces of $g_{11}$ are in the 1D trace space $P^{N_{\rm geo}-1}$.  
\item If $D^k$ is a hexahedral element and $g_{ij}$ is constructed using Approach 2 \cite{kozdon2018energy}, $g_{11} \in Q^{N_{\rm geo}, N_{\rm geo}-1,N_{\rm geo}-1}$, and the surface traces of $g_{11}$ are in the quadrilateral trace space $Q^{N_{\rm geo}-1,N_{\rm geo}-1}$.  
\end{enumerate}
Since $N_{\rm geo} \leq N_m+1$, by Lemma~\ref{lemma:Efm} and the assumption that $N_{\rm geo} \leq \min(N, N_f+1, N_m+1)$, $\fnt{E}_{fm}g_{ij}(\hat{\fnt{x}}_m) = \fnt{E}g_{ij}(\hat{\fnt{x}})$ and (\ref{eq:bcor}) vanishes.  

If Approach 1 \cite{kopriva2006metric} is used, then surface traces of $g_{11}$ are contained in $Q^{N_{\rm geo},N_{\rm geo}}$.  Then, (\ref{eq:bcor}) vanishes since $N_{\rm geo} \leq \min(N,N_f,N_m)$.  In both cases, the remaining terms vanish assuming that $g_{ij}$ satisfies the discrete GCL (\ref{eq:dgcl}).
\qed\end{proof}

An entropy conservative mortar formulation on a mapped element is then 
\begin{gather}
{\fnt{M}}\td{\fnt{u}_h}{t} + \begin{bmatrix} \fnt{I} \\ \fnt{E} \\ \fnt{E}_{mf}\fnt{E} \end{bmatrix}^T
\sum_{i=1}^d \LRp{2{\fnt{Q}}_{i,m} \circ \fnt{F}_i}\fnt{1} + \fnt{E}^T\fnt{E}_{mf}^T{\fnt{B}}_{i,m}\LRp{\fnt{f}_i^*-\bm{f}_i(\tilde{\fnt{u}}_m)} = 0 \label{eq:esdgm}\\
\tilde{\fnt{u}}_m = \bm{u}(\fnt{v}_m), \qquad
\fnt{v}_m = \fnt{E}_{mf}\fnt{v}_f, \qquad
\fnt{f}^*_i = \bm{f}_{i,S}\LRp{\tilde{\fnt{u}}_m^+,\tilde{\fnt{u}}_m},\nonumber
\end{gather}
Again, we assume geometric terms on curved elements are approximated using polynomials, and that the normals are constructed via (\ref{eq:nJ_Gnhat}) and polynomial interpolation.  

\begin{theorem}
Assuming continuity in time, the local formulation (\ref{eq:esdgm}) satisfies 
\begin{equation}
\fnt{1}^T\fnt{M}\td{S(\fnt{u}_h)}{t} + \sum_{i=1}^d\fnt{1}^T\fnt{B}_{i,m}\LRp{\fnt{v}_m^T\fnt{f}_i^* - \psi_i(\tilde{\fnt{u}}_m)} = 0.
\label{eq:localec}
\end{equation}
\end{theorem}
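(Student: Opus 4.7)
The plan is to mimic the standard entropy-conservation argument used for the conforming formulation (\ref{eq:esdg}), adapted to the block structure in which mortar quantities appear as a third component of the test vector. First, I would left-multiply (\ref{eq:esdgm}) by $\bm{v}(\fnt{u}_h)^T$. Because $\fnt{M}$ is diagonal and $\bm{v} = \partial S/\partial \bm{u}$, the chain rule immediately converts the time-derivative term into $\fnt{1}^T\fnt{M}\,\mathrm{d}S(\fnt{u}_h)/\mathrm{d}t$. Then I would absorb the tall rectangular block matrix into the test vector using $\fnt{v}_f = \fnt{E}\bm{v}(\fnt{u}_h)$ and $\fnt{v}_m = \fnt{E}_{mf}\fnt{v}_f$, producing $\tilde{\fnt{v}}^T = [\bm{v}(\fnt{u}_h)^T,\,\fnt{v}_f^T,\,\fnt{v}_m^T]$. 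By the entropy projection construction, $\bm{v}(\tilde{\fnt{u}}) = \tilde{\fnt{v}}$, so the indices of $\fnt{F}_i$ see entropy variables consistent with the conservative states used to evaluate $\bm{f}_{i,S}$.

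The core step is to evaluate $\tilde{\fnt{v}}^T(2\fnt{Q}_{i,m}\circ\fnt{F}_i)\fnt{1}$. I would split $2\fnt{Q}_{i,m} = (\fnt{Q}_{i,m}-\fnt{Q}_{i,m}^T) + (\fnt{Q}_{i,m}+\fnt{Q}_{i,m}^T)$ and invoke Lemma~\ref{lemma:Qmprops_3d} to identify the symmetric part with $\mathrm{diag}(\fnt{0},\fnt{0},\fnt{B}_{i,m})$. The symmetric piece, after taking the Hadamard product with $\fnt{F}_i$ and hitting with $\fnt{1}$, only activates diagonal entries of $\fnt{F}_i$, and by consistency $\bm{f}_{i,S}(\tilde{\bm{u}},\tilde{\bm{u}}) = \bm{f}_i(\tilde{\bm{u}})$; this contributes $\fnt{v}_m^T\fnt{B}_{i,m}\bm{f}_i(\tilde{\fnt{u}}_m)$. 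For the skew-symmetric piece, I would use the symmetry $\fnt{F}_{i,jk} = \fnt{F}_{i,kj}$ to rewrite $\sum_{j,k}(\fnt{Q}_{i,m}-\fnt{Q}_{i,m}^T)_{jk}\tilde{\fnt{v}}_j^T\fnt{F}_{i,jk} = \tfrac{1}{2}\sum_{j,k}(\fnt{Q}_{i,m}-\fnt{Q}_{i,m}^T)_{jk}(\tilde{\fnt{v}}_j-\tilde{\fnt{v}}_k)^T\fnt{F}_{i,jk}$, then apply the Tadmor entropy-conservation property to replace $(\tilde{\fnt{v}}_j-\tilde{\fnt{v}}_k)^T\fnt{F}_{i,jk}$ by $\psi_i(\tilde{\bm{u}}_j)-\psi_i(\tilde{\bm{u}}_k)$.

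At this point the sum telescopes into $\psi_i(\tilde{\fnt{u}})^T(\fnt{Q}_{i,m}-\fnt{Q}_{i,m}^T)\fnt{1}$. Since $\fnt{Q}_{i,m}\fnt{1} = \fnt{0}$ by Lemma~\ref{lemma:Qmprops_3d}, this equals $-\psi_i(\tilde{\fnt{u}})^T\fnt{Q}_{i,m}^T\fnt{1} = -\psi_i(\tilde{\fnt{u}})^T(\fnt{Q}_{i,m}+\fnt{Q}_{i,m}^T)\fnt{1}$, and the SBP boundary matrix collapses it to $-\fnt{1}^T\fnt{B}_{i,m}\psi_i(\tilde{\fnt{u}}_m)$. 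The last term of (\ref{eq:esdgm}) becomes $\fnt{v}_m^T\fnt{B}_{i,m}(\fnt{f}_i^*-\bm{f}_i(\tilde{\fnt{u}}_m))$ after pulling $\fnt{E}^T\fnt{E}_{mf}^T$ to the left. Adding the three contributions, the $\bm{f}_i(\tilde{\fnt{u}}_m)$ terms cancel exactly, leaving $\fnt{v}_m^T\fnt{B}_{i,m}\fnt{f}_i^* - \fnt{1}^T\fnt{B}_{i,m}\psi_i(\tilde{\fnt{u}}_m)$ per coordinate direction, which rewritten as $\fnt{1}^T\fnt{B}_{i,m}(\fnt{v}_m^T\fnt{f}_i^* - \psi_i(\tilde{\fnt{u}}_m))$ (using that $\fnt{B}_{i,m}$ is diagonal) yields (\ref{eq:localec}).

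I expect the only real subtlety to be bookkeeping the three block components correctly: volume, surface, and mortar, and verifying that $(\fnt{Q}_{i,m}+\fnt{Q}_{i,m}^T)\fnt{1}$ annihilates the volume and face entries of $\psi_i(\tilde{\fnt{u}})$ so that only mortar surface contributions survive. Everything else is a direct application of Lemma~\ref{lemma:Qmprops_3d} together with the three defining properties (consistency, symmetry, and Tadmor's conservation) of $\bm{f}_{i,S}$; no new accuracy hypotheses beyond those already secured for the mortar-based operators are needed.
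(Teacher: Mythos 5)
Your proposal is correct and takes essentially the same route as the paper's proof: test with the entropy variables, convert the time term via the chain rule and diagonality of $\fnt{M}$, split $2\fnt{Q}_{i,m}$ into symmetric and skew parts using Lemma~\ref{lemma:Qmprops_3d}, handle the boundary block by flux consistency, apply Tadmor's conservation property to the skew sum, and collapse the result to $-\fnt{1}^T\fnt{B}_{i,m}\psi_i(\tilde{\fnt{u}}_m)$ via $\fnt{Q}_{i,m}\fnt{1}=\fnt{0}$ and the SBP property, with the $\bm{f}_i(\tilde{\fnt{u}}_m)$ contributions cancelling against the interface term. The only cosmetic differences are that you retain the factor-$\frac{1}{2}$ symmetrization of the skew sum where the paper relabels indices to write it directly as $\sum_{jk}\LRp{\fnt{Q}_{i,m}}_{jk}\LRp{\fnt{v}_j-\fnt{v}_k}^T\bm{f}_{i,S}$, and that you make explicit the fact that the entropy projection guarantees $\bm{v}(\tilde{\fnt{u}})$ equals the stacked test vector, which the paper leaves implicit.
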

\begin{proof}
The main steps are algebraically very similar to proofs in \cite{chan2018discretely, chan2018efficient, chan2019skew}, with Lemma~\ref{lemma:Qmprops_3d} used in place of Lemma~\ref{lemma:Qhprops}.  We begin by testing (\ref{eq:esdg}) with the vector of entropy variables evaluated at nodal points $\bm{v}(\fnt{u}_h)$.  We apply the chain rule in time and use that $\fnt{M}$ is diagonal to show that 
\[
\bm{v}(\fnt{u}_h)^T\fnt{M}\td{\fnt{u}_h}{t} = \fnt{1}^T\fnt{M}\LRp{\bm{v}(\fnt{u}_h)\circ \td{\fnt{u}_h}{t}} =  \fnt{1}^T\fnt{M}\td{S(\fnt{u}_h)}{t}.
\]
For the volume term, we use the SBP property (Lemma~\ref{lemma:Qmprops_3d}) to arrive at
\begin{align*}
&\bm{v}(\fnt{u}_h)^T\begin{bmatrix} \fnt{I} \\ \fnt{E} \\ \fnt{E}_{mf}\fnt{E} \end{bmatrix}^T\LRp{2\fnt{Q}_{i,m} \circ \fnt{F}_i}\fnt{1} = \\
&\fnt{v}^T\LRp{\LRp{\fnt{Q}_{i,m}-\fnt{Q}_{i,m}^T} \circ \fnt{F}_i}\fnt{1}  + \LRp{\fnt{E}\; \bm{v}(\fnt{u}_h)}^T\LRp{\begin{bmatrix}\fnt{0} && \\ & \fnt{0} & \\ & & \fnt{B}_{i,m}\end{bmatrix} \circ\fnt{F}_i}\fnt{1},
\end{align*}
where, again
\[
\fnt{v} = \begin{bmatrix} \bm{v}(\fnt{u}_h) \\ \fnt{v}_f \\ \fnt{v}_m\end{bmatrix} = 
{\begin{bmatrix} \fnt{I} \\ \fnt{E} \\ \fnt{E}_{mf}\fnt{E} \end{bmatrix}\bm{v}(\fnt{u}_h)}.  
\]
Note that, by the consistency of the entropy conservative flux $\bm{f}_{i,S}$, the diagonal of $\fnt{F}_i$ is the flux evaluated at volume, surface, and mortar points. Thus, the latter term reduces to 
\[
\fnt{v}\LRp{\begin{bmatrix}\fnt{0} && \\ & \fnt{0} & \\ & & \fnt{B}_{i,m}\end{bmatrix} \circ\fnt{F}_i}\fnt{1} = \fnt{v}_m^T \fnt{B}_{i,m}\bm{f}_i(\tilde{\fnt{u}}_m) =  \fnt{1}^T \fnt{B}_{i,m}\LRp{\fnt{v}_m^T\bm{f}_i(\tilde{\fnt{u}}_m)}.
\]
where we have used that $\fnt{B}_{i,m}$ is diagonal.  The final part of the proof uses that 
\begin{align*}
\fnt{v}^T\LRp{\LRp{\fnt{Q}_{i,m}-\fnt{Q}_{i,m}^T} \circ \fnt{F}_i}\fnt{1} &= \sum_{jk} {\fnt{v}}_j \LRp{\fnt{Q}_{i,m}-\fnt{Q}_{i,m}^T}_{jk} \LRp{\fnt{F}_{i}}_{jk}\\
&= \sum_{jk} \LRp{\fnt{Q}_{i,m}}_{jk} \LRp{{\fnt{v}}_j-{\fnt{v}}_k}^T\bm{f}_{i,S}\LRp{\tilde{\fnt{u}}_j,\tilde{\fnt{u}}_k}\\
&= \sum_{jk} \LRp{\fnt{Q}_{i,m}}_{jk} \LRp{\psi_i(\tilde{\fnt{u}}_j)-\psi_i(\tilde{\fnt{u}}_k)} \\
&= \psi_i(\tilde{\fnt{u}})^T\fnt{Q}_{i,m}\fnt{1} - \fnt{1}^T\fnt{Q}_{i,m}\psi_i(\tilde{\fnt{u}})
\end{align*}
By Lemma~\ref{lemma:Qmprops_3d}, $\fnt{Q}_{i,m}\fnt{1} = \fnt{0}$.  The proof is completed by applying the SBP property (\ref{eq:hsbp}) to the remaining term, which reduces to $- \fnt{1}^T\fnt{Q}_{i,m}\psi_i(\tilde{\fnt{u}}) =-\fnt{1}^T\fnt{B}_{i,m}\psi_i(\tilde{\fnt{u}}_m)$.
\qed\end{proof}

Global entropy conservation can be derived by summing up (\ref{eq:localec}) over each element
\[
\sum_k \fnt{1}^T\fnt{M}^k\td{S(\fnt{u}^k_h)}{t} + \sum_k \sum_{i=1}^d \fnt{1}^T\fnt{B}^k_{i,m}\LRp{\LRp{\fnt{v}^k_m}^T\fnt{f}^{k,*}_i - \psi_i(\tilde{\fnt{u}}^k_m)} = 0,
\]
where have introduced the superscript $k$ to denote operators and local solutions on the $k$th element $D^k$.  The first term corresponds to the average entropy over the entire domain.  Assume without loss of generality a periodic domain, such that all interfaces are interior interfaces.  Exchanging interface contributions involving the numerical flux between neighboring elements yields 
\begin{align}
\sum_k &\sum_{i=1}^d \fnt{1}^T\fnt{B}^k_{i,m}\LRp{\LRp{\fnt{v}^k_m}^T\fnt{f}^{k,*}_i - \psi_i(\tilde{\fnt{u}}^k_m)} = \label{eq:ecinterfaces}\\
&\sum_k \sum_{i=1}^d \fnt{1}^T\fnt{B}^k_{i,m}\LRp{\frac{1}{2}\LRp{\fnt{v}^k_m- \fnt{v}^{k,+}_m }^T\fnt{f}^{k,*}_i - \psi_i(\tilde{\fnt{u}}^k_m)}\nonumber
\end{align}
where $\fnt{v}^{k,+}_m$ denotes the exterior value of the entropy variables on the neighboring element.  If the numerical flux is computed using the entropy conservative flux $\fnt{f}^{k,*}_i = \bm{f}_{i,S}\LRp{\tilde{\fnt{u}}^k_m,\tilde{\fnt{u}}^{k,+}_m}$, then 
\[
\LRp{\fnt{v}^k_m- \fnt{v}^{k,+}_m }^T\bm{f}_{i,S}\LRp{\tilde{\fnt{u}}^k_m,\tilde{\fnt{u}}^{k,+}_m} = \psi_i(\tilde{\fnt{u}}^k_m) - \psi_i(\tilde{\fnt{u}}^{k,+}_m).
\]
Returning the contributions $\psi_i(\tilde{\fnt{u}}^{k,+}_m)$ to the neighboring element simplifies (\ref{eq:ecinterfaces}) as follows
\begin{align*}
&\sum_k \sum_{i=1}^d \fnt{1}^T\fnt{B}^k_{i,m}\LRp{\frac{1}{2}\LRp{\fnt{v}^k_m- \fnt{v}^{k,+}_m }^T\fnt{f}^{k,*}_i - \psi_i(\tilde{\fnt{u}}^k_m)}\\
&= \sum_k \sum_{i=1}^d \fnt{1}^T\fnt{B}^k_{i,m}\LRp{\frac{1}{2}\LRp{\psi_i(\tilde{\fnt{u}}^k_m) - \psi_i(\tilde{\fnt{u}}^{k,+}_m) - \psi_i(\tilde{\fnt{u}}^k_m)}}\\
&= \sum_k \sum_{i=1}^d \fnt{1}^T\fnt{B}^k_{i,m}\LRp{\psi_i(\tilde{\fnt{u}}^k_m) - \psi_i(\tilde{\fnt{u}}^k_m)} = 0.
\end{align*}
When the domain is non-periodic, the interface contributions do not vanish at the boundary.   These boundary terms can be made non-negative (e.g., entropy conservative or entropy stable) if boundary conditions are imposed in an appropriate manner \cite{svard2014entropy, chen2017entropy}.

\subsection{Comparison of Lobatto and Gauss nodes: requirements for entropy conservation (stability) and accuracy}
\label{sec:compare}

This section summarizes theoretical differences between entropy conservative (stable) Lobatto and Gauss collocation schemes on non-conforming meshes based on Lemma~\ref{lemma:Qmprops_3d} and Lemma~\ref{lemma:accuracy}.  The main differences are that the use of Lobatto nodes induces additional requirements on the construction of geometric terms for curved meshes and lowers the expected order of accuracy by one degree.  

Entropy-stable high-order collocation schemes on tensor product elements typically utilize tensor product quadrature rules based on $(N+1)$-point Lobatto nodes \cite{carpenter2014entropy, gassner2016split} or $(N+1)$-point Gauss nodes \cite{chan2018efficient}.  On non-conforming meshes, the most natural choice of mortar nodes is a composite Lobatto or Gauss rule.  Recall that an $(N+1)$-point Lobatto rule is exact for polynomials of degree $(2N-1)$, while an $(N+1)$-point Gauss rule is exact for polynomials of degree $(2N+1)$.  

First, we discuss the expected accuracy of each scheme.  We have not conducted a rigorous error analysis; however, Lemma~\ref{lemma:accuracy} implies that a mortar-based SBP operator under Lobatto quadrature is exact for degree $(N-1)$ polynomials, while a mortar-based SBP operator under Gauss quadrature is exact for degree $N$ polynomials.  In other words, the order of accuracy of a Gauss collocation scheme can be up to one degree higher than for a Lobatto collocation scheme.  

We note that order reduction for Lobatto nodes is not consistently evident in numerical experiments, both for conforming \cite{chan2018efficient, hindenlang2019order} and non-conforming meshes \cite{fernandez2019entropy}.  In \cite{friedrich2017entropy}, order reduction with respect to the $L^2$ error was observed using Lobatto nodes on 2D non-conforming meshes.  However, in \cite{fernandez2019entropy}, $L^2$ order reduction is not observed using Lobatto nodes on 3D non-conforming hexahedral meshes, though order reduction is observed for the $L^\infty$ error.  Similar inconsistent behavior is observed for the 2D and 3D experiments in this paper when using Lobatto nodes.  We do note observe order reduction for Gauss nodes in any of our experiments.

Next, we discuss requirements on the construction of geometric terms.  On non-conforming quadrilateral meshes, Theorem~\ref{lemma:Qmprops_3d} implies that both Lobatto and Gauss collocation schemes are stable for $N_{\rm geo}\leq N$, i.e., for isoparametric curved mappings.  On non-conforming hexahedral meshes, Theorem~\ref{lemma:Qmprops_3d} implies that Gauss nodes are also stable for $N_{\rm geo}\leq N$, but that Lobatto nodes are stable only for $N_{\rm geo} \leq (N-1)$ if Approach 1 \cite{kopriva2006metric} is used to compute geometric terms.  

The restriction on $N_{\rm geo} \leq (N-1)$ for Lobatto nodes is not necessary for entropy conservation (stability) on conforming hexahedral meshes.  However, when using Approach 1 to compute geometric terms, the introduction of non-aligned mortar nodes at non-conforming curved interfaces induces this additional restriction.  Moreover, this condition is consistent with Theorem 3 in \cite{fernandez2019entropy}.  One can remove this constraint on the degree of the geometric mapping by using Approach 2 \cite{kozdon2018energy} to compute geometric terms.  If Approach 2 is used, then Lobatto collocation schemes remain entropy conservative (stable) on curved hexahedral meshes for all $N_{\rm geo} \leq N$.  Appendix~\ref{app:A} describes Approach 1 \cite{kopriva2006metric} and Approach 2 \cite{kozdon2018energy} in more detail, and demonstrates numerically that both approaches achieve similar high-order accuracy in approximating geometric terms.

\section{A mortar-based implementation}
\label{sec:mortarimplement}
While the formulation (\ref{eq:esdgm}) is convenient for analysis, it is less convenient to implement.  However, using properties of discretization matrices, we can show that the mortar-based formulation (\ref{eq:esdgm}) is equivalent to a conforming formulation (\ref{eq:esdg}) with a modified numerical flux involving face-local correction terms.  

First, we reformulate volume terms by incorporating geometric terms into the definition of the flux matrices.  Note that $\fnt{Q}_{i,m}$ can alternatively be defined as \cite{gassner2016split}
\[
\fnt{Q}_{i,m} = \sum_{j=1}^d \hat{\fnt{Q}}_{j,m} \circ \avg{\fnt{g}_{ij}}, \qquad \avg{\fnt{g}_{ij}}_{kl} = \frac{1}{2}\LRp{ \LRp{\fnt{g}_{ij}}_k + \LRp{\fnt{g}_{ij}}_l},
\]
where $k,l$ are indices over the total number of volume, surface, and mortar points, and $\fnt{g}_{ij}$ is the vector containing values of $g_{ij}$ at volume, surface, and mortar points.  
The volume term of (\ref{eq:esdgm}) can then be rewritten as
\begin{gather}
\begin{bmatrix} \fnt{I} \\ \fnt{E} \\ \fnt{E}_{mf}\fnt{E} \end{bmatrix}^T\sum_{i=1}^d \LRp{2{\fnt{Q}}_{i,m} \circ \fnt{F}_i}\fnt{1} = 
\begin{bmatrix} \fnt{I} \\ \fnt{E} \\ \fnt{E}_{mf}\fnt{E} \end{bmatrix}^T\sum_{i=1}^d \LRp{2\hat{\fnt{Q}}_{i,m} \circ \hat{\fnt{F}}_i}\fnt{1} \label{eq:Qmreform}\\
\hat{\fnt{F}}_i = \sum_{j=1}^d \avg{\fnt{g}_{ij}}\circ \fnt{F}_i.\nonumber
\end{gather}
We next decompose $\hat{\fnt{F}}_i$ into interactions between volume nodes, surface nodes, and mortar nodes
\[
\hat{\fnt{F}}_i = \begin{bmatrix}
\hat{\fnt{F}}_i^{vv} & \hat{\fnt{F}}_i^{vf} & \hat{\fnt{F}}_i^{vm}\\
\hat{\fnt{F}}_i^{fv} & \hat{\fnt{F}}_i^{ff} & \hat{\fnt{F}}_i^{fm}\\
\hat{\fnt{F}}_i^{mv} & \hat{\fnt{F}}_i^{mf} & \hat{\fnt{F}}_i^{mm}
\end{bmatrix}
\]
By expanding out the definition of the mortar-based SBP operator $\hat{\fnt{Q}}_{i,m}$ and using properties of the matrices $\hat{\fnt{B}}_{i,m}, \hat{\fnt{F}}_i$, (\ref{eq:Qmreform}) can be rewritten as
\begin{align*}
\begin{bmatrix} \fnt{I} \\ \fnt{E} \\ \fnt{E}_{mf}\fnt{E} \end{bmatrix}^T
&\LRp{\begin{bmatrix}
\hat{\fnt{Q}}_i-\hat{\fnt{Q}}_i^T & \fnt{E}^T\hat{\fnt{B}}_i &\\
-\hat{\fnt{B}}_i\fnt{E} &  & \hat{\fnt{B}}_{i}{\fnt{E}}_{fm} \\
& -\hat{\fnt{B}}_{i,m}{\fnt{E}}_{mf} & 
\end{bmatrix} \circ \hat{\fnt{F}}_i}\fnt{1} 
+ \fnt{E}^T\fnt{E}_{mf}^T \LRp{\hat{\fnt{B}}_{i,m} \circ \hat{\fnt{F}}_i^{mm}}\fnt{1} 
\\
=& 
\begin{bmatrix} \fnt{I} \\ \fnt{E} \end{bmatrix}^T
\LRp{\begin{bmatrix}
\hat{\fnt{Q}}_i-\hat{\fnt{Q}}_i^T  & \fnt{E}^T\hat{\fnt{B}}_i\\
-\hat{\fnt{B}}_i\fnt{E} & \hat{\fnt{B}}_i\\
\end{bmatrix} \circ \begin{bmatrix}
\hat{\fnt{F}}_i^{vv} & \hat{\fnt{F}}_i^{vf} \\
\hat{\fnt{F}}_i^{fv} & \hat{\fnt{F}}_i^{ff}
\end{bmatrix} }\fnt{1} \\
&+ \fnt{E}^T\LRp{\fnt{E}_{mf}^T \LRp{\hat{\fnt{B}}_{i,m} \circ \hat{\fnt{F}}_i^{mm}}\fnt{1} - \LRp{\hat{\fnt{B}}_i\circ \hat{\fnt{F}}^{ff}_i}\fnt{1} } \\
&+ \fnt{E}^T \LRp{\LRp{\hat{\fnt{B}}_i\fnt{E}_{fm}}\circ \hat{\fnt{F}}_i^{fm}}\fnt{1} - \fnt{E}^T\fnt{E}_{mf}^T \LRp{ \LRp{\hat{\fnt{B}}_{i,m}\fnt{E}_{mf}} \circ \hat{\fnt{F}}_i^{mf}}\fnt{1}.
\end{align*}
The above expressions can be simplified.  First, evaluating $\hat{\fnt{F}}_i^{mm},\hat{\fnt{F}}_i^{ff}$ yields
\[
\hat{\fnt{F}}_i^{mm} = \diag{\sum_{j=1}^d g_{ij}(\hat{\fnt{x}}_m) \circ \bm{f}_i(\tilde{\fnt{u}}_m)}, 
\quad 
\hat{\fnt{F}}_i^{ff} = \diag{\sum_{j=1}^d g_{ij}(\hat{\fnt{x}}_f) \circ \bm{f}_i(\tilde{\fnt{u}}_f)},
\]
Moreover, recall that the physical normals are defined as products of $g_{ij}$ with reference normals (\ref{eq:nJ_Gnhat}).  Since $\hat{\fnt{B}}_{i,m}, \hat{\fnt{B}}_{i}$ contain scaled normals on the reference element on the diagonal, we can rewrite the correction terms using the physical normal boundary matrices $\fnt{B}_{i,m}, \fnt{B}_i$
\begin{align*}
&\fnt{E}^T\LRp{\fnt{E}_{mf}^T \LRp{\hat{\fnt{B}}_{i,m} \circ \hat{\fnt{F}}_i^{mm}}\fnt{1} - \LRp{\hat{\fnt{B}}_i\circ \hat{\fnt{F}}^{ff}_i}\fnt{1} } \\
&= \fnt{E}^T\LRp{\fnt{E}_{mf}^T \fnt{B}_{i,m}\bm{f}_i(\tilde{\fnt{u}}_m) - \fnt{B}_{i}\bm{f}_i(\tilde{\fnt{u}}_f) }.
\end{align*}
The remaining correction terms can also be simplified by noting that $\fnt{E}_{fm}$ are block diagonal matrices, with each block corresponding to a face, and that the scaled reference normals (i.e., the diagonal entries of $\hat{\fnt{B}}_{i,m}, \hat{\fnt{B}}_{i}$) are constant over each face.  
%Define the physical flux matrices at surface-mortar nodes
%\[
%\bm{F}_i^{fm}, \qquad \bm{F}_i^{mf}
%\]
Then, using (\ref{eq:nJ_Gnhat}), we can rewrite the final set of correction terms as
\begin{align*}
&\fnt{E}^T \LRp{\LRp{\hat{\fnt{B}}_i\fnt{E}_{fm}}\circ \hat{\fnt{F}}_i^{fm}}\fnt{1} - \fnt{E}^T\fnt{E}_{mf}^T \LRp{ \LRp{\hat{\fnt{B}}_{i,m}\fnt{E}_{mf}} \circ \hat{\fnt{F}}_i^{mf}}\fnt{1}\\
& = \fnt{E}^T \diag{\fnt{w}_f}\LRp{{\fnt{E}_{fm}}\circ \fnt{F}_i^{fm} \circ \avg{\fnt{n}_i}_{fm}}\fnt{1} \\
&\qquad - \fnt{E}^T\fnt{E}_{mf}^T \diag{\fnt{w}_m} \LRp{ {\fnt{E}_{mf}} \circ {\fnt{F}}_i^{mf} \circ \avg{\fnt{n}_i}_{mf}}\fnt{1}\\
& = \fnt{E}^T \diag{\fnt{w}_f}\left(\LRp{{\fnt{E}_{fm}}\circ \fnt{F}_i^{fm} \circ \avg{\fnt{n}_i}_{fm}}\fnt{1} \right.\\
&\qquad \left. - \fnt{E}_{fm} \LRp{ {\fnt{E}_{mf}} \circ {\fnt{F}}_i^{mf} \circ \avg{\fnt{n}_i}_{mf}}\fnt{1}\right)
\end{align*}
where $\fnt{F}_i^{fm} = \LRp{\fnt{F}_i^{mf}}^T$ are the flux matrices whose entries are evaluations of $\bm{f}_{i,S}$ between solution values at mortar and surface nodes, and $\avg{\fnt{n}_i}_{fm} = \avg{\fnt{n}_i}_{mf}^T$ are matrices whose entries are arithmetic averages of $n_iJ_f$ (the $i$th component of the scaled normal vector) between each mortar and surface point. Note that we have used that $ \fnt{E}_{fm} = \diag{\fnt{w}_f}^{-1} \fnt{E}_{mf}^T\diag{\fnt{w}_m}$ in the final line.

Adding these correction terms back into the formulation (\ref{eq:esdgm}) yields 
\begin{gather}
{\fnt{M}}\td{\fnt{u}_h}{t} + \begin{bmatrix} \fnt{I} \\ \fnt{E} \end{bmatrix}^T
\sum_{i=1}^d \LRp{2{\fnt{Q}}_{i,h} \circ \fnt{F}_i}\fnt{1} + \fnt{E}^T\LRp{\fnt{f}_i^*- \fnt{B}_i\bm{f}_i(\tilde{\fnt{u}}_f)} = 0 \label{eq:esdgm_mortar}\\
\fnt{f}_i^* = \fnt{E}_{mf}^T{\fnt{B}}_{i,m}\bm{f}_{i,S}(\tilde{\fnt{u}}_m^+,\tilde{\fnt{u}}_m) + \fnt{\delta f}_i,\nonumber\\
\fnt{\delta f}_i = \diag{\fnt{w}_f}\LRp{ \LRp{{\fnt{E}_{fm}}\circ \fnt{F}_i^{fm} \circ \avg{\fnt{n}_i}_{fm}}\fnt{1} - \fnt{E}_{fm} \LRp{ {\fnt{E}_{mf}} \circ {\fnt{F}}_i^{mf} \circ \avg{\fnt{n}_i}_{mf}}\fnt{1}}.\nonumber
\end{gather}  
\begin{remark}
Note that $\fnt{\delta f}_i$ can be rewritten in terms of a Hadamard product 
\begin{align}
\fnt{\delta f}_i = 
\begin{bmatrix}
\fnt{I}\\
\fnt{E}_{mf}
\end{bmatrix}^T
\diag{\begin{bmatrix}
\fnt{w}_f  \\
\fnt{w}_m
\end{bmatrix}}
\LRp{\begin{bmatrix}
& \fnt{E}_{fm}\\
-\fnt{E}_{mf} &
\end{bmatrix}
\circ 
\begin{bmatrix}
\fnt{F}_i^{ff} & \fnt{F}_i^{fm} \\ \fnt{F}_i^{mf} & \fnt{F}_i^{mm}
\end{bmatrix}
\circ \avg{\fnt{n}_i} }\fnt{1}.
\label{eq:onemortarmat}
\end{align}
Here, the entries of $\avg{\fnt{n}_i}$ are averages of values of the normal vector at all pairs of surface and mortar points. Both matrices, $\avg{\fnt{n}_i}$ and the block of flux matrices, are $2\times 2$ block matrices, with blocks corresponding to pairs of surface-surface points, pairs of surface-mortar points, and pairs of mortar-mortar points.
\end{remark}

\begin{figure}
\centering
\includegraphics[width=.55\textwidth]{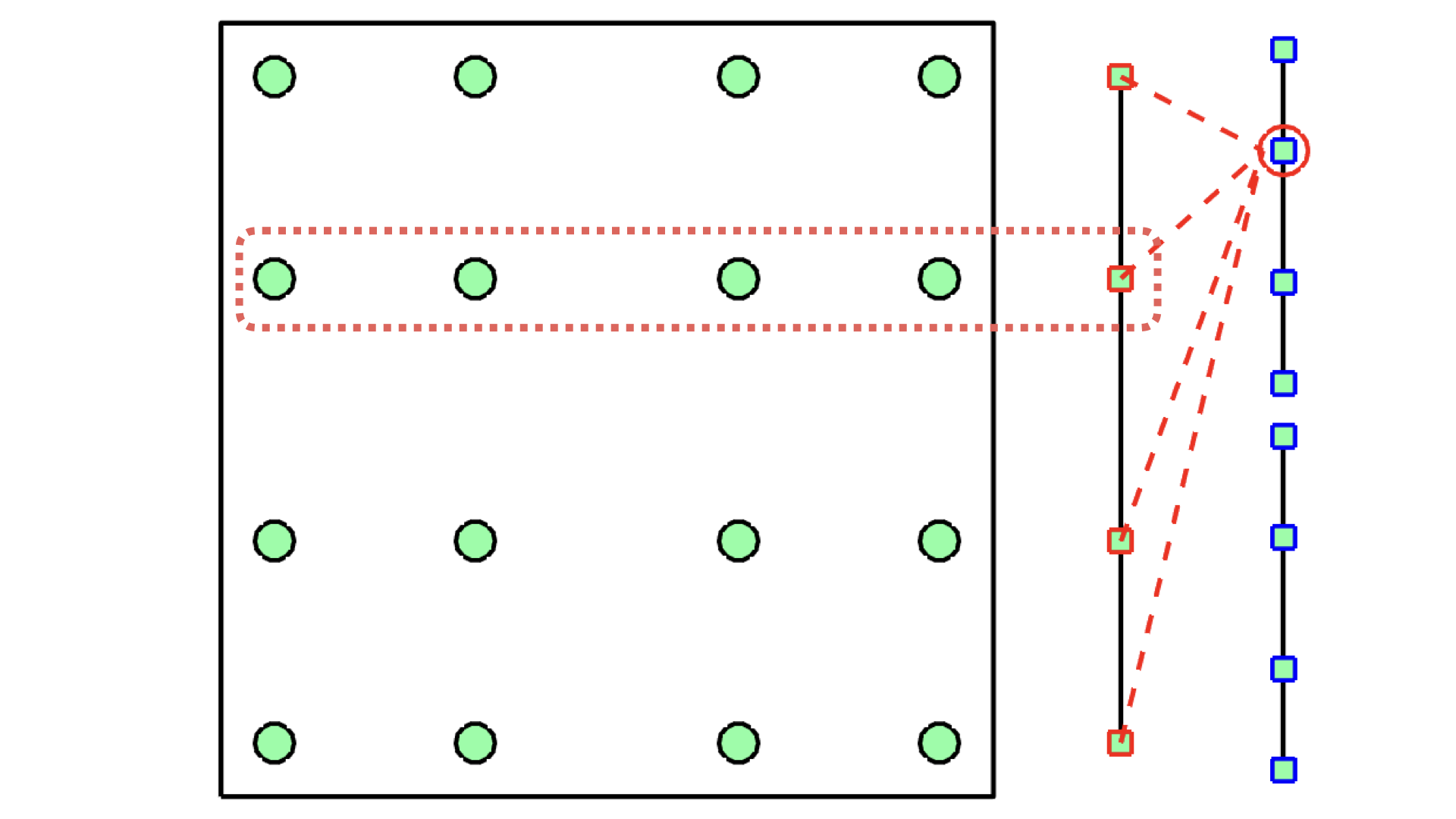}
\caption{Coupling between volume, surface, and mortar nodes for formulation (\ref{eq:esdgm_mortar}).}
\end{figure}
The formulation (\ref{eq:esdgm_mortar}) is nearly equivalent to the hybridized mortar formulation, with the main difference being the presence of skew-symmetric correction terms in the definition of the numerical flux.  Note that in this formulation, the only interactions between surface and mortar nodes occur within the computation of $\fnt{f}^*_i$, and involve only computations which are local to mortar interfaces.  
\begin{remark}
On conforming meshes, the surface and mortar nodes are the same.  Then,  
\[
\fnt{E}_{mf} = \fnt{E}_{fm} = \fnt{I}, \qquad \avg{\fnt{n}_i}_{fm} = \avg{\fnt{n}_i}_{mf}
\]
and the correction terms in the definition of $\fnt{f}^*_i$ cancel each other out.  The formulation (\ref{eq:esdgm_mortar}) then reverts to a Gauss collocation scheme \cite{chan2017discretely} on conforming meshes.  
\end{remark}

\subsection{Improving efficiency on three dimensional non-conforming hexahedral meshes}
\label{sec:anisotropic}

In 3D, the mortar-based approach enables a more efficient treatment of non-conforming hexahedral interfaces compared to existing approaches \cite{friedrich2017entropy}.  The cost of entropy-stable schemes based on flux differencing usually scales with the number of flux evaluations, which in turn scales by the number of non-zero entries present in differentiation and interpolation matrices.  For a hexahedral element, both Gauss and Lobatto node differentiation operators contain $O(N^4)$ non-zeros due to their tensor product (Kronecker) structure.  

\begin{figure}[!h]
\centering
\includegraphics[width=.5\textwidth]{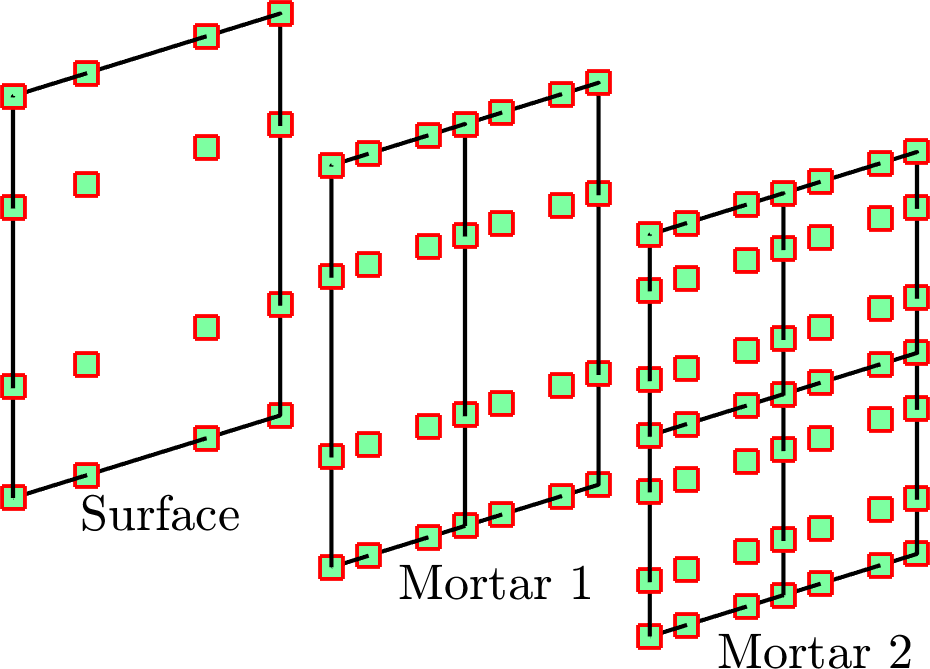}
\caption{The two-layer mortar setup for reduced $O(N^3)$ computational cost on non-conforming hexahedral interfaces.}
\label{fig:aniso}
\end{figure}

The interpolation matrices are generally dense with $O(N^4)$ non-zero entries, as they interpolate from $(N+1)^2$ surface points to $4(N+1)^2$ non-aligned mortar points (for an isotropically refined face).  However, we can expose sparsity by using a two-step interpolation from surface to mortar points.  Figure~\ref{fig:aniso} illustrates this process.  First, the $(N+1)^2$ surface points are interpolated to an anisotropically refined interface.  Because the points are aligned along one coordinate direction, this requires interpolation only along lines of nodes and results in an interpolation matrix with $O(N^3)$ non-zero entries.  The anisotropically refined nodes are then interpolated to the isotropically refined nodes.  Again, because nodes are aligned, the resulting interpolation matrix has $O(N^3)$ non-zero entries.  

%\begin{figure}[!h]
%\centering
%\subfloat[$\fnt{E}_{m_1,f}$]{\includegraphics[width=.2\textwidth]{E12.png}}
%\hspace{4em}
%\subfloat[$\fnt{E}_{m_2,m_1}$]{\includegraphics[width=.2045\textwidth]{E23.png}}
%\caption{Sparsity patterns of the interpolation operators $\fnt{E}_{m_1,f}, \fnt{E}_{m_2,f}$ for $N = 4$.}
%\label{fig:anisointerp}
%\end{figure}

This two-step procedure can be interpreted as a sparse factorization of the interpolation matrix from $(N+1)^2$ surface points to $4(N+1)^2$ points.  Let $\fnt{x}_f$ denote the surface points, and let $\fnt{x}_{m_1}, \fnt{x}_{m_2}$ denote the anisotropically and isotropically refined mortar nodes (e.g., the first and second mortar layers), respectively.  We first introduce $\fnt{E}_{\rm 1D}$ be the 1D polynomial interpolation matrix which maps from Gauss or Lobatto points to split (composite) Gauss or Lobatto points.  Let $\fnt{E}_{m_1,f}$ denote the interpolation operator from surface to nodes in the first mortar layer, and let $\fnt{E}_{m_2,m_1}$ denote the interpolation operator from nodes in the first mortar layer to the second mortar layer.  These operators are defined 
\[
\fnt{E}_{m_1,f} = \fnt{E}_{\rm 1D} \otimes \fnt{I}_{N+1}, \qquad \fnt{E}_{m_2,m_1} = \fnt{I}_{2(N+1)} \otimes \fnt{E}_{\rm 1D}.
\]
where $\fnt{I}_{N+1},\fnt{I}_{2(N+1)}$ denote the $(N+1)\times (N+1)$ and $2(N+1)\times 2(N+1)$ identity matrices, respectively.  Then, $\fnt{E}_{m_2,f} = \fnt{E}_{m_2,m_1}\fnt{E}_{m_1,f}$ interpolates from surface nodes to the second layer of mortar nodes.  However, the operators $\fnt{E}_{m_2,m_1}, \fnt{E}_{m_1,f}$ are sparse, while $\fnt{E}_{m_2,f}$ is completely dense.  

We can also define projection operators which transfer from the second to the first mortar layer and from the first mortar layer to surface nodes.  Let $\fnt{M}_{m_i} = \diag{\fnt{w}_{m_i}}$ denote the diagonal matrix containing quadrature weights $\fnt{w}_{m_i}$ for the $i$th layer of mortar nodes, respectively.  Then, we can define $\fnt{E}_{f,m_1},\fnt{E}_{m_1,m_2}$ via
\begin{align*}
\fnt{E}_{f,m_1} = \fnt{M}_f^{-1}\fnt{E}_{m_1,f}^T\fnt{M}_{m_1}\\
\fnt{E}_{m_1,m_2} = \fnt{M}_{m_1}^{-1}\fnt{E}_{m_2,m_1}^T\fnt{M}_{m_2}.
\end{align*}
The definitions of $\fnt{E}_{f,m_1}, \fnt{E}_{m_1,m_2}$ are analogous to the definition of $\fnt{E}_{fm}$ in (\ref{eq:Efm}).  The operator $\fnt{E}_{f,m_1}$ can be interpreted as projecting onto degree $N$ polynomials on a quadrilateral face, while $\fnt{E}_{m_1,m_2}$ can be interpreted as projecting onto the space of piecewise degree $N$ polynomials on the two quadrilateral faces of the first mortar layer in Figure~\ref{fig:aniso}.
%The definition of $\bm{E}_{m_1,m_2}$ is more complicated, and transfers between $\bm{x}_{m_1},\bm{x}_{m_2}$ by performing an $L^2$ projection onto the polynomial trace space of a hexahedron (using the second layer of mortar quadrature nodes $\bm{x}_{m_2}$), then interpolating to the first layer of mortar nodes $\bm{x}_{m_1}$.  

Putting these pieces together allows us to define two-layer mortar-based SBP operators.  We introduce boundary matrices 
\[
\hat{\fnt{B}}_{j,m_i} = \diag{\hat{\fnt{n}}_{j,m_i}}\hat{\fnt{M}}_{m_i}, \qquad i = 1,2, \qquad j = 1,\ldots, d.
\]
where $\hat{\fnt{n}}_{j,m_i}$ denote $j$th component of the scaled outward normals evaluated at the $i$th layer of mortar points.  Then, the two-layer mortar-based SBP operator is
\begin{equation}
\hat{\fnt{Q}}_{i,m} = \frac{1}{2}\begin{bmatrix}
\hat{\fnt{Q}}_i - \hat{\fnt{Q}}_i^T & \fnt{E}^T\hat{\fnt{B}}_{i,f} & & \\
-\hat{\fnt{B}}_{i,f}\fnt{E} & & \hat{\fnt{B}}_{i,f} \fnt{E}_{f,m_1} &\\
& -\hat{\fnt{B}}_{i,m_1} \fnt{E}_{m_1,f} & & \hat{\fnt{B}}_{i,m_1}\fnt{E}_{m_1,m_2} \\
&& -\hat{\fnt{B}}_{i,m_2} \fnt{E}_{m_2,m_1} & \hat{\fnt{B}}_{i,m_2} 
\end{bmatrix}.
\end{equation}
It is straightforward to show that, for both Gauss and Lobatto nodes, $\hat{\fnt{Q}}_{i,m}$ satisfies SBP and conservation properties
\[
\hat{\fnt{Q}}_{i,m} + \hat{\fnt{Q}}_{i,m}^T = \begin{bmatrix}
\fnt{0} & & &\\
& \fnt{0} & &\\ 
& & \fnt{0} & \\
& & & \fnt{B}_{i,m_2}
\end{bmatrix}, \qquad \hat{\fnt{Q}}_{i,m}\fnt{1} = \fnt{0}.
\]
The accuracy conditions for two-layer mortar operators are also identical to those of Lemma~\ref{lemma:accuracy}.  We can now define physical SBP operators via the formula (\ref{eq:curvedQ_mortar}).  These also satisfy SBP and conservation properties under the conditions of Lemma~\ref{lemma:Qmprops_3d}.  

Using the operators introduced so far, an entropy conservative formulation using two-layer mortar operators is given by  
\begin{align}
{\fnt{M}}\td{\fnt{u}_h}{t} &+ \begin{bmatrix} \fnt{I} \\ \fnt{E} \\ \fnt{E}_{m_1,f}\fnt{E} \\ \fnt{E}_{m_2,m_1}\fnt{E}_{m_1,f}\fnt{E} \end{bmatrix}^T
\sum_{i=1}^d \LRp{2{\fnt{Q}}_{i,m} \circ \fnt{F}_i}\fnt{1} \label{eq:twomortar}\\
&+ \fnt{E}^T \fnt{E}_{m_1,f}^T\fnt{E}_{m_2, m_1}^T{\fnt{B}}_{i,m_2}\LRp{\bm{f}_{i,S}(\tilde{\fnt{u}}_{m_2}^+,\tilde{\fnt{u}}_{m_2})-\bm{f}_i(\tilde{\fnt{u}}_{m_2})} = 0,\nonumber
\end{align}
where
\[
\tilde{\fnt{u}}_{m_2} = \bm{u}(\fnt{v}_{m_2}), \qquad 
\fnt{v}_{m_2} = \fnt{E}_{m_2,m_1} \fnt{E}_{m_1,f}\fnt{v}_f.
\]
These formulations look very similar to (\ref{eq:esdgm}).  The two-layer mortar formulation can also be reformulated in a way which involves only face-local correction terms to the numerical flux:
\begin{gather}
{\fnt{M}}\td{\fnt{u}_h}{t} + \begin{bmatrix} \fnt{I} \\ \fnt{E} \end{bmatrix}^T
\sum_{i=1}^d \LRp{2{\fnt{Q}}_{i,h} \circ \fnt{F}_i}\fnt{1} + \fnt{E}^T\LRp{\fnt{f}_i^*- \fnt{B}_i\bm{f}_i(\tilde{\fnt{u}}_f)} = 0\label{eq:twomortarmat}\\
\fnt{f}_i^* = \fnt{E}_{m_1,f}^T\fnt{E}_{m_2,m_1}^T{\fnt{B}}_{i,m_2}\bm{f}_{i,S}(\tilde{\fnt{u}}_{m_2}^+,\tilde{\fnt{u}}_{m_2}) + \fnt{\delta f}_i,\nonumber\\
\fnt{\delta f}_i = \begin{bmatrix}
\fnt{I}\\
\fnt{E}_{m_1,f}\\
\fnt{E}_{m_2,m_1}\fnt{E}_{m_1,f}
\end{bmatrix}^T
\diag{\begin{bmatrix}
\fnt{w}_f\\
\fnt{w}_{m_1}\\
\fnt{w}_{m_2}
\end{bmatrix}
}
\LRp{\begin{bmatrix}
& \fnt{E}_{f,m_1} & \\
-\fnt{E}_{m_1,f} & & \fnt{E}_{m_1,m_2}\\
& -\fnt{E}_{m_2,m_1}& 
\end{bmatrix}\circ \fnt{F}_i^{f,m_1,m_2} \circ \avg{\fnt{n}_i}}\fnt{1}\nonumber
\end{gather}
where $\fnt{\delta f}_i$ is a face-local correction term for the numerical flux, and the matrix $\fnt{F}_i^{f,m_1,m_2}$ is the matrix of entropy conservative flux evaluations between pairs of solution values at all surface and mortar nodes (e.g., surface nodes and nodes in the two mortar layers).  The term $\avg{\fnt{n}_i}$ denote the averaged scaled normals between each surface point and each point in the two mortar layers.  The derivation of this face-local formulation is similar to the derivation in Section~\ref{sec:mortarimplement}, and we omit it for brevity.  

\begin{figure}[!h]
\centering
\subfloat[One-mortar flux differencing matrix ($45\times 45$ with 648 non-zero entries)]{\includegraphics[width=.45\textwidth]{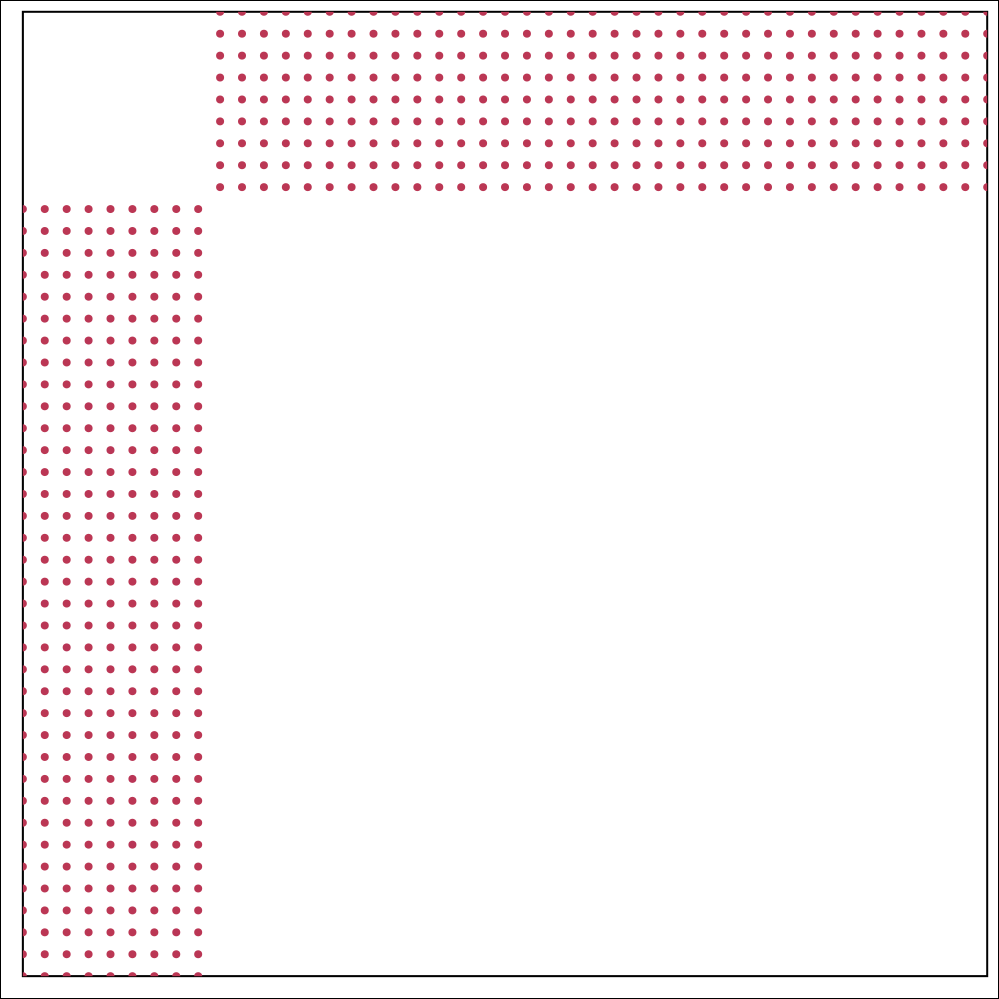}}
\hspace{1em}
\subfloat[Two-mortar flux differencing matrix ($63\times 63$ with 324 non-zero entries)]{\includegraphics[width=.45\textwidth]{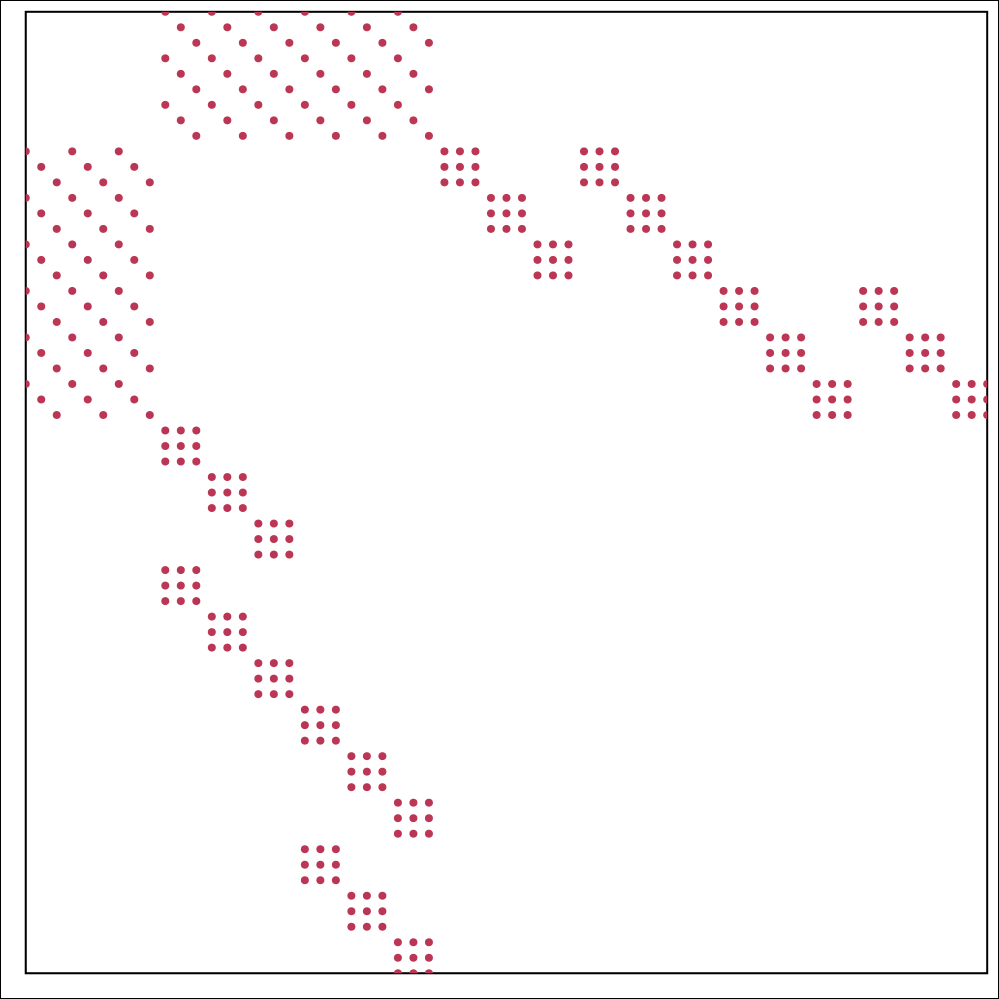}}
%\subfloat[Two-mortar flux differencing matrix (324 non-zero entries)]{\includegraphics[width=.45\textwidth]{spyTwoMortar.png}}
\caption{Sparsity pattern of flux differencing matrices used to define the mortar correction $\fnt{\delta f}_i$ for $N=2$.  The correction for the one-mortar case is given by (\ref{eq:onemortarmat}), while the correction for the two-mortar case is given by (\ref{eq:twomortarmat}).} 
\label{fig:spymortars}
\end{figure}

For both the one and two-mortar case, (\ref{eq:onemortarmat}) and (\ref{eq:twomortarmat}) define mortar corrections $\fnt{\delta f}_i$ in a way which mimics flux differencing formulated using the Hadamard product and the associated matrices
\begin{gather*}
\begin{bmatrix}
& \fnt{E}_{fm}\\
-\fnt{E}_{mf} &
\end{bmatrix} \quad \text{(one mortar)},\\
\begin{bmatrix}
& \fnt{E}_{f,m_1} & \\
-\fnt{E}_{m_1,f} & & \fnt{E}_{m_1,m_2}\\
& -\fnt{E}_{m_2,m_1}& 
\end{bmatrix} \quad \text{(two mortars)}.
\end{gather*}
The cost of flux differencing is proportional to the number of non-zero entries in each associated matrix, which favors the sparser flux differencing matrix for the two-mortar case.  Figure~\ref{fig:spymortars} shows the sparsity pattern of the one and two-mortar flux differencing matrices for $N=2$.  The size of the one-mortar flux differencing matrix is $45\times 45$ with 648 non-zero entries. While the two-mortar flux differencing matrix is slightly larger ($63\times 63$), it is significantly sparser and contains only 324 non-zero entries. As $N$ increases, the difference between the number of non-zero entries in the one and two-mortar flux differencing matrices becomes even more significant.

\section{Numerical experiments}
\label{sec:4}
This section presents numerical examples for the compressible Euler equations, which are given in $d$ dimensions as follows:
\begin{align*}
\pd{\rho}{t} &+ {\sum_{j=1}^d \pd{\LRp{\rho {u}_j}}{x_j}} = 0,\\
{\pd{\rho {u}_i}{t}} &+ {\sum_{j=1}^d \pd{\LRp{\rho u_iu_j + p\delta_{ij} }}{x_j}} = 0, \qquad i = 1,\ldots,d\\
\pd{E}{t} &+ {\sum_{j=1}^d \pd{\LRp{{u}_j(E+p)}}{x_j}} = 0.\nonumber
\end{align*}
Here, $\rho$ is density, $u_i$ denotes the $i$th component of the velocity, and $E$ is the total energy.  The pressure $p$ and specific internal energy $\rho e$ are given by 
\[
p = (\gamma-1)\LRp{E - \frac{1}{2}\rho \sum_{j=1}^d u_j^2}, \qquad 
\rho e = E - \frac{1}{2}\rho\sum_{j=1}^d u_j^2.
\]
We enforce the entropy conservation (stability) with respect to the unique entropy $S(\bm{u})$ for the viscous compressible Navier-Stokes equations \cite{hughes1986new} given by
\begin{equation*}
S(\bm{u}) = -\frac{\rho s}{\gamma-1}, 
\end{equation*}
where $s = \log\LRp{\frac{p}{\rho^\gamma}}$ is the specific entropy, and $d = 2,3$ is the dimension.  The entropy variables with respect to $S(\bm{u})$ are also derived in \cite{hughes1986new} 
\begin{align*}
v_1 = \frac{\rho e (\gamma + 1 - s) - E}{\rho e}, \qquad v_{1+ i}= \frac{\rho {{u}_i}}{\rho e}, \qquad v_{d+2} = -\frac{\rho}{\rho e}
\end{align*}
for $i = 1,\ldots, d$.  The inverse mapping is given by
\begin{align*}
\rho = -(\rho e) v_{d+2}, \qquad 
\rho {u_i} = (\rho e) v_{1+i}, \qquad 
 E = (\rho e)\LRp{1 - \frac{\sum_{j=1}^d{v_{1+j}^2}}{2 v_{d+2}}},
\end{align*}
where $i = 1,\ldots,d$, and $\rho e$ and $s$ in terms of the entropy variables are 
\begin{equation*}
\rho e = \LRp{\frac{(\gamma-1)}{\LRp{-v_{d+2}}^{\gamma}}}^{1/(\gamma-1)}e^{\frac{-s}{\gamma-1}}, \qquad 
s = \gamma - v_1 + \frac{\sum_{j=1}^d{v_{1+j}^2}}{2v_{d+2}}.
\end{equation*}

Entropy conservative fluxes involve both standard and logarithmic averages.  Let $f$ denote some piecewise continuous function, and let $f^+$ denote the exterior value of $f$ across an element face.  The average and logarithmic averages are
\[
{\avg{f} = \frac{f^+ + f}{2}}, \qquad {\avg{f}^{\log} = \frac{f^+ - f}{\log\LRp{f^+}-\log\LRp{f}}}.
\]
{The average and logarithmic average are assumed to act component-wise on vector-valued functions.  We evaluate the logarithmic average using the numerically stable algorithm of \cite{winters2019entropy}.}
Explicit expressions for entropy conservative numerical fluxes in two dimensions  are given by Chandrashekar \cite{chandrashekar2013kinetic}
\begin{align*}
&f^1_{1,S}(\bm{u}_L,\bm{u}_R) = \avg{\rho}^{\log} \avg{u_1},& &f^1_{2,S}(\bm{u}_L,\bm{u}_R) = \avg{\rho}^{\log} \avg{u_2},&\\
&f^2_{1,S}(\bm{u}_L,\bm{u}_R) = f^1_{1,S} \avg{u_1} + p_{\rm avg},&  &f^2_{2,S}(\bm{u}_L,\bm{u}_R) = f^1_{2,S} \avg{u_1},&\nonumber\\
&f^3_{1,S}(\bm{u}_L,\bm{u}_R) = f^2_{2,S},& &f^3_{2,S}(\bm{u}_L,\bm{u}_R) = f^1_{2,S} \avg{u_2} + p_{\rm avg},&\nonumber\\
&f^4_{1,S}(\bm{u}_L,\bm{u}_R) = \LRp{E_{\rm avg} + p_{\rm avg}}\avg{u_1},& &f^4_{2,S}(\bm{u}_L,\bm{u}_R) = \LRp{E_{\rm avg} + p_{\rm avg} }\avg{u_2},& \nonumber
\end{align*}
where we have introduced the auxiliary quantities $\beta = \frac{\rho}{2p}$ and  
\begin{gather}
p_{\rm avg} = \frac{\avg{\rho}}{2\avg{\beta}}, \qquad E_{\rm avg} = \frac{\avg{\rho}^{\log}}{2\avg{\beta}^{\log}\LRp{\gamma -1}}   + \frac{{u_{\rm avg}^2}}{2}, \qquad 
{u^2_{\rm avg}} = \sum_{j=1}^d u_j u_j^+ \label{eq:fluxaux}.
\end{gather}
where $u_j, u_j^+$ denote the trace and exterior values of $u_j$ across a face.  
Expressions for entropy conservative numerical fluxes for the three-dimensional compressible Euler equations can also be explicitly written as
\begin{gather*}
\bm{f}_{1,S} = \LRp{\begin{array}{c}
\avg{\rho}^{\log}\avg{u_1}\\
\avg{\rho}^{\log}\avg{u_1}^2 + p_{\rm avg}\\
\avg{\rho}^{\log}\avg{u_1}\avg{u_2}\\
\avg{\rho}^{\log}\avg{u_1}\avg{u_3}\\
(E_{\rm avg}+ p_{\rm avg})\avg{u_1}\\
\end{array}}, 
\qquad 
\bm{f}_{2,S} = \LRp{\begin{array}{c}
\avg{\rho}^{\log}\avg{u_2}\\
\avg{\rho}^{\log}\avg{u_1}\avg{u_2}\\
\avg{\rho}^{\log}\avg{u_2}^2 + p_{\rm avg}\\
\avg{\rho}^{\log}\avg{u_2}\avg{u_3}\\
(E_{\rm avg}+ p_{\rm avg})\avg{u_2}\\
\end{array}},\\
\bm{f}_{3,S} = \LRp{\begin{array}{c}
\avg{\rho}^{\log}\avg{u_3}\\
\avg{\rho}^{\log}\avg{u_1}\avg{u_3}\\
\avg{\rho}^{\log}\avg{u_2}\avg{u_3}\\
\avg{\rho}^{\log}\avg{u_3}^2 + p_{\rm avg}\\
(E_{\rm avg}+ p_{\rm avg})\avg{u_3}\\
\end{array}}. \nonumber
\end{gather*}
%where the auxiliary quantities are defined as
%\begin{gather*}
%{\beta = \frac{\rho}{2p}}, \qquad p_{\rm avg} = \frac{\avg{\rho}}{2\avg{\beta}}, \qquad E_{\rm avg} = \frac{\avg{\rho}^{\log}}{2(\gamma-1)\avg{\beta}^{\log}} + \frac{1}{2}\avg{\rho}^{\log}{{u}^2_{\rm avg}}.
%\end{gather*}

For all numerical experiments, we use an explicit low storage RK-45 time-stepper \cite{carpenter1994fourth} and estimate the timestep $dt$ using $J, J^k_f$, and degree-dependent $L^2$ trace constants $C_N$ 
\[
dt = C_{\rm CFL}\frac{ h}{ a C_N}, \qquad h = \frac{1}{{\nor{J^{-1}}_{L^{\infty}}}\nor{J^k_f}_{L^{\infty}}},
\]
where $a$ is an estimate of the maximum wave speed, $h$ estimates the mesh size, and $C_{\rm CFL}$ is some user-defined CFL constant.  For isotropic elements, the ratio of $J^k$ to $J^k_f$ scales as the mesh size $h$, while $C_N$ captures the dependence of the maximum stable timestep on the polynomial degree $N$.  For hexahedral elements, $C_N$ varies depending on the choice of quadrature.  It was shown in \cite{chan2015gpu} that 
\[
C_N =\begin{cases}
 d\frac{N(N+1)}{2} & \text{for GLL nodes}\\
d\frac{(N+1)(N+2)}{2} & \text{for Gauss nodes}
\end{cases}.
\]
Thus, based on this conservative estimate of the maximum stable timestep, GLL collocation schemes should be able to take a timestep which is roughly $(1 + 2/N)$ times larger than the maximum stable timestep for Gauss collocation schemes.  We do not account for this discrepancy in this work, and instead set the timestep for both GLL and Gauss collocation schemes based on the more conservative Gauss collocation estimate of $dt$.  All results are computed using $C_{\rm CFL} = 1/2$ and Lax-Friedrichs interface dissipation.  

Suppose $\fnt{u}_{\Omega}$ denotes the global solution vector; then, an entropy conservative DG scheme can be rewritten as
\[
\fnt{M}_{\Omega}\td{\fnt{u}_{\Omega}}{t} = \fnt{F}(\fnt{u}_{\Omega}).
\]
where $\fnt{M}_{\Omega}$ is the global mass matrix, and $\fnt{F}(\fnt{u}_{\Omega})$ denotes the global right hand side.  Let $\fnt{v}_{\Omega}$ denote the global vector of entropy variables.  The proof of entropy conservation implies that the ``spatial entropy'' $\fnt{v}_{\Omega}^T\fnt{F}(\fnt{u}_{\Omega}) = 0$ \cite{crean2018entropy}.   We have verified that both 2D and 3D implementations are entropy conservative (and thus entropy stable if interface dissipation is added) by computing the spatial entropy for a discontinuous initial condition on a curved mesh for $N = 1,\ldots, 4$.  In the absence of interface dissipation, all schemes yield a spatial entropy of magnitude $O\LRp{10^{-14}}$, which confirms that the implemented methods are semi-discretely entropy conservative to machine precision.

\subsection{Two-dimensional results}

We begin by examining high-order convergence of the proposed methods in two dimensions using the isentropic vortex problem \cite{shu1998essentially, crean2017high}.  The analytical solution is 
\begin{align}
\rho(\bm{x},t) &= \LRp{1 - \frac{\frac{1}{2}(\gamma-1)(\beta e^{1-r(\bm{x},t)^2})^2}{8\gamma \pi^2}}^{\frac{1}{\gamma-1}}, \qquad p = \rho^{\gamma},\\
u_1(\bm{x},t) &= 1 - \frac{\beta}{2\pi} e^{1-r(\bm{x},t)^2}(y-y_0), \qquad u_2(\bm{x},t) = \frac{\beta}{2\pi} e^{1-r(\bm{x},t)^2}(x-x_0-t),\nonumber
\end{align}
where $u_1, u_2$ are the $x$ and $y$ velocity, $r(\bm{x},t) = \sqrt{(x-x_0-t)^2 + (y-y_0)^2}$$x_0 = 5, y_0 = 0$, and $\beta = 5$.   We solve on a periodic rectangular domain $[0, 15] \times [-5,5]$ until final time $T=5$ and compute total $L^2$ errors over all solution fields.  

For a degree $N$ scheme, we approximate the $L^2$ error using an $(N+2)$ point Gauss quadrature rule.  We also examine the influence of element curvature for both GLL and Gauss collocation schemes by examining $L^2$ errors on affine and curvilinear meshes (see Figure~\ref{fig:mesh2d}).  These warpings are constructed by modifying nodal positions according to the following mapping
\begin{align*}
\tilde{x} &= x + L_x\alpha\cos\LRp{\frac{\pi}{L_x}\LRp{x-\frac{L_x}{2}}}\cos\LRp{\frac{3\pi}{L_y}y}, \\
\tilde{y} &= y + L_y\alpha\sin\LRp{\frac{4\pi}{L_x}\LRp{\tilde{x}-\frac{L_x}{2}}}\cos\LRp{\frac{\pi}{L_y}y},
\end{align*}
where $\alpha = 1/16$, and $L_x = 15, L_y = 10$ denote the lengths of the domain in the $x$ and $y$ directions, respectively.  

\begin{figure}
\centering
\includegraphics[width=.45\textwidth]{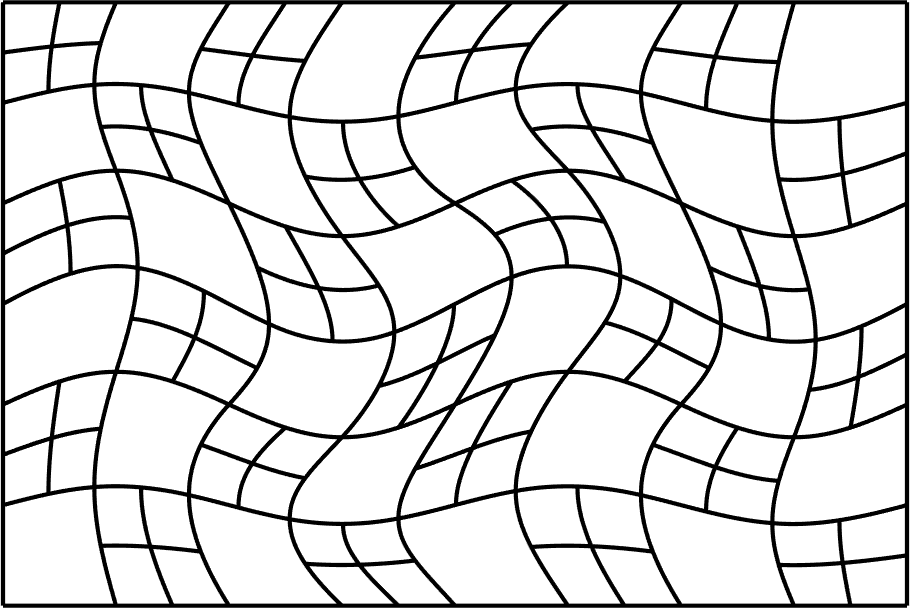}
\hspace{.5em}
\includegraphics[width=.45\textwidth]{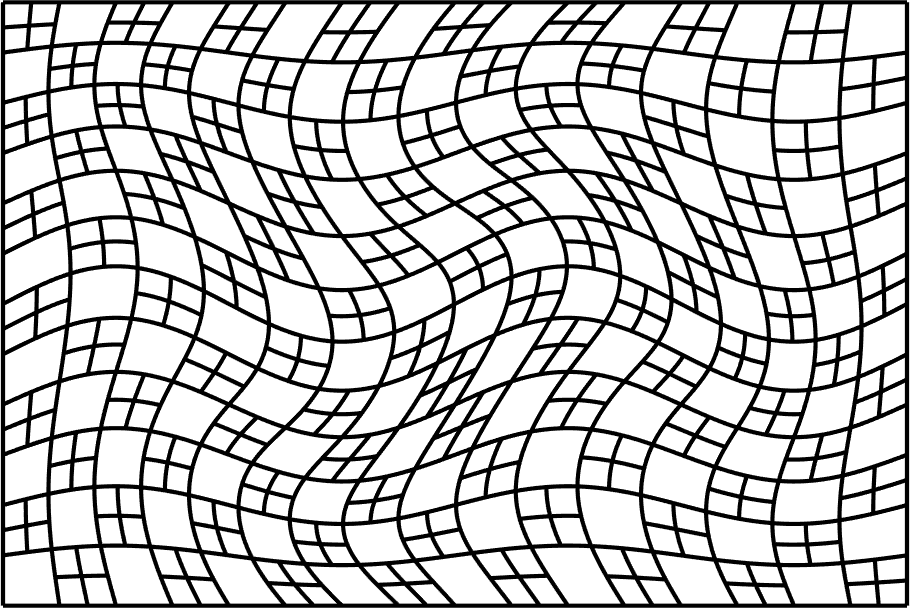}
\caption{Two curved $N=3$ meshes used in the grid refinement studies.}
\label{fig:mesh2d}
\end{figure}

Figure~\ref{fig:2d} shows results for the 2D compressible Euler equations on non-conforming quadrilateral meshes constructed by refining in a checkerboard pattern (see Figure~\ref{fig:mesh2d}), with volume, surface, and mortar quadratures constructed from one-dimensional Lobatto or Gauss quadrature rules.  As noted in Section~\ref{sec:compare} and Lemma~\ref{lemma:accuracy}, Gauss quadrature produces a degree $N$ differentiation operator, while Lobatto quadrature produces a degree $(N-1)$ differentiation operator.  We observe a similar discrepancy in $L^2$ errors for Lobatto and Gauss quadratures on non-conforming meshes in Figure~\ref{fig:2d}.

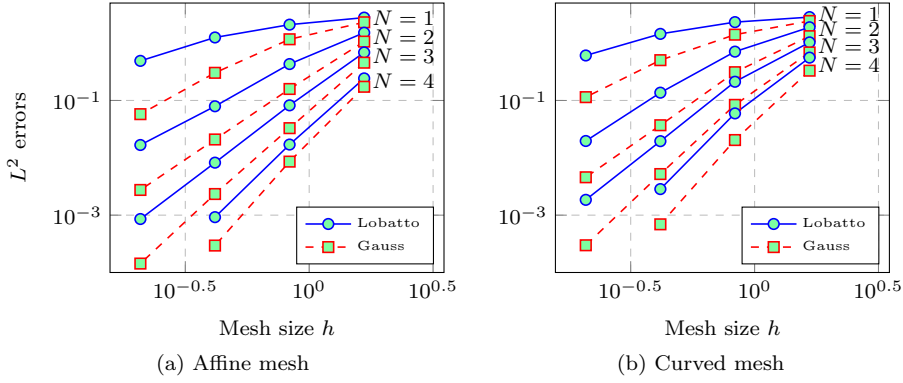
\begin{figure}
\centering
%\subfloat[Coarse $N=3$ non-conforming affine mesh]{\raisebox{2.6em}{\includegraphics[width=.475\textwidth]{mesh_affine.png}}}
\subfloat[Affine mesh]{
\begin{tikzpicture}
\begin{loglogaxis}[
    width=.49\textwidth,
    xlabel={Mesh size $h$},
    ylabel={$L^2$ errors}, 
    xmax=3.5,
    ymin=1e-4, ymax=5,
    legend pos=south east, legend cell align=left, legend style={font=\tiny},	
    xmajorgrids=true, ymajorgrids=true, grid style=dashed,
    legend entries={Lobatto, Gauss}%, Lobatto-Gauss}    
]
\pgfplotsset{
cycle list={{blue, mark=*}, {red, dashed ,mark=square*}}%,{black, dashdotted ,mark=triangle*}}
}
% N = 1
\addplot+[semithick, mark options={solid, fill=markercolor}]
coordinates{(1.66667,2.78317)(0.833333,2.09894)(0.416667,1.25478)(0.208333,0.490133)};
%\addplot+[semithick, mark options={solid, fill=markercolor}]
%coordinates{(1.66667,2.81566)(0.833333,2.2122)(0.416667,1.44626)(0.208333,0.801638)};
\addplot+[semithick, mark options={solid, fill=markercolor}]
coordinates{(1.66667,2.30772)(0.833333,1.16697)(0.416667,0.303428)(0.208333,0.0576807)};

% N = 2
\addplot+[semithick, mark options={solid, fill=markercolor}]
coordinates{(1.66667,1.51766)(0.833333,0.431437)(0.416667,0.0792017)(0.208333,0.0167253)};
%\addplot+[semithick, mark options={solid, fill=markercolor}]
%coordinates{(1.66667,1.52643)(0.833333,0.436466)(0.416667,0.0801575)(0.208333,0.0167043)};
\addplot+[semithick, mark options={solid, fill=markercolor}]
coordinates{(1.66667,1.06169)(0.833333,0.158534)(0.416667,0.0208737)(0.208333,0.00274974)};

% N = 3
\addplot+[semithick, mark options={solid, fill=markercolor}]
coordinates{(1.66667,0.68797)(0.833333,0.0821894)(0.416667,0.008229)(0.208333,0.000856128)};
%\addplot+[semithick, mark options={solid, fill=markercolor}]
%coordinates{(1.66667,0.685655)(0.833333,0.0815906)(0.416667,0.00838271)(0.208333,0.000889147)};
\addplot+[semithick, mark options={solid, fill=markercolor}]
coordinates{(1.66667,0.455651)(0.833333,0.0328805)(0.416667,0.00233608)(0.208333,0.000143112)};

% N = 4
\addplot+[semithick, mark options={solid, fill=markercolor}]
coordinates{(1.66667,0.242584)(0.833333,0.0170238)(0.416667,0.000922618)};
%\addplot+[semithick, mark options={solid, fill=markercolor}]
%coordinates{(1.66667,0.241312)(0.833333,0.0169965)(0.416667,0.000928106)};
\addplot+[semithick, mark options={solid, fill=markercolor}]
coordinates{(1.66667,0.173237)(0.833333,0.00858261)(0.416667,0.00029501)};

\node at (axis cs:2.4,2.8) {$N = 1$};
\node at (axis cs:2.4,1.3) {$N = 2$};
\node at (axis cs:2.4,.6) {$N = 3$};
\node at (axis cs:2.4,.22) {$N = 4$};
\end{loglogaxis}
\end{tikzpicture}
}
\hspace{.1em}
\subfloat[Curved mesh]{
\begin{tikzpicture}
\begin{loglogaxis}[
    width=.49\textwidth,
    xlabel={Mesh size $h$},
    xmax=3.5,
    ymin=1e-4, ymax=5,
    legend pos=south east, legend cell align=left, legend style={font=\tiny},	
    xmajorgrids=true, ymajorgrids=true, grid style=dashed,
    legend entries={Lobatto, Gauss}%, Lobatto-Gauss}    
]
\pgfplotsset{
cycle list={{blue, mark=*}, {red, dashed ,mark=square*}}%,{black, dashdotted ,mark=triangle*}}
}
% N = 1
\addplot+[semithick, mark options={solid, fill=markercolor}]
coordinates{(1.66667,2.84018)(0.833333,2.32278)(0.416667,1.44878)(0.208333,0.608642)};
%\addplot+[semithick, mark options={solid, fill=markercolor}]
%coordinates{(1.66667,2.8287)(0.833333,2.36062)(0.416667,1.61841)(0.208333,0.909621)};
\addplot+[semithick, mark options={solid, fill=markercolor}]
coordinates{(1.66667,2.42938)(0.833333,1.40381)(0.416667,0.504184)(0.208333,0.114213)};

% N = 2
\addplot+[semithick, mark options={solid, fill=markercolor}]
coordinates{(1.66667,1.88997)(0.833333,0.710146)(0.416667,0.136728)(0.208333,0.0196411)};
%\addplot+[semithick, mark options={solid, fill=markercolor}]
%coordinates{(1.66667,1.88857)(0.833333,0.709214)(0.416667,0.135323)(0.208333,0.0196621)};
\addplot+[semithick, mark options={solid, fill=markercolor}]
coordinates{(1.66667,1.32118)(0.833333,0.310973)(0.416667,0.0369794)(0.208333,0.00457492)};

% N = 3
\addplot+[semithick, mark options={solid, fill=markercolor}]
coordinates{(1.66667,1.03021)(0.833333,0.211902)(0.416667,0.0195018)(0.208333,0.00184924)};
%\addplot+[semithick, mark options={solid, fill=markercolor}]
%coordinates{(1.66667,1.03557)(0.833333,0.211255)(0.416667,0.0196214)(0.208333,0.0018744)};
\addplot+[semithick, mark options={solid, fill=markercolor}]
coordinates{(1.66667,0.6875)(0.833333,0.0837258)(0.416667,0.0052072)(0.208333,0.000298318)};

% N = 4
\addplot+[semithick, mark options={solid, fill=markercolor}]
coordinates{(1.66667,0.558299)(0.833333,0.0594828)(0.416667,0.00284697)};
%\addplot+[semithick, mark options={solid, fill=markercolor}]
%coordinates{(1.66667,0.557251)(0.833333,0.0593767)(0.416667,0.00284163)};
\addplot+[semithick, mark options={solid, fill=markercolor}]
coordinates{(1.66667,0.330743)(0.833333,0.0204084)(0.416667,0.000691875)};

\node at (axis cs:2.4,3.25) {$N = 1$};
\node at (axis cs:2.4,1.75) {$N = 2$};
\node at (axis cs:2.4,.9) {$N = 3$};
\node at (axis cs:2.4,.425) {$N = 4$};
\end{loglogaxis}
\end{tikzpicture}
}
\caption{$L^2$ errors for Lobatto and Gauss collocation schemes on affine and curved meshes under $h$-refinement. }
\label{fig:2d}
\end{figure}

\subsection{Three-dimensional results}

We test the accuracy of the mortar-based scheme in 3D using an extruded 2D vortex propagating in the $y$ direction, with analytic expressions given in \cite{williams2013nodal}
\begin{align*}
\rho(\bm{x},t) &= \LRp{1-\frac{(\gamma-1)}{2}\Pi^2}^{\frac{1}{\gamma-1}}\\
u_i(\bm{x},t) &= \Pi r_i + \delta_{i2}, \\
E(\bm{x},t) &= \frac{p_0}{\gamma-1}\LRp{1-\frac{\gamma-1}{2}\Pi^2}^{\frac{\gamma}{\gamma-1}} + \frac{\rho}{2}\sum_{i=1}^d u_i^2.
\end{align*}
where $(u_1,u_2,u_3)^T$ is the three-dimensional velocity vector and % $\Pi$, $\bm{r}$ are defined as
\[
\Pi = \Pi_{\max}e^{\frac{1-\sum_{i=1}^dr_i^2}{2}}, \qquad \begin{pmatrix}r_1\\r_2\\r_3\end{pmatrix} = \begin{pmatrix}
-(x_2-c_2-t)\\
x_1-c_1\\
0
\end{pmatrix}.
\]
We take {$c_1 = c_2 = 7.5$}, $p_0 = {1}/{\gamma}$, and $\Pi_{\max} = 0.4$, and solve on the domain $[0,15]\times [0,20]\times [0,1]$.  We construct a non-conforming mesh by gluing together two uniform hexahedral meshes of different resolutions (see Figure~\ref{fig:noncon3dmesh}).  The non-conforming interface runs parallel to the $y$-axis, such that the vortex propagates along the direction of the non-conforming interface.  The solution is evolved until final time $T=1$.

\begin{figure}
\centering
\includegraphics[width=.65\textwidth]{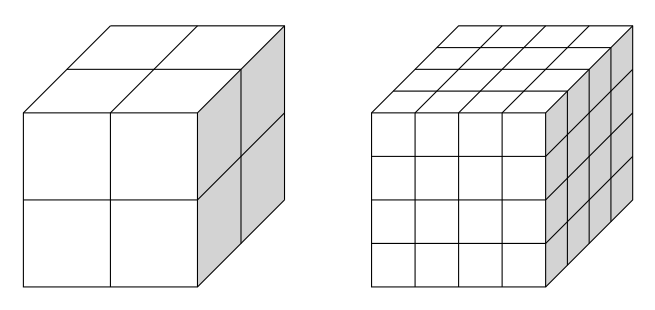}
\caption{Construction of a non-conforming hexahedral mesh by combining two conforming Cartesian meshes. }
\label{fig:noncon3dmesh}
\end{figure}

First, we confirmed that the use of two mortar layers does not impact the accuracy of the scheme by computing computed errors using both one and two mortar layers.  For $N=1,2,3$, the $L^2$ and $L^\infty$ errors for the one and two-mortar cases agree up to the $8$th significant digit on a sequence of four uniformly refined meshes, and we conclude that any differences in accuracy between the one and two-mortar cases are negligible.  

Figure~\ref{fig:3d_affine} shows computed estimates of $L^2$ and $L^{\infty}$ errors, where we estimate both by evaluating the solution using a tensor product Gauss quadrature rule with $N+3$ points in each coordinate direction.  For the $L^2$ error, we observe that Gauss quadrature is more accurate than Lobatto quadrature.  However, Gauss quadrature delivers significantly more accurate $L^\infty$ errors than Lobatto quadrature, which is consistent with numerical experiments in \cite{fernandez2019entropy}.  

Finally, we discuss computed $L^2$ convergence rates.  For $N=1$, we observe convergence rates of $1.365$ and $1.939$ for Lobatto and Gauss, respectively.  This order reduction is less prominent for $N=3$, with Lobatto and Gauss quadratures achieving rates of $3.468$ and $3.977$, respectively.  Order reduction is observed for both Gauss and Lobatto nodes for $N=2$, with both choices achieving convergence rates of around $2.3$.  This may indicate that the meshes have not yet been sufficiently refined to reach the asymptotic regime, or may be a consequence of the use of Lax-Friedrichs penalization \cite{hindenlang2019order}.

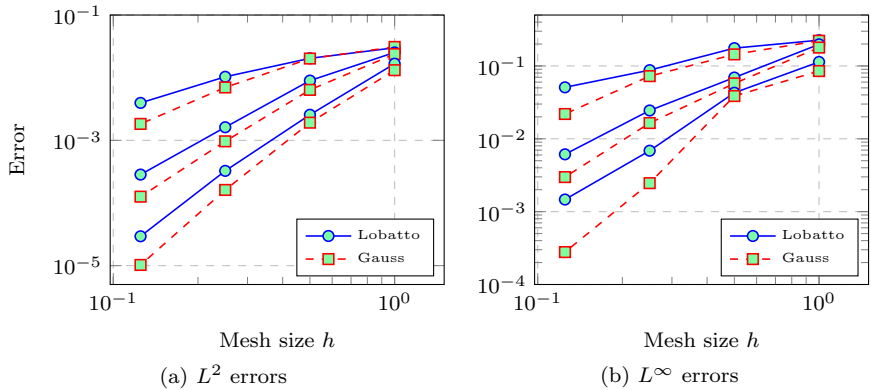
\begin{figure}
\centering
%\subfloat[Coarse $N=3$ non-conforming affine mesh]{\raisebox{2.6em}{\includegraphics[width=.475\textwidth]{mesh_affine.png}}}
\subfloat[$L^2$ errors]{
\begin{tikzpicture}
\begin{loglogaxis}[
    width=.49\textwidth,
    xlabel={Mesh size $h$},
    ylabel={Error}, 
    xmax=1.5,
    ymin=5e-6, ymax=.1,
    legend pos=south east, legend cell align=left, legend style={font=\tiny},	
    xmajorgrids=true, ymajorgrids=true, grid style=dashed,
    legend entries={Lobatto, Gauss}%, Lobatto-Gauss}    
]
\pgfplotsset{
cycle list={{blue, mark=*}, {red, dashed ,mark=square*}}%,{black, dashdotted ,mark=triangle*}}
}
% N = 1
\addplot+[semithick, mark options={solid, fill=markercolor}]
coordinates{(1,0.0300089)(0.5,0.02037)(0.25,0.0103349)(0.125,0.00395744)};
\addplot+[semithick, mark options={solid, fill=markercolor}]
coordinates{(1,0.0309619)(0.5,0.0201996)(0.25,0.00698411)(0.125,0.0018293)};

% N = 2
\addplot+[semithick, mark options={solid, fill=markercolor}]
coordinates{(1,0.0252826)(0.5,0.00898116)(0.25,0.00161751)(0.125,0.000284006)};
\addplot+[semithick, mark options={solid, fill=markercolor}]
coordinates{(1,0.0237929)(0.5,0.00638897)(0.25,0.000964247)(0.125,0.000126189)};

% N = 3
\addplot+[semithick, mark options={solid, fill=markercolor}]
coordinates{(1,0.0167185)(0.5,0.00255939)(0.25,0.000325533)(0.125,2.94146e-05)};
\addplot+[semithick, mark options={solid, fill=markercolor}]
coordinates{(1,0.0131715)(0.5,0.0019141)(0.25,0.000161528)(0.125,1.02566e-05)};

\node at (axis cs:2.4,2.8) {$N = 1$};
\node at (axis cs:2.4,1.3) {$N = 2$};
\node at (axis cs:2.4,.6) {$N = 3$};
\end{loglogaxis}
\end{tikzpicture}
}
\hspace{.1em}
\subfloat[$L^{\infty}$ errors]{
\begin{tikzpicture}
\begin{loglogaxis}[
    width=.49\textwidth,
    xlabel={Mesh size $h$},
    xmax=1.5,
    ymin=1e-4, ymax=.5,
    legend pos=south east, legend cell align=left, legend style={font=\tiny},	
    xmajorgrids=true, ymajorgrids=true, grid style=dashed,
    legend entries={Lobatto, Gauss}%, Lobatto-Gauss}    
]
\pgfplotsset{
cycle list={{blue, mark=*}, {red, dashed ,mark=square*}}%,{black, dashdotted ,mark=triangle*}}
}
% N = 1
\addplot+[semithick, mark options={solid, fill=markercolor}]
coordinates{(1,0.225732)(0.5,0.175585)(0.25,0.0875643)(0.125,0.050868)};
\addplot+[semithick, mark options={solid, fill=markercolor}]
coordinates{(1,0.221903)(0.5,0.144126)(0.25,0.0724546)(0.125,0.0219034)};

% N = 2
\addplot+[semithick, mark options={solid, fill=markercolor}]
coordinates{(1,0.198065)(0.5,0.0693157)(0.25,0.024472)(0.125,0.00612038)};
\addplot+[semithick, mark options={solid, fill=markercolor}]
coordinates{(1,0.178736)(0.5,0.0575655)(0.25,0.016464)(0.125,0.0029831)};

% N = 3
\addplot+[semithick, mark options={solid, fill=markercolor}]
coordinates{(1,0.113556)(0.5,0.0428492)(0.25,0.00686179)(0.125,0.00146676)};
\addplot+[semithick, mark options={solid, fill=markercolor}]
coordinates{(1,0.0851151)(0.5,0.0386236)(0.25,0.00246054)(0.125,0.00027861)};

\node at (axis cs:2.4,3.25) {$N = 1$};
\node at (axis cs:2.4,1.75) {$N = 2$};
\node at (axis cs:2.4,.9) {$N = 3$};
\end{loglogaxis}
\end{tikzpicture}
}
\caption{$L^2$ and $L^\infty$ errors for entropy-stable Lobatto and Gauss collocation schemes on non-conforming Cartesian (affine) hexahedral meshes under $h$-refinement for $N=1,2,3$.  Note the different axis limits.}
\label{fig:3d_affine}
\end{figure}

%As in the 2D case, we also examine the effect of curvilinear mesh warping.  We construct a curved warping of the initial Cartesian mesh by mapping nodes on each hexahedron to warped nodal positions $(\tilde{x},\tilde{y},\tilde{z})$ through the transformation
%\begin{align*}
%\tilde{y} &= y + {\frac{1}{8}L_y\cos\LRp{3\pi \frac{(x-7.5)}{15}}\cos\LRp{\pi \frac{(y-10)}{20}}\cos\LRp{\pi \frac{(z-2.5)}{5}}}\\
%\tilde{x} &= x + {\frac{1}{8}L_x\cos\LRp{\pi \frac{(x-7.5)}{15}}\sin\LRp{4\pi \frac{(\tilde{y}-10)}{20}}\cos\LRp{\pi \frac{(z-2.5)}{5}}}\\
%\tilde{z} &= z + {\frac{1}{8}L_z\cos\LRp{\pi \frac{(\tilde{x}-7.5)}{15}}\cos\LRp{2\pi \frac{(\tilde{y}-10)}{20}}\cos\LRp{\pi \frac{(z-2.5)}{5}}}.
%\end{align*} 
%where $L_x = 15, L_y = 20, L_z = 5$.  

\section{Conclusions and extensions of the mortar-based approach}

We have introduced an entropy-stable mortar-based interface treatment which reduces the computational cost of entropy-stable high-order Lobatto and Gauss collocation DG schemes on non-conforming meshes.  Mortars are incorporated into entropy-stable DG schemes using mortar-based extensions of hybridized SBP operators introduced in \cite{chan2018discretely, chan2018efficient, chan2019skew}, and admit efficient face-local implementations. Moreover, in addition to extending Gauss collocation DG schemes to non-conforming mesh interfaces, the mortar-based entropy-stable formulations introduced in this work also provide a more efficient treatment of non-conforming interfaces for Lobatto collocation DG schemes based on a two-mortar setup. 

We conclude by discussing some extensions and generalizations of this mortar-based approach.  First, the extension to $p$ and $hp$ non-conforming meshes is straightforward.  As with $h$ non-conforming meshes, the mortar ndoes are taken to be the nodes on the refined side of a $hp$ non-conforming interfaces.  The two-layer mortar approach of Section~\ref{sec:anisotropic} can also be extended to $hp$ non-conforming interfaces in a straightforward fashion.  

This approach can also be generalized to non-hexahedral elements.  However, the mortar-based SBP operators in this paper require node alignment to expose more sparsity.  Unlike tensor product elements, simplicial elements, neither mortar nor surface nodes are ``aligned'' with volume nodes.  Thus, the mortar-based approach in this work is not expected to be more efficient than the direct use of hybridized SBP operators on non-tensor product elements \cite{shadpey2019energy}.

\section{Acknowledgments}

Jesse Chan gratefully acknowledges support from the National Science Foundation under awards DMS-1719818 and DMS-1712639.  Mario Bencomo is supported partially by the Rice University CAAM Department Pfieffer Postdoctoral Fellowship.  The authors also thank Florian Hindenlang for his careful reading of the initial manuscript.

\appendix

\section{Geometric terms for curved non-conforming meshes}
\label{app:A}

Entropy conservation (stability) requires that the geometric terms be constructed appropriately.  In addition to requiring satisfaction of the discrete geometric conservation law (GCL) \cite{kopriva2006metric, crean2018entropy, chan2018discretely}, the schemes derived in this work require that the polynomial degree of the geometric terms be related to the accuracy of the surface and mortar quadrature rules.  

As discussed in Section~\ref{sec:compare}, the standard construction of geometric terms (using a cross product formula \cite{kopriva2006metric, hesthaven2007nodal}) on 2D isoparametric curved elements are sufficient to ensure the GCL and thus entropy conservation (stability) for Lobatto and Gauss collocation schemes.  For 3D isoparametric curved hexahedra, the standard construction of geometric terms does not ensure satisfaction of the GCL.  We consider two alternative methods to construct geometric terms: Approach 1 from \cite{kopriva2006metric}, and Approach 2 from \cite{kozdon2018energy}.  

We note that entropy conservation (or stability) on non-conforming meshes also requires that the scaled outward normals are equal and opposite across each mortar face.  This holds if the mesh is \emph{watertight} or \emph{well-constructed}, such that there are no gaps between neighboring elements.  Several conditions which guarantee that a curved non-conforming mesh is well-constructed are described in \cite{kopriva2019free}.  For the two-dimensional numerical experiments in this work, we construct watertight curved non-conforming meshes by first curving a conforming mesh.  We locally refine to construct a non-conforming mesh, with the refined child elements inheriting the geometric mapping of the parent elements.  For the three-dimensional numerical experiments in this work, we adopt the following approach.

\subsection{Construction of geometric terms for curved hexahedra}

Let $\fnt{D}_{\rm 1D}$ denote the $(N_{\rm geo}+1) \times (N_{\rm geo}+1)$ one-dimensional nodal differentiation matrix at degree $N_{\rm geo}$ Lobatto nodes.  Multi-dimensional differentiation matrices on the reference hexahedron can then be constructed via
\begin{align*}
\fnt{D}_1 &= \fnt{D}_{\rm 1D} \otimes \fnt{I}_{\rm 1D}\otimes \fnt{I}_{\rm 1D}\\
\fnt{D}_2 &= \fnt{I}_{\rm 1D} \otimes \fnt{D}_{\rm 1D}\otimes \fnt{I}_{\rm 1D}\\
\fnt{D}_3 &= \fnt{I}_{\rm 1D} \otimes \fnt{I}_{\rm 1D}\otimes \fnt{D}_{\rm 1D},
\end{align*}
where $\fnt{I}_{\rm 1D}$ is the $(N_{\rm geo}+1) \times (N_{\rm geo}+1)$ identity matrix.  

Let $\fnt{x},\fnt{y},\fnt{z}$ denote the vectors of physical nodal coordinates of a curved degree $N_{\rm geo}$ element.  Approach 1 in \cite{kopriva2006metric} constructs scaled geometric terms $g_{ij} = J\pd{\hat{x}_j}{x_i}$ by computing the curl of geometric ``potentials'' $\fnt{f}_{ij}$
\begin{align*}
\fnt{g}_{i1} &= \alpha_i\LRp{\fnt{D}_3\fnt{f}_{i2} - \fnt{D}_2\fnt{f}_{i3}},\\
\fnt{g}_{i2} &=\alpha_i \LRp{\fnt{D}_1\fnt{f}_{i3} - \fnt{D}_3\fnt{f}_{i1}},\\
\fnt{g}_{i3} &= \alpha_i\LRp{\fnt{D}_2\fnt{f}_{i1} - \fnt{D}_1\fnt{f}_{i2}}
\end{align*}
where $\alpha_1 = 1$ and $\alpha_2 = \alpha_3 = -1$, and the potentials $\fnt{f}_{ij}$ are defined as 
\begin{align}
\fnt{f}_{1j} = \LRp{ \fnt{D}_j \fnt{y}} \circ \fnt{z}, \qquad 
\fnt{f}_{2j} = \LRp{ \fnt{D}_j \fnt{x}} \circ \fnt{z}, \qquad
\fnt{f}_{3j} = \LRp{ \fnt{D}_j \fnt{y}} \circ \fnt{x}.  \label{eq:geopot}
\end{align}
The resulting geometric terms are tensor product polynomials of degree $g_{ij} \in Q^{N_{\rm geo}}$.  We note that any geometric terms constructed by applying the discrete curl operator automatically satisfy the geometric conservation law.  

On non-conforming meshes, the resulting construction does not guarantee that the scaled normal vectors (\ref{eq:nJ_Gnhat}) are equal and opposite across faces \cite{kopriva2019free}.  This is remedied by enforcing continuity of the geometric potentials $\fnt{f}_{ij}$ across all non-conforming interfaces (for example, by interpolating values of $\fnt{f}_{ij}$ from the coarse side to the refined side).  

Approach 2 in \cite{kozdon2018energy} (see also \cite{chan2019skew}, Footnote 3) further modifies this approach by slightly by reducing the degree of each geometric potential along the corresponding coordinate direction.  Let $\fnt{I}_{N_{\rm geo}}^{N_{\rm geo}-1}$ denote the interpolation operator from degree $N_{\rm geo}$ to degree $N_{\rm geo}-1$ Lobatto nodes, and let $\fnt{I}_{N_{\rm geo}-1}^{N_{\rm geo}}$ denote the interpolation operator from degree $N_{\rm geo}-1$ to degree $N_{\rm geo}$ Lobatto nodes.  Define the degree reduction operator $\fnt{F}_{\rm 1D}$ as the product of these interpolation operators
\[
\fnt{F}_{\rm 1D} = \fnt{I}_{N_{\rm geo}-1}^{N_{\rm geo}}\fnt{I}_{N_{\rm geo}}^{N_{\rm geo}-1}.
\]
Multiplication by $\fnt{F}_{\rm 1D}$ lowers the degree of a polynomial from $N_{\rm geo}$ to $N_{\rm geo}-1$ while leaving the boundary values unchanged.  We can define multi-dimensional degree reduction operators 
\begin{align*}
\fnt{F}_1 &= \fnt{F}_{\rm 1D} \otimes \fnt{I}_{\rm 1D}\otimes \fnt{I}_{\rm 1D}\\
\fnt{F}_2 &= \fnt{I}_{\rm 1D} \otimes \fnt{F}_{\rm 1D}\otimes \fnt{I}_{\rm 1D}\\
\fnt{F}_3 &= \fnt{I}_{\rm 1D} \otimes \fnt{I}_{\rm 1D}\otimes \fnt{F}_{\rm 1D}.
\end{align*}
Suppose $u \in Q^{N_{\rm geo}}$ is represented using nodal values $\fnt{u}$; then, $\fnt{F}_1\fnt{u}$ corresponds to a polynomial in $Q^{N_{\rm geo}-1,N_{\rm geo},N_{\rm geo}}$.  In other words, $\fnt{F}_i$ lowers the polynomial degree in the $i$th coordinate by $1$.  Approach 2 computes the geometric terms $\fnt{g}_{ij}$ via
\begin{align*}
\fnt{g}_{i1} &= \alpha_i\LRp{\fnt{D}_3\tilde{\fnt{f}}_{i2} - \fnt{D}_2\tilde{\fnt{f}}_{i3}},\\
\fnt{g}_{i2} &=\alpha_i \LRp{\fnt{D}_1\tilde{\fnt{f}}_{i3} - \fnt{D}_3\tilde{\fnt{f}}_{i1}},\\
\fnt{g}_{i3} &= \alpha_i\LRp{\fnt{D}_2\tilde{\fnt{f}}_{i1} - \fnt{D}_1\tilde{\fnt{f}}_{i2}}
\end{align*}
where $\tilde{\fnt{f}}_{ij} = \fnt{F}_j \fnt{f}_{ij}$ are degree-reduced geometric potentials, and the original geometric potentials $\fnt{f}_{ij}$ are computed via (\ref{eq:geopot}).  Since multiplication by $\fnt{D}_i$ reduces the degree by $1$ in the $i$th coordinate by $1$, the resulting geometric terms $g_{ij}$ satisfy
\begin{align*}
&g_{i1} \in Q^{N_{\rm geo},N_{\rm geo}-1,N_{\rm geo}-1}\\
&g_{i2} \in Q^{N_{\rm geo}-1,N_{\rm geo},N_{\rm geo}-1}\\
&g_{i3} \in Q^{N_{\rm geo}-1,N_{\rm geo}-1,N_{\rm geo}}, \qquad i = 1,2,3.
\end{align*} 
The difference between Approach 1 and Approach 2 appears in Lemma~\ref{lemma:Qmprops_3d}.  For Lobatto nodes, the mortar-based operators $\fnt{Q}_{i,m}$ satisfy an SBP property (and can be used to construct entropy-stable formulations) for $N_{\rm geo} < N$ under Approach 1, while Approach 2 relaxes this condition to $N_{\rm geo} \leq N$.  

While Approach 1 is marginally more accurate than Approach 2 for $N > 1$, both approaches achieve nearly identical accuracy and convergence rates.  We analyze $L^2$ errors using Approach 1 and Approach 2 by  computing exact values of $g_{ij}$ using the cross product formula \cite{kopriva2006metric, hesthaven2007nodal}.   We construct a curved mesh by transforming a Cartesian hexahedral grid on $[-1,1]^3$ by interpolating the global curved mapping at Lobatto nodes
\begin{align*}
    \tilde{x} &= x + \frac{1}{4}\cos(x)\sin(y)\sin(z)\\
    \tilde{y} &= y + \frac{1}{4}\sin(x)\cos(y)\sin(z)\\
    \tilde{z} &= z + \frac{1}{4}\sin(x)\sin(y)\cos(z),
\end{align*}
where $x,y,z$ denote coordinates on the Cartesian grid and $\tilde{x},\tilde{y},\tilde{z}$ denote coordinates on the mapped curved domain.  Figure~\ref{fig:geo_err} shows the convergence of the $L^2$ error over all geometric terms under mesh refinement.  Both Approach 1 and Approach 2 achieve very similar errors for $N=1,\ldots, 4$, with Approach 1 achieving slightly lower errors for all $N > 1$.  Table~\ref{fig:geo_rates} shows computed convergence rates, which match the $O(h^{N+2})$ rates proven in \cite{chan2018discretely, crean2018entropy} for simplicial elements.
\begin{figure}
\centering
%\subfloat[Coarse $N=3$ non-conforming affine mesh]{\raisebox{2.6em}{\includegraphics[width=.475\textwidth]{mesh_affine.png}}}
\begin{tikzpicture}
\begin{loglogaxis}[
    width=.5\textwidth,
    xlabel={Mesh size $h$},
    ylabel={$L^2$ error}, 
    xmax=1.7,xmin=.055,
    ymin=2.5e-8, ymax=2.5,
    legend pos=south east, legend cell align=left, legend style={font=\tiny},	
    xmajorgrids=true, ymajorgrids=true, grid style=dashed,
    legend entries={Approach 1 \cite{kopriva2006metric}, Approach 2 \cite{kozdon2018energy}}%, Lobatto-Gauss}    
]
\pgfplotsset{
cycle list={{blue, mark=*}, {red, dashed ,mark=square*}}%,{black, dashdotted ,mark=triangle*}}
}
% N = 1
\addplot+[semithick, mark options={solid, fill=markercolor}]
coordinates{(1,0.576948)(0.5,0.147753)(0.25,0.0217505)(0.125,0.00284466)};
\addplot+[semithick, mark options={solid, fill=markercolor}]
coordinates{(1,0.812383)(0.5,0.207677)(0.25,0.0306257)(0.125,0.00401036)};

% N = 2
\addplot+[semithick, mark options={solid, fill=markercolor}]
coordinates{(1,1.14476)(0.5,0.0680173)(0.25,0.00414906)(0.125,0.000257077)};
\addplot+[semithick, mark options={solid, fill=markercolor}]
coordinates{(1,0.885454)(0.5,0.0505198)(0.25,0.0029414)(0.125,0.000177322)};

% N = 3
\addplot+[semithick, mark options={solid, fill=markercolor}]
coordinates{(1,0.178578)(0.5,0.00711309)(0.25,0.000232189)(0.125,7.33457e-06)};
\addplot+[semithick, mark options={solid, fill=markercolor}]
coordinates{(1,0.163661)(0.5,0.00592117)(0.25,0.000187325)(0.125,5.866e-06)};

% N = 4
\addplot+[semithick, mark options={solid, fill=markercolor}]
coordinates{(1,0.0419636)(0.5,0.000598623)(0.25,8.70891e-06)(0.125,1.31374e-07)};
\addplot+[semithick, mark options={solid, fill=markercolor}]
coordinates{(1,0.0380338)(0.5,0.000530832)(0.25,7.64254e-06)(0.125,1.14732e-07)};

\node at (axis cs:.085,3.5e-3) {$N = 1$};
\node at (axis cs:.085,2.2e-4) {$N = 2$};
\node at (axis cs:.085,6e-6) {$N = 3$};
\node at (axis cs:.085,1.25e-7) {$N = 4$};
\end{loglogaxis}
\end{tikzpicture}
\caption{$L^2$ errors under mesh refinement for Approach 1 \cite{kopriva2006metric} and Approach 2 \cite{kozdon2018energy}.}
\label{fig:geo_err}
\end{figure}
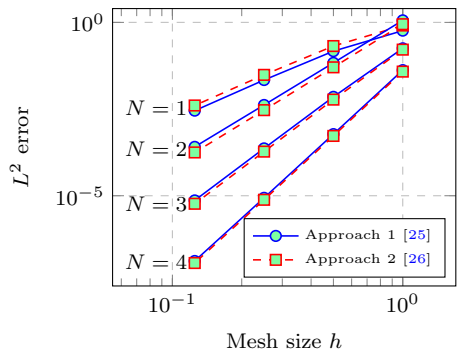

\begin{table}
\centering
\begin{tabular}{|c||c|c|c|c|}
\hline
$N$ & 1 & 2 & 3 & 4\\
\hline
Approach 1 &2.8472  & 4.0772 &4.9896 & 6.0879 \\
\hline
Approach 2 & 2.8494  & 4.0238 & 4.9608 & 6.0769  \\
\hline
\end{tabular}
\caption{Computed $L^2$ error convergence rates for Approach 1 \cite{kopriva2006metric} and Approach 2 \cite{kozdon2018energy}.  Both approaches achieve $O(h^{N+2})$ rates, which were proven in \cite{chan2018discretely, crean2018entropy} for simplicial elements.}
\label{fig:geo_rates}
\end{table}

\bibliographystyle{unsrt}
\bibliography{refs}

\end{document}